\def\titlerunning#1{\gdef\titrun{#1}}
\def\author#1{\gdef\autrun{\def\and{\unskip, }#1}\gdef\@author{#1}}
\def\keywords#1{\par\medskip
\noindent\textbf{Keywords.} #1}
\def\subjclass#1{\par\smallskip
\noindent\textbf{MSC (2010):} #1}
\newtheorem{thm}{Theorem}[section]
\newtheorem{cor}[thm]{Corollary}
\newtheorem{lem}[thm]{Lemma}
\newtheorem{prop}[thm]{Proposition}
\theoremstyle{definition}
\newtheorem{defin}[thm]{Definition}
\newtheorem{rem}[thm]{Remark}
\newtheorem{exa}[thm]{Example}
\numberwithin{equation}{section}
\newtheorem*{notations}{Notations}
\DeclareMathOperator*{\esssup}{ess\,sup}
\let\@fnsymbol\@alph
\begin{document}

\baselineskip=17pt

\titlerunning{Radial quasilinear problems}

\title{Existence results for a class of quasilinear Schr\"{o}dinger equations with singular or vanishing potentials
}

\author{Marino Badiale\thanks{Dipartimento di Matematica ``Giuseppe Peano'', Universit\`{a} degli Studi di
Torino, Via Carlo Alberto 10, 10123 Torino, Italy. 
e-mails: \texttt{marino.badiale@unito.it}, \texttt{michela.guida@unito.it}}
\textsuperscript{,}\thanks{Partially supported by the PRIN2012 grant ``Aspetti variazionali e
perturbativi nei problemi differenziali nonlineari''.}
\ -\ Michela Guida\textsuperscript{a}
\ -\ Sergio Rolando\thanks{Dipartimento di Matematica e Applicazioni, Universit\`{a} di Milano-Bicocca,
Via Roberto Cozzi 53, 20125 Milano, Italy. e-mail: \texttt{sergio.rolando@unito.it}}
}

\date{
}
\maketitle

\begin{abstract}
Given two continuous functions $V\left(r \right)\geq 0$ and $K\left(r\right)> 0$ ($r>0$), which may be singular or vanishing at zero as well as at infinity, we study the quasilinear elliptic equation
\begin{equation*}
-\Delta w+ V\left( \left| x\right| \right) w - w \left( \Delta w^2 \right)= K(|x|) g(w) \quad \text{in }\mathbb{R}^{N},  
\end{equation*}
\noindent where $N\geq3$. To study this problem we apply a change of variables $w=f(u)$, already used by several authors, and find existence results for nonnegative solutions by the application of variational methods. 
The main features of our results are that they do not require any compatibility between how the potentials $V$ and $K$ behave at the origin and at infinity, and that they essentially rely on power type estimates of the relative growth of $V$ and $K$, not of the potentials separately.
Our solutions satisfy a weak formulations of the above equation, but we are able to prove that they are in fact classical solutions in $\mathbb{R}^{N} \backslash \{  0\}$. 
To apply variational methods, we have to study the compactness of the embedding of a suitable
function space into the sum of Lebesgue spaces $L_{K}^{q_{1}}+L_{K}^{q_{2}}$, and thus into $L_{K}^{q}$ ($=L_{K}^{q}+L_{K}^{q}$) as a particular case. 
 The nonlinearity $g$ has a double-power behavior, whose standard example is $g(t) = \min \{ t^{q_1 -1}, t^{q_2 -1}  \}$, recovering the usual case of a single-power behavior when $q_1 = q_2$.

\keywords{Quasilinear elliptic PDEs, unbounded or decaying potentials, Orlicz-Sobolev spaces, compact embeddings}

\subjclass{Primary 35J62, 46E35; Secondary 35J20, 46E30}

\end{abstract}

\section{Introduction}

In the present paper, we study the following quasilinear elliptic equation
\begin{equation}\label{EQ}
-\Delta w+ V\left( \left| x\right| \right) w - w \left( \Delta w^2 \right)= K(|x|) g(w)  \quad \text{in }\mathbb{R}^{N}
\end{equation}
where $N\geq3$, $V\geq 0$ and $K> 0$ are given potentials, and $g:\mathbb{R}\rightarrow\mathbb{R}$ is a continuous nonlinearity such that $g(0)=0$. Searching for standing waves solutions, this equation derives from an evolution Schr\"odinger equation which has been used to study several physical phenomena (see \cite{LiuWang,PoppenbergSchmittWang, Kwon} and the references therein), such as laser beams in matter \cite{Brandi-et} and quasi-solitons in superfluids films \cite{Kuri}.

It is not easy to apply variational methods to study (\ref{EQ}), because the (formally) associated functional  presents unusual integral terms, like $\int_\mathbb{R^N} w^2 |\nabla w |^2 dx$. In recent times, a great amount of work has been made on equation (\ref{EQ}) and several techniques have been introduced to overcome these difficulties (see  \cite{AiresSouto,ColinJeanjean,OMiyagakiSoares,Uberlandio1,Uberlandio3,Uberlandio2,
Uberlandio4,FangSzulkin,FurtadoSilvaSilva,Gloss,LiuLiuWang1,LiuLiuWang2,LiuWangWang1,LiuWangWang2, RuizSiciliano, SilvaVieira,YangDing,YangWangZhao,Li-Huang,Kwon} and the references therein). In this paper, following an idea introduced in \cite{LiuWang}, we exploit a change of variable $w=f(u)$ where $f$ satisfies a suitable ordinary differential equation (see Section 2). The problem in the new unknown $u$ can be faced with usual variational methods, working in an Orlicz-Sobolev space. This idea has been used in \cite{Uberlandio1, Uberlandio2, Uberlandio4, Kwon}, among others. 

In almost all the papers dealing with (\ref{EQ}), the potential $V$ (be it radial or nonradial) is supposed to be positive and bounded away from zero at infinity. At the best of our knowledge, the only papers dealing with a potential $V$ allowed to vanish at infinity are 
\cite{AiresSouto,Uberlandio2,Kwon,Li-Huang} (see also \cite{Uberlandio4} for equation \eqref{EQ} in presence of a parameter). 
In \cite{AiresSouto} and \cite{Kwon}, the authors respectively prove existence and nonexistence results assuming that $V$ is bounded.  
In \cite{Li-Huang}, existence of solutions is obtained for possibily singular $V$'s but bounded $K$'s.
In \cite{Uberlandio2}, which is the paper that inspired our work, both $V$ and $K$ can be singular or vanishing at zero or at infinity, and the authors prove existence of solution assuming that the potentials are radial and essentially behave as powers of $|x|$ as $|x|\to 0$ and $|x|\to \infty$ (see the paper introduction for more precise assumptions). 

Here we study equation (\ref{EQ}) via the change of variable $w=f(u)$ in the case in which both $V$ and $K$ are radial potentials that may be singular or vanishing at zero as well as at infinity.  
This implies that, even in the new variational setting brought in by the variable change, the usual embeddings theorems for Sobolev spaces are not available, and new embedding theorems need to be proved. We observe that, for semilinear and $p$-laplacian elliptic equations, this has been done in several papers: see e.g. the references in 
\cite{BGR_I,BGR_II,GR-nls} for a bibliography concerning the usual Laplace equation, \cite{Anoop,Su12,Cai-Su-Sun,SuTian12,Su-Wang-Will-p,Yang-Zhang,Zhang13,BPR,BGR_p}
for equations involving the $p$-laplacian, and \cite{BZ, Su-Wang} for problems with a potential $A$ on the derivatives (see also \cite{BGR_bilap} for biharmonic equations).

The main novelty in our approach (with respect to the previous literature, and especially to \cite{Uberlandio2}) is two-folded. 
First, we look for embeddings of a suitable function space not into a single (weighted) Lebesgue space $L_{K}^{q}$ but into a sum of Lebesgue spaces $L_{K}^{q_1}+L_{K}^{q_2}$. This allows to study separately the behaviour of the potentials $V$ and $K$ at $0$ and $\infty$, assuming independent sets of hypotheses about these behaviours. 
Second, we assume hypotheses not on $V$ and $K$ separately but on their ratio, so admitting asymptotic behaviors of general kind for the two potentials, not only power-like
(cf. Section 8).

As a conclusion, our approach shows that, in order to have solutions, the potentials $V$ and $K$ can have independent behaviours at zero and at infinity, no needing to satisfy compatibility conditions between such behaviours. Moreover, what does really count are not the growths of $V$ and $K$ separately, but only how they grow (or decay) relatively to one another.

The paper is organized as follows. In Section 2 we introduce our hypotheses on $V$ and $K$, the change of variables $w=f(u)$ and the main function spaces $X$ and $E$ we will work in. 
In Section \ref{COMP} we state a general result concerning the embedding properties of $E$  into $L_{K}^{q_{1}}+L_{K}^{q_{2}}$ (Theorem \ref{THM(cpt)})
and some explicit conditions ensuring that the embedding is compact
(Theorems \ref{THM0} and \ref{THM1}). The general
result is proved in Section \ref{SEC:1}, the explicit conditions in Section \ref{SEC:2}. 
In Section 6 we introduce our hypotheses on the nonlinearity $g$, we study the main properties of the functional $I$ associated to the dual problem and of its critical points, which give rise to solutions to (\ref{EQ}). In Section \ref{SEC: ex} we apply our embedding results to get existence of non negative solutions for equation (\ref{EQ}), stating and proving our main existence result, which is Theorem \ref{THM:ex}. Section \ref{SEC:EX} is devoted to concrete examples of potentials $V$ and $K$ satisfying our hypotheses, though escaping the previous literature.

\begin{notations}

We end this introductory section by collecting
some notations used in the paper.

\noindent  $\bullet $ $\mathbb{R}_{+} = ( 0, +\infty ) = \left\{ x\in \mathbb{R} : x>0 \right\}$.

\noindent $\bullet $ For every $R>0$, we set $B_{R} =\left\{ x\in \mathbb{R}
^{N}:\left| x\right| <R\right\} $.


\noindent $\bullet $ For any subset $A\subseteq \mathbb{R}^{N}$, we denote $
A^{c}:=\mathbb{R}^{N}\setminus A$. If $A$ is Lebesgue measurable, $\left|
A\right| $ stands for its measure.

\noindent $\bullet $ $\hookrightarrow $ denotes \emph{continuous} embeddings.

\noindent $\bullet $ If $Y$ is a Banach space, $Y'$ is its dual.

\noindent $\bullet $ $C_{\mathrm{c}}^{\infty }(\Omega )$ is the space of the
infinitely differentiable real functions with compact support in the open
set $\Omega \subseteq \mathbb{R}^{N}$. If $\Omega$ has radial symmetry, $C_{\mathrm{c}, r}^{\infty }( \Omega )$ is the subspace 
of $C_{\mathrm{c}}^{\infty }(\Omega )$ made of radial functions.

\noindent $\bullet $ For any measurable set $A\subseteq \mathbb{R}^{N}$, $
L^{q}(A)$ and $L_{\mathrm{loc}}^{q}(A)$ are the usual real Lebesgue spaces.
If $\rho :A\rightarrow \Bbb{R}_{+}$ is a measurable function, then $%
L^{p}(A,\rho \left( z\right) dz)$ is the real Lebesgue space with respect to
the measure $\rho \left( z\right) dz$ ($dz$ stands for the Lebesgue measure
on $\mathbb{R}^{N}$). In particular, if $K:\Bbb{R}_{+}\rightarrow \Bbb{R}
_{+} $ is measurable, we denote $L_{K}^{q}\left( A\right) :=L^{q}\left(
A,K\left( \left| x\right| \right) dx\right) $.

\noindent $\bullet $ For $N\geq 3$, $2^{* }= \frac{2N}{N-2}$ is the critical exponent of Sobolev embeddings.

\end{notations}

\section{Hypotheses and preliminary results} \label{SEC: hp}

\par \noindent Throughout this paper, we assume $N\geq 3$ and the following hypothesis $\left( \mathbf{H}\right) $ on $V,K$:

\begin{itemize}

\item[$\left( \mathbf{H}\right) $]  $V:\mathbb{R}_{+}\rightarrow
\left[ 0,+\infty \right) $ and $K:\mathbb{R}_{+} \rightarrow \mathbb{R}_{+} $ are continuous, and there is $C>0$ such that for all $r \in (0,1)$ one has
$$
V(r) \leq \frac{C}{r^2} .
$$
\end{itemize}

We begin by introducing the function $f$ we need to define the Orlicz-Sobolev space in which we will work. This function is defined as the solution of the following Cauchy problem:
\begin{equation}
\label{eq:change}
\left\{ 
\begin{array}{ll}
f'(t)= \frac{1}{\sqrt{1+2f(t)^2}} \quad \quad  \text{in }\mathbb{R} \\ 

f(0)=0    

\end{array}
\right. 
\end{equation}

\noindent The following lemma gives the main properties of the solution of (\ref{eq:change}). For the proofs see \cite{Uberlandio1, Uberlandio2}.

\begin{lem}
\label{change2}
There is a unique solution $f\in C^{\infty}(\mathbb{R}, \mathbb{R})$ of (\ref{eq:change}). Such a solution is odd, strictly increasing, and surjective (hence invertible). Moreover, it satisifes the following properties:
%
%
%
\begin{itemize}

\item[(1)] $|f'(t)|\leq 1 $ for all $t \in \mathbb{R}$;

\item[(2)] $|f(t)|\leq |t| $ for all $t \in \mathbb{R}$;

\item[(3)] $f(t)/t \rightarrow 1  $ as $t \rightarrow 0$;

\item[(4)] $f(t)/\sqrt{t} \rightarrow 2^{1/4}  $ as $t \rightarrow +\infty$;

\item[(5)] $f(t)/2\leq t f'(t) \leq f(t)  \,$ for all $t \geq 0$;

\item[(6)] $|f(t)|\leq  2^{1/4} \sqrt{|t|} $ for all $t \in \mathbb{R}$;

\item[(7)] There is a constant $C_1 >0$ such that 
$$|f(t)| \geq C_1 |t| \quad\text{if} \, \, |t|\leq 1; \quad \quad |f(t)| \geq C_1 \sqrt{|t|} \quad\text{if} \, \, |t|\geq 1 ;$$ 

\item[(8)] There are two positive constants $c_1 , c_2$ such that $|t| \leq c_1 |f(t)| + c_2 f(t)^2$ for all $t \in \mathbb{R}$;

\item[(9)] $|f(t) f'(t)|\leq  \frac{1}{\sqrt{2}} $ for all $t \in \mathbb{R}$;

\item[(10)] The function $f(t)^2$ is strictly convex;

\item[(11)] There is a constant $C>0$ such that $f(2t)^2 \leq C f(t)^2$ for all $t \in \mathbb{R}$.

\end{itemize}                                  

\end{lem}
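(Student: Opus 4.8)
The plan is to regard the Cauchy problem \eqref{eq:change} as an autonomous ODE $f'=g(f)$ with $g(s):=(1+2s^2)^{-1/2}$. Since $g$ is smooth, bounded, and globally Lipschitz on $\mathbb{R}$, Picard--Lindel\"of yields a unique local solution, and the bound $|f'|=g(f)\le 1$ rules out finite-time blow-up, giving a unique global solution; bootstrapping $f'=g(f)$ then shows $f\in C^\infty$. Oddness follows from uniqueness, since $t\mapsto -f(-t)$ solves the same problem ($g$ being even); strict monotonicity is immediate from $f'=g(f)>0$; and for surjectivity, as $f$ is odd and increasing it suffices to show $f(t)\to+\infty$, which follows because a bound $f\le M$ would force $f'\ge g(M)>0$ and hence $f(t)\ge g(M)\,t\to\infty$, a contradiction. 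The single most useful device for the quantitative estimates is the separated-variables identity obtained by writing $\sqrt{1+2f^2}\,f'=1$ and integrating from $0$,
\[
\int_0^{f(t)}\sqrt{1+2s^2}\,ds=t\qquad(t\ge 0),
\]
which, together with oddness, reduces nearly every inequality to an elementary estimate on the integrand.

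The algebraic bounds come out quickly. Property (1) is $|f'|=(1+2f^2)^{-1/2}\le 1$; property (2) follows by integrating $f'\le 1$; property (3) is $f(t)/t\to f'(0)=g(0)=1$; and (9) reduces to $|ff'|=|f|(1+2f^2)^{-1/2}=(f^{-2}+2)^{-1/2}\le 2^{-1/2}$. For (6) and (8) I would use the integral identity: bounding the integrand below by $\sqrt{2}\,s$ gives $t\ge f^2/\sqrt{2}$, hence $|f|\le 2^{1/4}\sqrt{|t|}$ (property (6)), while bounding it above by $1+\sqrt{2}\,s$ gives $t\le |f|+f^2/\sqrt{2}$, which is exactly (8) with $c_1=1$, $c_2=1/\sqrt{2}$.

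Properties (5), (7) and (10) rely on sign analysis of derivatives. Differentiating once more gives $f''=-2ff'(1+2f^2)^{-3/2}$, so $f''\le 0$ for $t\ge 0$; then $h:=f-tf'$ satisfies $h(0)=0$ and $h'=-tf''\ge 0$, yielding $tf'\le f$, the upper half of (5). For the lower half I would set $F(f):=2\int_0^f\sqrt{1+2s^2}\,ds-f\sqrt{1+2f^2}$ and compute $F'(f)=(1+2f^2)^{-1/2}>0$ with $F(0)=0$, which via the integral identity is equivalent to $tf'\ge f/2$. Property (10) is a direct computation, $(f^2)''=2(f')^2+2ff''=2(1+2f^2)^{-2}>0$. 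For (7), on $[0,1]$ monotonicity gives $f'(t)\ge(1+2f(1)^2)^{-1/2}=:C_1$, hence $f(t)\ge C_1 t$; for $t\ge 1$ the upper bound $t\le|f|+f^2/\sqrt{2}\le Cf^2$ (valid once $f$ is bounded away from $0$) inverts to $|f|\ge C_1\sqrt{t}$ after adjusting the constant.

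The genuinely delicate point, and the one I expect to be the main obstacle, is the sharp asymptotic (4), from which (11) will follow. As $t\to+\infty$ one has $f(t)\to+\infty$, and the integral identity gives $t=\int_0^{f}\sqrt{1+2s^2}\,ds$. Using the large-$s$ expansion $\sqrt{1+2s^2}=\sqrt{2}\,s\,(1+O(s^{-2}))$ and controlling the lower-order contribution uniformly (the explicit integrand makes this routine) yields $t=\tfrac{f^2}{\sqrt{2}}(1+o(1))$, whence $f(t)/\sqrt{t}\to 2^{1/4}$. Finally, (11) follows by a compactness argument: the ratio $R(t):=f(2t)^2/f(t)^2$ is continuous and positive on $(0,+\infty)$, and by (3) and (4) it satisfies $R(t)\to 4$ as $t\to 0^+$ and $R(t)\to 2$ as $t\to+\infty$, so $R$ extends continuously to the compact set $[0,+\infty]$ and is therefore bounded, giving the doubling constant $C$.
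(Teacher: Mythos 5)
Your proof is correct, but note that the paper does not actually prove this lemma: its ``proof'' is a pointer to the literature (the do \'O--Severo papers cited as \cite{Uberlandio1,Uberlandio2}), so your self-contained argument is by construction a different route. What you do differently is to organize everything around the separated-variables identity $\int_0^{f(t)}\sqrt{1+2s^2}\,ds=t$: this dispatches (6) and (8) in one line each, and your auxiliary function for the lower half of (5) checks out, since
\[
F'(f)=\frac{2\left(1+2f^2\right)-\left(1+4f^2\right)}{\sqrt{1+2f^2}}=\frac{1}{\sqrt{1+2f^2}}>0 ,
\]
so $F(f(t))\ge 0$ is exactly $2t\ge f(t)/f'(t)$. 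The cited references instead work directly from the ODE; for instance (4) is usually obtained by L'H\^opital, $\lim_{t\to+\infty}f(t)^2/t=\lim_{t\to+\infty}2f(t)f'(t)=\lim_{t\to+\infty}2f/\sqrt{1+2f^2}=\sqrt{2}$, which is slightly slicker than your expansion of the integral, though your version is also sound: $\int_0^F\bigl(\sqrt{1+2s^2}-\sqrt{2}\,s\bigr)ds=\int_0^F\bigl(\sqrt{1+2s^2}+\sqrt{2}\,s\bigr)^{-1}ds=O(\log F)$, hence $t=f(t)^2/\sqrt{2}+O(\log f(t))$. Two small points to patch, neither a real gap: property (11) is stated for all $t\in\mathbb{R}$, so after the compactness argument on $(0,+\infty)$ you should invoke evenness of $f^2$ (from oddness of $f$) to cover $t\le 0$, with $t=0$ trivial; and in (9) the rewriting $(f^{-2}+2)^{-1/2}$ presupposes $f\neq 0$, so treat $f=0$ separately. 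What your approach buys is a complete, checkable proof inside the paper, with the integral identity acting as a single unifying device; what the paper's citation approach buys is brevity, at the cost of sending the reader elsewhere for arguments that, as you show, fit in a page.
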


We now use the function $f$ to define a change of unknown: we call $w$ the solution of (\ref{EQ}) that we are looking for and we set $w= f(u)$, where $u$ is the new unknown, living in a suitable space that we are going to define. In this way, to get solutions $w$ to (\ref{EQ}) we will look for solutions $u$ to the equation
\begin{equation}\label{EQdual}
-\Delta u+ V\left( \left| x\right| \right) f(u) f' (u) = K(|x|) g(f(u)) f' (u) \quad \text{in }\mathbb{R}^{N},  
\end{equation}
which will be obtained as critical points of the following functional:
 \begin{equation}\label{funct1}
I(u) = \frac{1}{2}\int_{\mathbb{R}^{N}}|\nabla u |^2 dx + \frac{1}{2}\int_{\mathbb{R}^{N}}V(|x|) f(u)^2  dx-\int_{\mathbb{R}^{N}}K(|x|) G(f (u)) \,dx 
\end{equation}
The critical points of $I$ and their relations with solutions of (\ref{EQ}) will be studied in Section \ref{SEC: dual}, 
my means of the following hypotheses on the nonlinearity: $g: {\mathbb{R}} \rightarrow {\mathbb{R}}$ is a continuous function satisfying

\begin{itemize}

\item[${\bf \left( g_{1}\right) } $]  $\exists \theta >2$ such that $0\leq 2\theta G\left( t\right) \leq g\left( t\right) t$ for all $t\in \mathbb{R}$;

\item[${\bf  \left( g_{2}\right)} $]  $\exists t_{0}>0$ such that $G\left(t_{0}\right) >0$, where $G(t)= \int_{0}^{t} g(s)ds $;

\item[$  \left( {\bf g}_{q_{1},q_{2}}\right) $]  there exits a constant $C>0$ such that
$\left|g\left( t\right) \right| \leq C \min \left\{ \left| t\right|^{q_{1}-1},\left| t\right| ^{q_{2}-1}\right\} $ for all $t\in \mathbb{R}$.

\end{itemize}

\noindent We notice that these hypotheses imply $q_1, q_2 \geq 2\theta $. We also observe that, if $q_{1}\neq q_{2}$, the double-power growth
condition $  \left( {\bf g}_{q_{1},q_{2}}\right) $ is more stringent than the more
usual single-power one, since it implies $|g(t)| \leq C |t|^{q-1}$
for $q=q_{1}$, $q=q_{2}$ and every $q$
in between. On the other hand, we will never require $q_{1}\neq q_{2}$ in $  \left( {\bf g}_{q_{1},q_{2}}\right) $, so that our results will also concern
single-power nonlinearities as long as we can take $q_{1}=q_{2}$.

In this section and in the following ones, we introduce the function space $E$ in which we will obtain critical points of $I$ and we study the relevant compactness results for $E$.

First, we introduce the space $D_r^{1,2} \left(\mathbb{R}^{N}  \right) $, which is the closure of $C_{\mathrm{c}, r}^{\infty }( \mathbb{R}^{N} ) $ with respect to the norm $||u||_{1,2} := \left( \int_{\mathbb{R}^{N}} |\nabla u|^2 
dx \right)^{1/2}$. It is well known that $D_r^{1,2} \left(\mathbb{R}^{N}  \right) $is a Hilbert space. Then we define a second Hilbert space
$$X := \left\{ u \in  D_r^{1,2} \left(\mathbb{R}^{N}\right) \, \Big| \, \int_{\mathbb{R}^{N}} V(|x|) |u|^2 dx <+\infty \right\}$$
endowed with the norm $||u|| := \left( ||u||_{1,2}^{2} + ||u||_{L^{2}(\mathbb{R}^{N}, V (|x|) dx )}^{2} \right)^{1/2}$. Finally we  introduce the main function space that we will use, which is
$$E := \left\{ u \in  D_r^{1,2} \left(\mathbb{R}^{N}\right) \, \Big| \, \int_{\mathbb{R}^{N}} V(|x|) f(u)^2 dx <+\infty \right\}.$$
In $E$, we first define the norm
$$
||u||_o := \inf_{k>0} \frac{1}{k} \left[ 1+  \int_{\mathbb{R}^{N}} V(|x|) f( k u)^2 dx    \right] ,
$$
which is an Orlicz norm. Then we introduce the norm
$$||u || := ||u||_{1,2} + ||u||_o .$$
The space $E$, endowed with the norm $|| . ||$, is an Orlicz-Sobolev space. In the results, we recall its main properties.

\begin{thm}
$\left( E, ||.||\right)$ is a Banach space and the following continuous embedding holds:
$$
E\hookrightarrow D_r^{1,2} \left(\mathbb{R}^{N}\right) .
$$
\end{thm}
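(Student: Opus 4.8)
The plan is to dispatch the continuous embedding immediately and to concentrate all the work on completeness. By definition $E\subseteq D_r^{1,2}(\mathbb{R}^N)$, and since $\|u\|_{1,2}\le\|u\|_{1,2}+\|u\|_o=\|u\|$, the inclusion is $1$-Lipschitz, hence the embedding $E\hookrightarrow D_r^{1,2}(\mathbb{R}^N)$ is continuous. Before proving completeness I would check that $\|\cdot\|$ really is a norm. Writing $\rho(u):=\int_{\mathbb{R}^N}V(|x|)f(u)^2\,dx$ for the modular, the functional $\|\cdot\|_o$ is the Amemiya form of an Orlicz seminorm: its homogeneity comes from the substitution $j=k|\lambda|$ together with the oddness of $f$, and its subadditivity from the convexity of $t\mapsto f(t)^2$ (property (10)). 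Indeed, given $k_1,k_2$ nearly optimal for $u$ and $v$, the choice $k=k_1k_2/(k_1+k_2)$ exhibits $k(u+v)$ as a convex combination of $k_1u$ and $k_2v$, so convexity of $\rho$ gives $\|u+v\|_o\le\|u\|_o+\|v\|_o$. Since $u\in E$ forces $\rho(u)<\infty$, hence $\|u\|_o\le1+\rho(u)<\infty$ (take $k=1$), $\|\cdot\|_o$ is a finite seminorm on $E$; adding the genuine norm $\|\cdot\|_{1,2}$ makes $\|\cdot\|$ a norm, because $\|u\|=0$ forces $\|u\|_{1,2}=0$ and thus $u=0$.

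The heart of the matter is a modular--norm equivalence distilled from properties (10) and (11). From convexity and $f(0)=0$ one has $f(\lambda s)^2\le\lambda f(s)^2$ for $0<\lambda\le1$, whence $\rho(ku)\ge k\rho(u)$ for $k\ge1$; feeding this into the infimum defining $\|\cdot\|_o$ gives $\|u\|_o\ge\min\{\rho(u),1\}$ and therefore the implication: if $\|u\|_o<1$ then $\rho(u)\le\|u\|_o$. On the other hand, the $\Delta_2$-type bound $f(2t)^2\le Cf(t)^2$ of property (11), iterated and combined with the monotonicity of $f(\cdot)^2$ in $|t|$, yields $\rho(\lambda u)\le C^{\,j}\rho(u)$ whenever $1\le\lambda\le 2^{j}$. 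Together these produce the equivalence $\|v_n\|_o\to0\iff\rho(v_n)\to0$: the forward direction is immediate from the displayed implication, and for the converse one takes $k=1/\varepsilon$ in the infimum and controls $\rho(v_n/\varepsilon)\le C^{\,j}\rho(v_n)\to0$.

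With these tools completeness is straightforward. Let $(u_n)$ be Cauchy in $(E,\|\cdot\|)$. Since $\|u_n-u_m\|_{1,2}\le\|u_n-u_m\|$ and $D_r^{1,2}(\mathbb{R}^N)$ is complete, $u_n\to u$ in $D_r^{1,2}(\mathbb{R}^N)$ for some $u$, and by the Sobolev embedding $D_r^{1,2}\hookrightarrow L^{2^*}$ I may pass to a subsequence with $u_n\to u$ almost everywhere. The sequence is bounded, say $\|u_n\|_o\le M$, so $\|u_n/(2M)\|_o\le1/2$ gives $\rho(u_n/(2M))\le1/2$, and the $\Delta_2$ bound upgrades this to $\sup_n\rho(u_n)<\infty$; Fatou's lemma, using the continuity of $V$ and $f$, then gives $\rho(u)\le\liminf_n\rho(u_n)<\infty$, so $u\in E$. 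To obtain $\|u_n-u\|_o\to0$ I would show $\rho(u_n-u)\to0$: the forward direction of the equivalence turns the $\|\cdot\|_o$-Cauchy condition into $\rho(u_n-u_m)\to0$, and then, fixing $n$ large and letting $m\to\infty$ along the almost-everywhere convergent subsequence, Fatou gives $\rho(u_n-u)\le\liminf_m\rho(u_n-u_m)\le\varepsilon$. Hence $\rho(u_n-u)\to0$, so $\|u_n-u\|_o\to0$ by the converse direction, and since also $\|u_n-u\|_{1,2}\to0$ we conclude $\|u_n-u\|\to0$.

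The routine parts are the embedding and the seminorm axioms; the genuine obstacle is the completeness argument, and within it the passage between modular and norm convergence. This passage rests entirely on the $\Delta_2$-type estimate (property (11)): without it, norm convergence and modular convergence could disagree, Fatou alone would not place the $D_r^{1,2}$-limit back inside $E$, and one could not transfer the $\|\cdot\|_o$-Cauchy condition into a modular statement controllable by almost-everywhere convergence. I would therefore take particular care that every use of property (11) only ever enlarges the argument of $f(\cdot)^2$ by a bounded factor, so that the constants $C^{\,j}$ stay finite and uniform in $n$.
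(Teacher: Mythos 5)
Your proposal is correct and is, in substance, exactly the argument the paper appeals to: the paper's own proof of this theorem consists of citing the general theory of Orlicz spaces together with properties (10) and (11) of Lemma 2.1 (deferring details to the reference of Severo and de Carvalho), and those two properties --- convexity of $f(t)^2$ and the $\Delta_2$-type bound $f(2t)^2\le Cf(t)^2$ --- are precisely the engines of your Amemiya-norm axioms, modular--norm equivalence, and Fatou/completeness argument. Your write-up simply supplies in full the details that the paper's citation suppresses, with no step that the cited theory would carry out differently.
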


\begin{proof}
The fact that $E$ is a Banach space derives from the general theory of Orlicz spaces, together with the properties of the function $f$ stated in the above lemma, in particular $(10)$ and $(11)$ (see \cite{Uberlandio2}). The embedding is obvious from the definitions of $E$ and its norm. 
\end{proof}

\begin{cor}
\label{COR:est}
There are constants $S_N , C_N >0$ (only depending on $N$) such that for all $u \in E$ it holds:
$$
\left( \int_{\mathbb{R}^{N}}  |u|^{2^*} dx \right)^{1/2^*}\leq S_N \, ||u||, \quad \quad |u(x) |\leq C_N \frac{||u||}{|x|^{\frac{N-2}{2}}} \quad a.e. \, x \in \mathbb{R}^{N}.
$$
\end{cor}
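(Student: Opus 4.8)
The plan is to observe that both estimates concern only the $D_r^{1,2}$-structure of $u$, so by the continuous embedding $E\hookrightarrow D_r^{1,2}(\mathbb{R}^N)$ just proved — which, from the definition $\|u\|=\|u\|_{1,2}+\|u\|_o$, gives $\|u\|_{1,2}\leq\|u\|$ outright — it suffices to establish the two inequalities with $\|u\|_{1,2}$ in place of $\|u\|$. Since every element of $D_r^{1,2}(\mathbb{R}^N)$ is by definition a $\|\cdot\|_{1,2}$-limit of functions in $C_{\mathrm{c},r}^\infty(\mathbb{R}^N)$, I would first prove both bounds for $u\in C_{\mathrm{c},r}^\infty(\mathbb{R}^N)$ and then pass to the limit. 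The first inequality is nothing but the classical Sobolev embedding $D^{1,2}(\mathbb{R}^N)\hookrightarrow L^{2^*}(\mathbb{R}^N)$, whose sharp constant $S_N$ depends only on $N$; this requires no radial information and immediately gives $\left(\int_{\mathbb{R}^N}|u|^{2^*}dx\right)^{1/2^*}\leq S_N\|u\|_{1,2}\leq S_N\|u\|$.

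For the second, pointwise, inequality I would use the radial decay lemma. Writing $u=u(r)$ with $r=|x|$ for a radial $u\in C_{\mathrm{c},r}^\infty(\mathbb{R}^N)$, the fundamental theorem of calculus gives $u(r)=-\int_r^{\infty}u'(s)\,ds$, whence by the Cauchy--Schwarz inequality against the weight $s^{N-1}$,
\[
|u(r)|\leq\left(\int_r^{\infty}|u'(s)|^2 s^{N-1}\,ds\right)^{1/2}\left(\int_r^{\infty}s^{-(N-1)}\,ds\right)^{1/2}.
\]
Since $N\geq3$, the second factor converges and equals $(N-2)^{-1/2}r^{-(N-2)/2}$, while the first factor is bounded by $\omega_{N-1}^{-1/2}\|u\|_{1,2}$, where $\omega_{N-1}$ is the surface measure of the unit sphere and one uses $\|u\|_{1,2}^2=\omega_{N-1}\int_0^{\infty}|u'(s)|^2 s^{N-1}\,ds$. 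This yields the claim for smooth $u$ with $C_N=(\omega_{N-1}(N-2))^{-1/2}$.

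The last step, and the only genuinely delicate point, is the density argument that upgrades these bounds from $C_{\mathrm{c},r}^\infty(\mathbb{R}^N)$ to all of $E$ and accounts for the \emph{a.e.}\ qualifier in the pointwise estimate. Given $u\in E$, I would choose $u_n\in C_{\mathrm{c},r}^\infty(\mathbb{R}^N)$ with $u_n\to u$ in $D_r^{1,2}(\mathbb{R}^N)$; the $L^{2^*}$ bound then passes to the limit by lower semicontinuity of the norm. For the pointwise estimate, the uniform control $|u_n(x)|\leq C_N\|u_n\|_{1,2}|x|^{-(N-2)/2}$ combined with convergence in $L^{2^*}$ — hence, along a subsequence, almost everywhere — forces the same inequality to hold a.e.\ for the limit, once $u$ is identified with the representative obtained as this pointwise limit. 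I expect the bookkeeping of representatives and the selection of an a.e.-convergent subsequence to be the main thing to handle with care, the analytic content of both estimates being otherwise standard.
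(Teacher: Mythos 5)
Your proposal is correct and follows exactly the route the paper intends: the paper's proof simply invokes these as ``well known properties of any $u \in D_r^{1,2}(\mathbb{R}^{N})$'' together with $\|u\|_{1,2}\leq \|u\|$, and what you have done is supply the standard proofs of those properties (classical Sobolev embedding, the radial decay lemma via the fundamental theorem of calculus and Cauchy--Schwarz, and the density/representative bookkeeping). All steps check out, including the constant $C_N=(\omega_{N-1}(N-2))^{-1/2}$.
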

\begin{proof}
These are well known properties of any $u \in D_r^{1,2} \left(\mathbb{R}^{N}\right) $.
\end{proof}

\begin{lem}
\label{lem:properties}

\begin{itemize}

\item[(1)]

There exists $C>0 $ such that for all $u \in E$ one has
$$\frac{\int_{\mathbb{R}^{N}} V(|x|) f(u)^2  dx }{1 + \left( \int_{\mathbb{R}^{N}} V(|x|) f( u)^2  dx \right)^{1/2}}\leq C \, ||u||.$$

\item[(2)] If $u_n \rightarrow u$ in $E$, then
$$
\int_{\mathbb{R}^{N}} V(|x|) \left| f(u_n )^2  - f(u)^2  \right| \, dx \rightarrow 0
\quad
\text{and}
\quad
\int_{\mathbb{R}^{N}} V(|x|) \left| f (u_n ) - f (u) \right|^2 \, dx \rightarrow 0.
$$

\item[(3)] If $u_n (x) \rightarrow u(x)$ a.e. in $\mathbb{R}^{N}$ and 
$$
\int_{\mathbb{R}^{N}} V(|x|) f(u_n )^2  \, dx \rightarrow \int_{\mathbb{R}^{N}} V(|x|)  f(u )^2  \, dx 
$$
then $||u_n - u  ||_o  \rightarrow 0$.

\end{itemize}

\end{lem}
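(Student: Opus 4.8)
The plan is to handle the three assertions in order, reducing (2) and (3) to the convergence of the modular $\rho(v):=\int_{\mathbb{R}^{N}}V(|x|)f(v)^2\,dx$, the quantity on which $E$ is built. I keep the notation $\|v\|_o=\inf_{k>0}\frac1k\bigl(1+\rho(kv)\bigr)$, so that $\|v\|\ge\|v\|_o$.

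For (1) I would bound the Amemiya quotient $h(k):=\frac1k\bigl(1+\rho(ku)\bigr)$ from below uniformly in $k$, writing $M:=\rho(u)$. The key elementary fact is that $t\mapsto f(t)/t$ is non-increasing on $(0,+\infty)$: property (5) gives $tf'(t)\le f(t)$, i.e. $(f(t)/t)'\le0$. Hence $f(kt)\ge kf(t)$ for $0<k\le1$ and $t\ge0$, so that $\rho(ku)\ge k^2\rho(u)$ when $k\le1$; while for $k\ge1$ the convexity of $f^2$ (property (10)) together with $f(0)=0$ yields $\rho(ku)\ge k\rho(u)$. Consequently, for $k\ge1$ one has $h(k)\ge\frac1k+M\ge M$, and for $0<k<1$ one has $h(k)\ge\frac1k+kM\ge2\sqrt M$ by the arithmetic--geometric mean inequality. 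Since both $M$ and $2\sqrt M$ dominate $M/(1+\sqrt M)$, taking the infimum over $k$ gives $\|u\|_o\ge M/(1+\sqrt M)$, whence the claim (indeed with $C=1$).

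For (2), note first that $\|u_n-u\|\to0$ forces $\rho(u_n-u)\to0$: this is immediate from (1), since the increasing map $m\mapsto m/(1+\sqrt m)$ vanishes only at $m=0$. Moreover Corollary \ref{COR:est} gives $\|u_n-u\|_{L^{2^*}}\le S_N\|u_n-u\|\to0$, so along any subsequence a further subsequence converges a.e.; by the usual Urysohn argument it suffices to prove the two limits along such an a.e.-convergent (sub)sequence. The engine is the convergence of modulars $\rho(u_n)\to\rho(u)$, which I would obtain from the generalized dominated convergence theorem (Pratt's lemma): writing $f(u_n)^2\le\frac12 f(2u)^2+\frac12 f(2(u_n-u))^2\le\frac C2 f(u)^2+\frac C2 f(u_n-u)^2$ via convexity and property (11), the functions $Vf(u_n)^2$ are dominated by $G_n:=\frac C2 V\bigl(f(u)^2+f(u_n-u)^2\bigr)$, which converge a.e. to $\frac C2 Vf(u)^2$ and whose integrals converge (because $\rho(u_n-u)\to0$). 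With $\rho(u_n)\to\rho(u)$ in hand, the first limit is Scheffé's lemma applied to $Vf(u_n)^2\to Vf(u)^2$ a.e. with convergent integrals, and the second follows again from Pratt's lemma, dominating $V|f(u_n)-f(u)|^2\to0$ a.e. by $2V\bigl(f(u_n)^2+f(u)^2\bigr)$, whose integrals converge to $4\rho(u)$.

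For (3) the situation is reversed: from $u_n\to u$ a.e. and $\rho(u_n)\to\rho(u)$ I must recover $\|u_n-u\|_o\to0$. I would first reduce to showing $\rho(u_n-u)\to0$, because $\rho(v_n)\to0$ implies $\|v_n\|_o\to0$: for each fixed $k>0$, property (11) iterated gives $\rho(kv_n)\le C_k\rho(v_n)\to0$, so $\limsup_n\|v_n\|_o\le\frac1k$, and $k$ is arbitrary. To obtain $\rho(u_n-u)\to0$ I would invoke the Brezis--Lieb lemma for the Young function $\Phi=f^2$ with respect to the measure $V(|x|)\,dx$: convexity (10) and the $\Delta_2$-condition (11) guarantee the splitting estimate required by Brezis--Lieb, which, since $\sup_n\rho(u_n)<\infty$ and $u_n\to u$ a.e., yields $\rho(u_n)-\rho(u_n-u)-\rho(u)\to0$; combined with the hypothesis this forces $\rho(u_n-u)\to0$. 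I expect this last step---extracting the modular Brezis--Lieb splitting from the sole a.e.\ convergence together with convergence of the total modulars---to be the main obstacle, as it is the only genuinely non-elementary ingredient; everything else is convexity bookkeeping and standard convergence theorems.
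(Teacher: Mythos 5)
Your proof is correct, but it cannot be ``essentially the same approach as the paper'' for a simple reason: the paper does not prove this lemma at all --- it only refers to \cite{LiuWangWang1}, \cite{Uberlandio1}, \cite{Uberlandio2} and remarks that those proofs still work under hypothesis $(\mathbf{H})$. Measured against that, your argument is a genuine self-contained alternative, and all three parts check out. Writing, as you do, $\rho(v):=\int_{\mathbb{R}^N}V(|x|)f(v)^2\,dx$: in (1) the Amemiya-quotient bound is clean and even gives $C=1$, with the two dilation estimates ($\rho(ku)\ge k^2\rho(u)$ for $k\le1$ from $tf'(t)\le f(t)$, and $\rho(ku)\ge k\rho(u)$ for $k\ge1$ from convexity of $f^2$) doing exactly the right job; in (2) the combination of the subsequence principle, Pratt's generalized dominated convergence and Scheff\'e is sound, the domination $f(u_n)^2\le\frac{C}{2}\left(f(u)^2+f(u_n-u)^2\right)$ coming correctly from convexity plus property (11) of Lemma \ref{change2}. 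The one criticism concerns part (3): the Brezis--Lieb route you invoke is legitimate (the splitting estimate $|\Phi(a+b)-\Phi(a)|\le\varepsilon\Phi(a)+C_\varepsilon\Phi(b)$ does hold for the even convex $\Delta_2$ function $\Phi=f^2$, though you assert this rather than prove it), but it is a cannon aimed at a fly: the very domination you already set up in part (2) closes part (3) elementarily. Indeed $Vf(u_n-u)^2\le\frac{C}{2}V\left(f(u_n)^2+f(u)^2\right)$, the dominating functions converge a.e.\ to $CVf(u)^2$ with integrals converging to $C\rho(u)$ by the hypothesis of (3), while $Vf(u_n-u)^2\to0$ a.e.; Pratt's lemma then yields $\rho(u_n-u)\to0$ directly (equivalently, apply Fatou to the nonnegative functions $\frac{C}{2}V\left(f(u_n)^2+f(u)^2\right)-Vf(u_n-u)^2$), and your reduction $\rho(v_n)\to0\Rightarrow\|v_n\|_o\to0$ via iterated $\Delta_2$ finishes. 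So the step you flag as ``the main obstacle'' dissolves into the same two-line argument you used for the second limit of (2), and no Brezis--Lieb theorem is needed anywhere.
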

\begin{proof}
For the proof see \cite{LiuWangWang1}, \cite{Uberlandio1}, \cite{Uberlandio2}. We just point out that the proofs   also work in our hypotheses, which are a little different from theirs. 
\end{proof}


\begin{cor}
\label{COR:embed}
Assuming ${\bf (H})$, we have the continuous embedding $X \hookrightarrow E$.
\end{cor}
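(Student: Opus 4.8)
The plan is to verify the set-theoretic inclusion $X \subseteq E$ and then the norm estimate $\|u\|_E \le C\,\|u\|_X$, both of which rely essentially only on property (2) of Lemma \ref{change2}, namely $|f(t)|\le |t|$. (Notice that the specific bound $V(r)\le C/r^2$ of $(\mathbf{H})$ is not itself needed here; only the continuity of $V$ and the pointwise control on $f$ enter.)

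First I would settle the inclusion. If $u\in X$, then $\int_{\mathbb{R}^{N}} V(|x|)\,u^2\,dx<+\infty$, and since $f(u)^2\le u^2$ pointwise by property (2), one gets $\int_{\mathbb{R}^{N}} V(|x|)\,f(u)^2\,dx \le \int_{\mathbb{R}^{N}} V(|x|)\,u^2\,dx<+\infty$, so that $u\in E$ and $X\subseteq E$.

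The substance is the continuity estimate. Write $A:=\int_{\mathbb{R}^{N}} V(|x|)\,u^2\,dx$, so that $A\le \|u\|_X^2$. The Dirichlet part of the $E$-norm is harmless, since $\|u\|_{1,2}\le \|u\|_X$ by definition of the $X$-norm. The work lies in bounding the Orlicz norm $\|u\|_o=\inf_{k>0}\frac{1}{k}\bigl[\,1+\int_{\mathbb{R}^{N}} V(|x|)\,f(ku)^2\,dx\,\bigr]$. Applying property (2) once more in the scaled form $f(ku)^2\le k^2 u^2$, I obtain $\int_{\mathbb{R}^{N}} V(|x|)\,f(ku)^2\,dx\le k^2 A$, whence $\|u\|_o\le \inf_{k>0}\bigl(\tfrac{1}{k}+kA\bigr)$. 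Optimizing the elementary function $k\mapsto \tfrac{1}{k}+kA$ over $k>0$: if $A>0$, the choice $k=A^{-1/2}$ yields $\|u\|_o\le 2\sqrt{A}\le 2\,\|u\|_X$, while if $A=0$ one lets $k\to+\infty$ to get $\|u\|_o=0$, consistent with the same bound. Combining the two parts gives $\|u\|_E=\|u\|_{1,2}+\|u\|_o\le 3\,\|u\|_X$, which is exactly the asserted continuous embedding $X\hookrightarrow E$.

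I do not expect a genuine obstacle in this argument: the only point demanding a little care is the degenerate case $A=0$, where the infimum defining $\|u\|_o$ is approached as $k\to+\infty$ rather than attained, and the straightforward minimization of $\tfrac{1}{k}+kA$ that produces the square-root factor responsible for the clean constant.
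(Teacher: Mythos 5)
Your proof is correct. Note that the paper itself does not give an argument for this corollary: it simply states that it is Corollary 2.1 of \cite{Uberlandio1} and that the proof there can be repeated. Your proposal therefore supplies a genuinely self-contained verification, and it is along the natural lines one would expect that cited proof to follow: the inclusion $X\subseteq E$ and the modular bound both reduce to property (2) of Lemma \ref{change2}, $|f(t)|\le|t|$, applied in the scaled form $f(ku)^2\le k^2u^2$, followed by the elementary minimization of $k\mapsto \tfrac1k+kA$ in the Amemiya-type norm $\|u\|_o$, with the degenerate case $A=0$ handled by letting $k\to+\infty$. Two things your write-up buys that the paper's citation does not: an explicit constant ($\|u\|\le 3\|u\|_X$), and the observation that the quantitative part of $(\mathbf{H})$, namely $V(r)\le C/r^2$ near the origin, plays no role here --- only the pointwise control of $f$ enters, so the embedding holds for any nonnegative measurable $V$. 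The only point worth stating a touch more explicitly is that continuity of the (linear) inclusion map is equivalent to the bound $\|u\|\le C\|u\|_X$, which is exactly what you prove.
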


\proof
This is Corollary 2.1 of \cite{Uberlandio1}. Their proof can be repeated in our case.
\endproof

\begin{lem}
$C_{\mathrm{c}, r}^{\infty }( \mathbb{R}^{N} ) $ is dense in $E$.
\end{lem}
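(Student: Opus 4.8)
The plan is to factor the statement through the intermediate space $X$, using the continuous embedding $X\hookrightarrow E$ of Corollary \ref{COR:embed}. Concretely, I would first show that $X$ is dense in $E$ (for the norm $\|\cdot\|$), and then that $C_{\mathrm{c},r}^\infty(\mathbb{R}^N)$ is dense in $X$ (for its Hilbert norm); composing the two approximations and invoking the continuity of $X\hookrightarrow E$, a diagonal argument then yields the claim. The point of this splitting is that it confines the only genuinely nonlinear feature of the problem — the Orlicz norm $\|\cdot\|_o$, built from $f(\cdot)^2$ — to the first step, where it can be handled by Lemma \ref{lem:properties}, leaving the second step an entirely linear (weighted $L^2$) density question.

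For the first step I would use truncation in height. Given $u\in E$, set $u_n:=\max\{-n,\min\{n,u\}\}$. Each $u_n$ lies in $X$: it is bounded, so near the origin $\int_{B_1}V u_n^2\le n^2\int_{B_1}V<\infty$ because $V\le C|x|^{-2}$ and $N\ge3$ make $V$ integrable on $B_1$; on an intermediate annulus $V$ is bounded by continuity while $u_n$ is bounded on a set of finite measure; and where $|u|\le1$ (which includes a neighbourhood of infinity, by the pointwise decay in Corollary \ref{COR:est}) property (7) of Lemma \ref{change2} gives $u^2\le C_1^{-2}f(u)^2$, so $\int V u_n^2\le C_1^{-2}\int V f(u)^2<\infty$. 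To see $u_n\to u$ in $E$, note $\nabla(u_n-u)=-\nabla u\,\mathbf{1}_{\{|u|>n\}}$, so $\|u_n-u\|_{1,2}^2=\int_{\{|u|>n\}}|\nabla u|^2\to0$ by absolute continuity of the integral; and since $u_n\to u$ a.e. with $f(u_n)^2\le f(u)^2$, dominated convergence gives $\int V f(u_n)^2\to\int V f(u)^2$, whence $\|u_n-u\|_o\to0$ by Lemma \ref{lem:properties}(3).

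For the second step, starting from $v\in X$, I would truncate in height again (now a routine dominated-convergence argument, since both $\int|\nabla\cdot|^2$ and $\int V|\cdot|^2$ are linear $L^2$ quantities) to reduce to bounded $v$, and then cut off near the origin and near infinity. Near infinity I multiply by a standard cutoff $\theta_R$ (equal to $1$ on $B_R$, vanishing outside $B_{2R}$, with $|\nabla\theta_R|\le C/R$); the term $\int|v\nabla\theta_R|^2$ is controlled via H\"older and the $L^{2^*}$ bound of Corollary \ref{COR:est}, and tends to $0$. Near the origin I multiply by a cutoff $\zeta_\varepsilon$ exploiting that points have zero $D^{1,2}$-capacity for $N\ge3$, so that $\int|\nabla\zeta_\varepsilon|^2\to0$ and, $v$ being bounded, $\int|v\nabla\zeta_\varepsilon|^2\to0$; the weighted terms converge by dominated convergence. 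This leaves a bounded function supported in an annulus $\{\varepsilon\le|x|\le 2R\}$ on which $V$ is bounded, so a radial mollification produces functions in $C_{\mathrm{c},r}^\infty(\mathbb{R}^N)$ converging in $\|\cdot\|_{1,2}$ and, $V$ being bounded on the support, in $L^2(V\,dx)$ as well, hence in the norm of $X$.

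The main obstacle is the nonlinearity of the Orlicz norm: one cannot pass approximations through $\|\cdot\|_o$ by linearity, and this is exactly what forces the two-step scheme and the reliance on the modular convergence criterion of Lemma \ref{lem:properties}(3) (which in turn rests on the $\Delta_2$-type property (11) of $f$). A secondary, classical technical point is the removal of the origin in the $D^{1,2}_r$ norm via the vanishing capacity of points in dimension $N\ge3$; everything else reduces to routine truncation and mollification.
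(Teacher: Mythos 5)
Your proof is correct, but it takes a genuinely different route from the paper's. The paper works directly in $E$: it first treats $u$ with bounded support, taking mollifications $\varphi_n \to u$ in $D_r^{1,2}(\mathbb{R}^N)$ and a.e., and proves the modular convergence $\int V f(\varphi_n)^2\,dx \to \int V f(u)^2\,dx$ by splitting into $B_1$ --- where $(\mathbf{H})$ gives $V f(\varphi_n)^2 \le C\varphi_n^2/|x|^2$ and \emph{Hardy's inequality} produces an $L^1$ dominating function --- and a bounded annulus where $V$ is bounded; Lemma \ref{lem:properties}(3) then yields $\|\varphi_n - u\|_o \to 0$, and the general case follows by spatial truncation $\zeta_n u \to u$ in $E$. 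You instead factor through $X$: the height truncations $T_n(u)=\max\{-n,\min\{n,u\}\}$ land in $X$ (this is where you use $(\mathbf{H})$, to make $\int_{B_1} V\,dx$ finite for $N\ge 3$) and converge to $u$ in $E$ by the monotone domination $|f(T_n(u))| \le |f(u)|$ together with the same Lemma \ref{lem:properties}(3); the remaining problem --- density of $C_{\mathrm{c},r}^{\infty}(\mathbb{R}^N)$ in $X$ --- is purely linear, and you solve it with standard cutoffs (zero capacity of a point for $N\ge 3$ near the origin, H\"older with the $L^{2^*}$ bound at infinity) and radial mollification on an annulus where $V$ is bounded; Corollary \ref{COR:embed} and a diagonal argument finish. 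Both proofs hinge on the same key device, the modular-to-norm criterion of Lemma \ref{lem:properties}(3), but they localize the difficulty differently: the paper pays for mollifying across the singularity of $V$ with Hardy's inequality, which you avoid entirely by excising a neighbourhood of the origin before mollifying; in exchange you must invoke the embedding $X\hookrightarrow E$, and your scheme yields as by-products the density of $X$ in $E$ and of $C_{\mathrm{c},r}^{\infty}(\mathbb{R}^N)$ in $X$, statements the paper's proof does not isolate.
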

\begin{proof}
The proof is contained in the Master's degree thesis \cite{Ligani}, which is unpublished and in Italian, so we give it here. Let $u \in E$ and assume first that 
supp$\,u$ is bounded. By standard results, there is a sequence $ \{ \varphi_n \}_n \subseteq C_{\mathrm{c}, r}^{\infty }( \mathbb{R}^{N} ) $ such that $\varphi_n  \rightarrow u$ in $D_r^{1,2} \left(\mathbb{R}^{N}\right)$ and $\varphi_n  (x) \rightarrow u(x)$ for a.e. $x\in\mathbb{R}^{N} $. We have to prove that $||\varphi_n - u ||_o \rightarrow 0$. Thanks to Lemma \ref{lem:properties}, it is enough to prove the following claim:
$$
\int_{\mathbb{R}^{N}} V(|x|) f (\varphi_n )^2\, dx \rightarrow \int_{\mathbb{R}^{N}} V(|x|)  f(u )^2 \, dx .
$$
By hypothesis, there is $M>0$ such that supp$u \subseteq B_M$. As the sequence $ \{ \varphi_n \}_n $ is obtained by convolution of $u$ with a family of mollifiers, we can assume supp$\varphi_n \subseteq B_{M+1}$ for all $n$. Hence we have
$$
\int_{\mathbb{R}^{N}} V(|x|) f(\varphi_n)^2 \, dx = \int_{\{ |x| \leq 1 \} } V(|x|)  f(\varphi_n )^2 \, dx + \int_{\{ 1 \leq |x| \leq M+1 \} } V(|x|)  f (\varphi_n)^2 \, dx.
$$
We have $V(|x|) f(\varphi_n (x))^2 \rightarrow V(|x|) f(u (x))^2 $ a.e., and we will apply Dominated Convergence Theorem. If $|x|\leq 1$ and $x \not= 0$, using ${\bf (H)} $ and Lemma \ref{change2} we have
$$
V(|x|) f(\varphi_ n (x))^2 \leq C \frac{f(\varphi_ n (x))^2 }{|x|^2} \leq C \frac{\varphi_ n (x)^2 }{|x|^2}.
$$
As $\varphi_n  \rightarrow u$ in $D_r^{1,2} \left(\mathbb{R}^{N}\right)$, from Hardy's inequality we get $\frac{\varphi^2_ n  }{|x|^2} \rightarrow 
\frac{u^2 }{|x|^2}$ in $L^1 ( \mathbb{R}^{N} )$, whence, up to a subsequence, there exists a function $h \in L^1 ( \mathbb{R}^{N} )$ such that 
$\frac{\varphi^2_n  }{|x|^2} \leq h$. This implies 
$$
V(|x|) f(\varphi_ n (x))^2 \leq C h
$$
for a.e. $x \in B_1$. By Dominated Convergence Theorem we get
$$
\int_{\{ |x| \leq 1 \} } V(|x|)  f(\varphi_n )^2 \, dx \rightarrow \int_{\{ |x| \leq 1 \} } V(|x|)  f(u )^2 \, dx .
$$
If $1 \leq |x| \leq M+1$ we get $V(|x|) f(\varphi_n )^2 \leq C \varphi_n^2 $. As $\varphi_n  \rightarrow u$ in $L^{2^*} \left(\mathbb{R}^{N}\right)$, we 
have  $\varphi_n  \rightarrow u$ in $L^{2} \left( B_{M+1}\right)  $, whence there exists $h_1 \in L^1 \left( B_{M+1}\right) $ such that $\varphi_n (x)^2 \leq h_1 (x)$ for a.e. $x \in B_{M+1}$. Hence $V(|x|) f(\varphi_n)^2 \leq C h_1$ and by Dominated Convergence
$$
\int_{\{ 1 \leq |x| \leq M+1 \} } V(|x|)  f(\varphi_n )^2 \, dx \rightarrow \int_{\{ 1 \leq |x| \leq M+1 \} } V(|x|)  f(u )^2 \, dx.
$$
This concludes the proof if supp$\,u$ is bounded. In the general case, we choose a sequence of standard truncation functions $\{ \zeta_n \}_n $. It is easy to show that $\zeta_n u \rightarrow u$ in $E$ for every $u \in E$, and combining this result with the previous one we get the thesis.
\end{proof}

\begin{lem}
\label{A6}

For any $r,R $ such that $0<r<R$, the embedding
$$
E \hookrightarrow L^{2}(B_R \setminus {\overline B_r })
$$
is continuous and compact.
\end{lem}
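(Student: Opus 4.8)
The plan is to realize the restriction of functions in $E$ to the annulus $A := B_R \setminus \overline{B_r}$ as functions in $H^1(A)$, with uniform control of the $H^1$-norm, and then invoke a standard Rellich--Kondrachov argument on the bounded domain $A$. The point is that, although $u \in E$ has a gradient that is globally in $L^2$, its $L^2$-mass over all of $\mathbb{R}^N$ need not be finite; what saves us is that $A$ stays at a positive distance from the origin, where the radial pointwise estimate provides an $L^2$-bound.

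The first step is to bound $u$ on $A$. By Corollary \ref{COR:est}, every $u \in E$ satisfies $|u(x)| \leq C_N \|u\|/|x|^{(N-2)/2}$ for a.e. $x$, so on $A = \{ r \leq |x| \leq R\}$ we get the uniform bound $|u(x)| \leq C_N r^{-(N-2)/2}\|u\|$. Since $|A| < \infty$, this immediately yields $\|u\|_{L^2(A)} \leq C \|u\|$ with $C = C(N,r,R)$; this already establishes the \emph{continuous} embedding $E \hookrightarrow L^2(A)$. Moreover, from the definition of the norm $\|u\| = \|u\|_{1,2} + \|u\|_o$ we have $\|\nabla u\|_{L^2(A)} \leq \|\nabla u\|_{L^2(\mathbb{R}^N)} = \|u\|_{1,2} \leq \|u\|$. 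Combining the two estimates, the restriction operator $u \mapsto u|_A$ is a bounded linear map $E \to H^1(A)$, with $\|u\|_{H^1(A)} \leq C \|u\|$.

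For the second step, since $A$ is a bounded domain with smooth (in particular Lipschitz) boundary, the Rellich--Kondrachov theorem gives the compact embedding $H^1(A) \hookrightarrow\hookrightarrow L^2(A)$. Composing the bounded operator $E \to H^1(A)$ with this compact embedding shows that $E \hookrightarrow L^2(A)$ is compact, which is the assertion. I do not expect any serious obstacle here: the only nontrivial point is verifying that restrictions of $E$-functions genuinely lie in $H^1(A)$, and this is exactly where the radial decay estimate of Corollary \ref{COR:est} is essential, converting the global $D^{1,2}$-control into local $L^2$-control away from the origin. Once membership in $H^1(A)$ is secured, the compactness is purely classical. (Equivalently, one could factor through $E \hookrightarrow D_r^{1,2}(\mathbb{R}^N)$ and carry out the same argument for $D_r^{1,2}(\mathbb{R}^N)$, but working directly in $E$ via Corollary \ref{COR:est} is the most economical route.)
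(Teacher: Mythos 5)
Your proof is correct and follows essentially the same route as the paper: the paper's (one-line) proof reduces the claim to the continuous embedding $E \hookrightarrow D_r^{1,2}(\mathbb{R}^N)$ plus the standard fact that $D_r^{1,2}(\mathbb{R}^N)$ embeds continuously and compactly into $L^{2}$ of an annulus, and your argument (pointwise radial bound from Corollary \ref{COR:est} away from the origin, hence restriction bounded into $H^1(B_R\setminus\overline{B_r})$, then Rellich--Kondrachov) is exactly the standard proof of that fact. You have simply made explicit the details the paper leaves as "easily proved."
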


\begin{proof}
The embedding result is easily proved for the space $D_r^{1,2} \left(\mathbb{R}^{N}\right)$, so the thesis derives from the continuous embedding 
$E\hookrightarrow D_r^{1,2} \left(\mathbb{R}^{N}\right).$ 
\end{proof}

\section{Compactness results for the space $E$}\label{COMP}

Let $N\geq 3$ and let $V$ and $K$ be as in $\left( \mathbf{H}\right) $.
In this section we state the main compactness results of this paper, concerning the space $E$, defined as above.  
The compactness results that we state here will be proved in Sections \ref{SEC:1} and \ref{SEC:2}. They concern the embedding properties of $E$ into the sum
space 
\[
L_{K}^{q_{1}}+L_{K}^{q_{2}}:=\left\{ u_{1}+u_{2}:u_{1}\in
L_{K}^{q_{1}}\left( \Bbb{R}^{N}\right) ,\,u_{2}\in L_{K}^{q_{2}}\left( \Bbb{R%
}^{N}\right) \right\} ,\quad 1<q_{i}<\infty . 
\]
We recall from \cite{BPR} that such a space can be characterized as the set
of measurable mappings $u:\Bbb{R}^{N}\rightarrow \Bbb{R}$ for which there
exists a measurable set $A\subseteq \Bbb{R}^{N}$ such that $u\in
L_{K}^{q_{1}}\left( A\right) \cap L_{K}^{q_{2}}\left( A^{c}\right) $. It is
a Banach space with respect to the norm 
\[
\left\| u\right\| _{L_{K}^{q_{1}}+L_{K}^{q_{2}}}:=\inf_{u_{1}+u_{2}=u}\max
\left\{ \left\| u_{1}\right\| _{L_{K}^{q_{1}}(\Bbb{R}^{N})},\left\|
u_{2}\right\| _{L_{K}^{q_{2}}(\Bbb{R}^{N})}\right\} 
\]
and the continuous embedding $L_{K}^{q}\hookrightarrow
L_{K}^{q_{1}}+L_{K}^{q_{2}}$ holds for all $q\in \left[ \min \left\{
q_{1},q_{2}\right\}, \max \left\{ q_{1},q_{2}\right\} \right] $. Our general embedding result is Theorem \ref{THM(cpt)} below. The assumptions of this result are quite general but not so easy to check, so more handy conditions ensuring these general assumptions will be provided by the next results. 

To state our results we introduce the following functions of $R>0$ and $q>1$:

\begin{eqnarray}
\mathcal{S}_{0}\left( q,R\right)&:=&
\sup_
{u\in E,\,
\left\| u\right\| =1  }
\int_{B_{R}}K\left( \left| x\right| \right)
\left| u\right| ^{q}dx,  \label{S_o :=}
\\
\mathcal{S}_{\infty }\left( q,R\right)&:=&
\sup_
{u\in E,\,
\left\| u\right\| =1  } 
\int_{\mathbb{R}%
^{N}\setminus B_{R}}K\left( \left| x\right| \right) \left| u\right| ^{q}dx.
\label{S_i :=}
\end{eqnarray}

\noindent  Clearly $\mathcal{S}_{0}\left( q,\cdot \right) $ is nondecreasing, $\mathcal{
S}_{\infty }\left( q,\cdot \right) $ is nonincreasing and both of them can
be infinite at some $R$.

\begin{thm}
\label{THM(cpt)} Let $q_{1},q_{2}>1$.

\begin{itemize}
\item[(i)]  If 
\begin{equation}
\mathcal{S}_{0}\left( q_{1},R_{1}\right) <\infty \quad \text{and}\quad 
\mathcal{S}_{\infty }\left( q_{2},R_{2}\right) <\infty \quad \text{for some }%
R_{1},R_{2}>0,  
\tag*{$\left( {\cal S}_{q_{1},q_{2}}^{\prime }\right) $}
\end{equation}
then $E$ is continuously embedded into $L_{K}^{q_{1}}(\mathbb{R}^{N})+L_{K}^{q_{2}}(\mathbb{R}^{N})$.

\item[(ii)]  If 
\begin{equation}
\lim_{R\rightarrow 0^{+}}\mathcal{S}_{0}\left( q_{1},R\right)
=\lim_{R\rightarrow +\infty }\mathcal{S}_{\infty }\left( q_{2},R\right) =0, 
\tag*{$\left({\cal S}_{q_{1},q_{2}}^{\prime \prime }\right) $}
\end{equation}
then $E$ is compactly embedded into $L_{K}^{q_{1}}(\mathbb{R}^{N})+L_{K}^{q_{2}}(\mathbb{R}^{N})$.

\end{itemize}
\end{thm}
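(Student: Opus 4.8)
The plan rests on one elementary observation about annuli. For any $0<\rho<\rho'$ the closed annulus $\overline{B_{\rho'}}\setminus B_\rho$ is a compact subset of $\mathbb{R}^N\setminus\{0\}$, so $K$ is bounded there by continuity (hypothesis $(\mathbf{H})$), and by Corollary \ref{COR:est} every $u\in E$ satisfies $|u(x)|\le C_N\|u\|\,|x|^{-(N-2)/2}\le C_N\|u\|\,\rho^{-(N-2)/2}$ on that annulus. Hence, for every $q>1$,
\[
\sup_{u\in E,\,\|u\|=1}\int_{\overline{B_{\rho'}}\setminus B_\rho}K(|x|)|u|^{q}\,dx<\infty .
\]
Combined with the monotonicity of $\mathcal{S}_0(q,\cdot)$ and $\mathcal{S}_\infty(q,\cdot)$ noted after their definition, this shows that finiteness of $\mathcal{S}_0(q_1,\cdot)$ (resp.\ $\mathcal{S}_\infty(q_2,\cdot)$) at a single radius forces finiteness at every radius: for $R>R_1$ one has $\mathcal{S}_0(q_1,R)\le\mathcal{S}_0(q_1,R_1)+\sup_{\|u\|=1}\int_{\overline{B_R}\setminus B_{R_1}}K|u|^{q_1}\,dx$, and symmetrically for $\mathcal{S}_\infty$.

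For part (i) I would fix any $R>0$ and take the splitting set $A=B_R$. By the observation above, both $\mathcal{S}_0(q_1,R)$ and $\mathcal{S}_\infty(q_2,R)$ are finite. For $u\in E$, $u\ne0$, applying the defining suprema to $u/\|u\|$ yields $\int_{B_R}K|u|^{q_1}\,dx\le\mathcal{S}_0(q_1,R)\|u\|^{q_1}$ and $\int_{B_R^c}K|u|^{q_2}\,dx\le\mathcal{S}_\infty(q_2,R)\|u\|^{q_2}$. Thus $u\chi_{B_R}\in L_K^{q_1}(\mathbb{R}^N)$ and $u\chi_{B_R^c}\in L_K^{q_2}(\mathbb{R}^N)$, so $u\in L_K^{q_1}+L_K^{q_2}$ with $\|u\|_{L_K^{q_1}+L_K^{q_2}}\le\max\{\mathcal{S}_0(q_1,R)^{1/q_1},\mathcal{S}_\infty(q_2,R)^{1/q_2}\}\,\|u\|$, which is the asserted continuous embedding.

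For part (ii), let $\{u_n\}$ be bounded in $E$, say $\|u_n\|\le C$. Since $E\hookrightarrow D_r^{1,2}(\mathbb{R}^N)$ and $D_r^{1,2}(\mathbb{R}^N)$ is a Hilbert space, a subsequence satisfies $u_n\rightharpoonup u$ in $D_r^{1,2}(\mathbb{R}^N)$; applying Lemma \ref{A6} to the exhausting annuli $\overline{B_k}\setminus B_{1/k}$ and extracting diagonally, I may assume $u_n\to u$ in $L^2$ on every bounded annulus and a.e.\ in $\mathbb{R}^N$. From Lemma \ref{lem:properties}(1) and Fatou's lemma one checks $u\in E$, so $v_n:=u_n-u$ is bounded in $E$, say $\|v_n\|\le M_0$. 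Fix $\varepsilon>0$. Using the smallness hypotheses of (ii) I pick $r>0$ with $\mathcal{S}_0(q_1,r)$ so small, and $R>r$ with $\mathcal{S}_\infty(q_2,R)$ so small, that the homogeneity bound of part (i) gives $\|v_n\chi_{B_r}\|_{L_K^{q_1}}\le\varepsilon$ and $\|v_n\chi_{B_R^c}\|_{L_K^{q_2}}\le\varepsilon$ for all $n$. On the fixed annulus $\Omega=B_R\setminus\overline{B_r}$, $K$ is bounded and, by Corollary \ref{COR:est}, $|v_n|$ is uniformly bounded; since $|\Omega|<\infty$ and $v_n\to0$ in $L^2(\Omega)$, it follows that $v_n\to0$ in $L^{q_1}(\Omega)$, whence $\int_\Omega K|v_n|^{q_1}\,dx\to0$. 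Writing $v_n=(v_n\chi_{B_r}+v_n\chi_\Omega)+v_n\chi_{B_R^c}$, with the first bracket in $L_K^{q_1}$ and the last term in $L_K^{q_2}$, and bounding the sum-norm by the maximum of the two summand norms, I obtain $\limsup_n\|v_n\|_{L_K^{q_1}+L_K^{q_2}}\le\varepsilon$. As $\varepsilon$ is arbitrary, $u_n\to u$ in $L_K^{q_1}+L_K^{q_2}$, which is the claimed compactness.

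The main obstacle is the bookkeeping in part (ii): the convergence must be produced \emph{through} the infimum defining the sum-space norm, which forces the three-region split of $v_n$ and the allocation of the central annulus to the $L_K^{q_1}$ component while the two tails are kept uniformly small in their respective components. The genuinely delicate points are (a) that the tail estimates are uniform in $n$, which is guaranteed by the $E$-boundedness of $v_n$ together with the scaling of $\mathcal{S}_0,\mathcal{S}_\infty$, and (b) the annulus convergence for a general exponent $q_1>1$ (including $q_1<2$), handled above via the uniform $L^\infty(\Omega)$ bound and the finiteness of $|\Omega|$ rather than a single Sobolev embedding. Keeping the order of limits straight — fix $\varepsilon$, hence $r,R,\Omega$, and only then let $n\to\infty$ along the once-and-for-all extracted subsequence — completes the argument.
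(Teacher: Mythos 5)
Your proof is correct, and its skeleton is the one the paper uses: split $\mathbb{R}^{N}$ into a small ball, a fixed annulus and an exterior region, control the two tails uniformly in $n$ through the scaling of $\mathcal{S}_{0}$ and $\mathcal{S}_{\infty}$, and kill the annulus contribution by local compactness. The execution, however, differs in three substantive ways, all legitimate. First, where the paper passes from smallness of the two integrals to convergence in $L_{K}^{q_{1}}+L_{K}^{q_{2}}$ by invoking Proposition 2.7 of \cite{BPR} (Proposition \ref{Prop(->0)}), you estimate the sum norm directly, feeding the explicit decomposition $v_{n}=\left(v_{n}\chi_{B_{r}}+v_{n}\chi_{\Omega}\right)+v_{n}\chi_{B_{R}^{c}}$ into the infimum defining $\left\|\cdot\right\|_{L_{K}^{q_{1}}+L_{K}^{q_{2}}}$; this makes the argument self-contained (in effect you reprove the special case of that proposition that is needed) and in part (i) it yields an explicit operator-norm bound $\max\left\{\mathcal{S}_{0}(q_{1},R)^{1/q_{1}},\mathcal{S}_{\infty}(q_{2},R)^{1/q_{2}}\right\}$. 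Second, the paper's annulus estimate is Lemma \ref{Lem(corone)}, a H\"older interpolation bounding $\int K\left|u\right|^{q_{1}}$ on the annulus by a power of the $L^{2}$ norm there; you replace it by the cruder pointwise bound $|u|\leq C_{N}\|u\|\,r^{-(N-2)/2}$ of Corollary \ref{COR:est} together with boundedness of $K$, which suffices both for finiteness in (i) and, combined with the $L^{2}$ convergence from Lemma \ref{A6}, for the vanishing of the annulus term in (ii) (and you rightly note that this handles every $q_{1}>1$, including $q_{1}<2$). Third, and this is a genuine gain in precision rather than just a stylistic choice: the paper proves (ii) for sequences $u_{n}\rightharpoonup 0$ weakly in $E$, whereas you prove it for arbitrary bounded sequences, extracting the weak limit in the Hilbert space $D_{r}^{1,2}(\mathbb{R}^{N})$ and verifying via Lemma \ref{lem:properties}(1) and Fatou that the limit lies in $E$. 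Your formulation is exactly the definition of a compact embedding and is the form actually used later in the paper (for instance in the Palais--Smale argument of Lemma \ref{LE:MP4}); the paper's weak-sequence formulation would additionally require bounded sequences in $E$ to admit weakly convergent subsequences in $E$ itself, a point about the Orlicz--Sobolev space $E$ that the paper leaves implicit and that your route sidesteps entirely.
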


\noindent It is obvious that $(\mathcal{S}_{q_{1},q_{2}}^{\prime \prime })$
implies $(\mathcal{S}_{q_{1},q_{2}}^{\prime })$. Moreover, these assumptions
can hold with $q_{1}=q_{2}=q$ and therefore Theorem \ref{THM(cpt)} also
concerns the embedding properties of $X$ into $L_{K}^{q}$, $1<q<\infty $.

We now look for explicit conditions on $V$ and $K$ implying $(\mathcal{S}
_{q_{1},q_{2}}^{\prime \prime })$ for some $q_{1}$ and $q_{2}$. More
precisely, in Theorem \ref{THM0} we will find a range of exponents $
q_{1} $ such that $\lim_{R\rightarrow 0^{+}}\mathcal{S}_{0}\left(q_{1},R\right)$ $=0$, while 
in Theorem \ref{THM1} we will do the same for exponents $q_{2}$ such that
$\lim_{R\rightarrow +\infty}\mathcal{S}_{\infty }\left( q_{2},R\right) =0$.


For $\alpha \in \mathbb{R}$, $\beta \in \left[ 0,1\right] $, we define three
functions $\alpha ^{*}\left( \beta \right) $, $q_0^{*} \left( \alpha ,\beta
\right) $, $q_{\infty}^{*}\left( \alpha ,\beta
\right)$ by setting 
$$
\alpha ^{*}\left( \beta \right) := \max \left\{  \beta \frac{N+2}{2} -1-\frac{N}{2} ,  \frac{\beta}{2} \left(3N -2 \right) -N  \right\},$$
$$q_0^{*}\left( \alpha ,\beta \right) :=\frac{2 \alpha  +2 N - \beta (N+2)}{N-2}, 
\quad\quad  q_{\infty}^{*}\left( \alpha ,\beta \right) :=2 \, \frac{ \alpha  + N - 2\beta }{N-2}.
$$
Notice that $\alpha ^{*}\left( \beta \right) =  \beta \frac{N+2}{2} -1-\frac{N}{2} = - \frac{N+2}{2} (1 - \beta )$ when $0\leq \beta \leq \frac{1}{2}$, 
and $\alpha ^{*}\left( \beta \right) =  \frac{\beta}{2} \left(3N -2 \right) -N $ when $\frac{1}{2}\leq \beta \leq 1 $.

\begin{thm}
\label{THM0}
Assume that there exists $R_{1}>0$ such that
\begin{equation}
\sup_{r\in \left( 0,R_{1}\right) }\frac{K\left( r\right) }{%
r^{\alpha _{0}}V\left( r\right) ^{\beta _{0}}}<+\infty \quad \text{for some }%
0\leq \beta _{0}\leq 1\text{~and }\alpha _{0}>\alpha ^{*}\left( \beta_{0}\right) .  \label{esssup in 0}
\end{equation}
Then $\displaystyle \lim_{R\rightarrow 0^{+}}\mathcal{S}_{0}\left(
q_{1},R\right) =0$ for every $q_{1}\in \mathbb{R}$ such that 
\begin{equation}
\max \left\{ 1,2\beta _{0}\right\} <q_{1}<q_0^{*}  \left( \alpha _{0},\beta
_{0}\right) .  \label{th1}
\end{equation}
\end{thm}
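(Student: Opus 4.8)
The plan is to fix $u\in E$ with $\|u\|=1$ and to bound $\int_{B_{R}}K(|x|)|u|^{q_{1}}dx$ by a constant \emph{independent of} $u$ times a strictly positive power of $R$, so that the supremum defining $\mathcal{S}_{0}(q_{1},R)$ tends to $0$ as $R\to 0^{+}$. The only information on $u$ that I would use consists of two uniform a priori estimates: the radial pointwise decay $|u(x)|\le C_{N}|x|^{-(N-2)/2}$ from Corollary \ref{COR:est}, and the bound $\int_{\mathbb{R}^{N}}V(|x|)f(u)^{2}dx\le C_{0}$ for a constant $C_{0}$ independent of $u$, obtained from Lemma \ref{lem:properties}(1) by setting $A:=\int V f(u)^{2}dx$ and solving the resulting inequality $A\le C(1+\sqrt{A})$ for $A$.

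The first step reduces the weighted integral to a product of these two resources. For $R\le R_{1}$, hypothesis \eqref{esssup in 0} gives $K(|x|)\le M|x|^{\alpha_{0}}V(|x|)^{\beta_{0}}$ on $B_{R}$, so it suffices to estimate $\int_{B_{R}}|x|^{\alpha_{0}}V^{\beta_{0}}|u|^{q_{1}}dx$. Assuming first $0<\beta_{0}<1$, I would write the integrand as $(Vf(u)^{2})^{\beta_{0}}\,|x|^{\alpha_{0}}|u|^{q_{1}}f(u)^{-2\beta_{0}}$ and split $B_{R}$ according to $\{|u|>1\}$ and $\{|u|\le 1\}$, since property (7) of Lemma \ref{change2} bounds $f(u)^{-2\beta_{0}}$ by $C|u|^{-\beta_{0}}$ on the former set and by $C|u|^{-2\beta_{0}}$ on the latter. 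On each piece, Hölder's inequality with exponents $1/\beta_{0}$ and $1/(1-\beta_{0})$ peels off the factor $(\int_{B_{R}}Vf(u)^{2})^{\beta_{0}}\le C_{0}^{\beta_{0}}$, leaving a pure integral $\int_{B_{R}}|x|^{\alpha_{0}/(1-\beta_{0})}|u|^{p}dx$ with $p=(q_{1}-\beta_{0})/(1-\beta_{0})$ on $\{|u|>1\}$ and $p=(q_{1}-2\beta_{0})/(1-\beta_{0})$ on $\{|u|\le 1\}$; here the assumption $q_{1}>2\beta_{0}$ guarantees $p>0$.

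These last integrals are the crux. On $\{|u|>1\}$ I would insert the pointwise bound $|u|\le C_{N}|x|^{-(N-2)/2}$, reducing to $\int_{B_{R}}|x|^{s}dx$ with $s=\frac{\alpha_{0}}{1-\beta_{0}}-\frac{q_{1}-\beta_{0}}{1-\beta_{0}}\cdot\frac{N-2}{2}$; this integral is finite and $O(R^{s+N})\to 0$ precisely when $s+N>0$, and a direct computation shows $s+N>0\iff q_{1}<q_{0}^{*}(\alpha_{0},\beta_{0})$, which is exactly the upper bound in \eqref{th1}. On $\{|u|\le 1\}$ I would simply bound $|u|^{p}\le 1$, reducing to $\int_{B_{R}}|x|^{\alpha_{0}/(1-\beta_{0})}dx$, which vanishes provided $\alpha_{0}/(1-\beta_{0})>-N$; this last inequality is automatic, since the elementary comparison $\alpha^{*}(\beta_{0})\ge \beta_{0}\frac{N+2}{2}-1-\frac{N}{2}\ge -N(1-\beta_{0})$ shows that $\alpha_{0}>\alpha^{*}(\beta_{0})$ forces $\alpha_{0}>-N(1-\beta_{0})$. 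Thus both pieces tend to $0$ uniformly in $u$, and the small-$u$ region imposes nothing new on $q_{1}$.

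Finally I would record that the standing hypothesis $\alpha_{0}>\alpha^{*}(\beta_{0})$ serves exactly to make the interval in \eqref{th1} nonempty: a short computation identifies $q_{0}^{*}>1$ with $\alpha_{0}>\beta_{0}\frac{N+2}{2}-1-\frac{N}{2}$ and $q_{0}^{*}>2\beta_{0}$ with $\alpha_{0}>\frac{\beta_{0}}{2}(3N-2)-N$, the two expressions whose maximum defines $\alpha^{*}(\beta_{0})$. The degenerate cases $\beta_{0}=0$ and $\beta_{0}=1$, where the Hölder splitting collapses, I would treat directly: for $\beta_{0}=0$ there is no $V$-factor and the pointwise bound alone yields the exponent $q_{0}^{*}(\alpha_{0},0)$, while for $\beta_{0}=1$ one extracts $\int Vf(u)^{2}$ as a single factor and estimates $\sup_{B_{R}}|x|^{\alpha_{0}}|u|^{q_{1}-1}$ (resp.\ $\sup_{B_{R}}|x|^{\alpha_{0}}|u|^{q_{1}-2}$) via the pointwise bound. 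The main obstacle is thus not conceptual but bookkeeping: keeping every Hölder pair conjugate and admissible, ensuring each pure-power integral converges with a strictly positive power of $R$, and verifying that the constraint from the small-$u$ region is absorbed by $\alpha_{0}>\alpha^{*}(\beta_{0})$ while the binding constraint $q_{1}<q_{0}^{*}(\alpha_{0},\beta_{0})$ comes solely from the large-$u$ region.
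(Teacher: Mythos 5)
Your proof is correct, and its skeleton is the one the paper uses: bound $K$ by $M|x|^{\alpha_{0}}V^{\beta_{0}}$ on $B_{R}$, split into $\{|u|>1\}$ and $\{|u|\le 1\}$, use property (7) of Lemma \ref{change2} to trade $V|u|$ (resp.\ $V|u|^{2}$) for $Vf(u)^{2}$, peel off $\left(\int Vf(u)^{2}dx\right)^{\beta_{0}}$ by H\"older, and estimate the remaining pure-power integral with the radial decay of Corollary \ref{COR:est}; this is exactly what Lemma \ref{Lem(Omega)} and the proof of Theorem \ref{THM0} do. The genuine difference is organizational but worth noting: you handle all $0<\beta_{0}<1$ with a single H\"older splitting, whereas the paper's Lemma \ref{Lem(Omega)} distinguishes $0<\beta<1/2$, $\beta=1/2$ and $1/2<\beta<1$. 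For $\beta_{0}\ge 1/2$ your estimate on $\{|u|>1\}$ is literally the paper's (the factor $m^{q-\beta}\left(\int_{B_{R}}|x|^{\frac{\alpha-\nu(q-\beta)}{1-\beta}}dx\right)^{1-\beta}\left(\int Vf(u)^{2}dx\right)^{\beta}$); for $\beta_{0}<1/2$ the paper instead keeps one low power of $|u|$ and runs a second H\"older step against the $L^{2^{*}}$ norm, producing the $\|u\|^{1-\beta}$ factor that forces its case split. Your observation that applying the pointwise bound to the full power $|u|^{(q_{1}-\beta_{0})/(1-\beta_{0})}$ gives the identical threshold is right, because the decay $|u|\le C_{N}\|u\|\,|x|^{-(N-2)/2}$ scales exactly like the Sobolev embedding: both computations reduce to the same inequality $q_{1}<q_{0}^{*}(\alpha_{0},\beta_{0})$, as your algebra confirms. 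So your route is a mild simplification (only the endpoints $\beta_{0}=0$ and $\beta_{0}=1$ need separate, and easy, treatment), while the paper's extra interpolation buys nothing additional for this theorem, since Lemma \ref{Lem(Omega)} is only ever invoked with $\nu=\frac{N-2}{2}$ and $m=M\|u\|$. A small additional merit: you derive the uniform bound $\int_{\mathbb{R}^{N}}Vf(u)^{2}dx\le C_{0}$ on the unit sphere of $E$ from Lemma \ref{lem:properties}(1) by solving $A\le C(1+\sqrt{A})$, a step the paper uses but only asserts; and your verification that $\alpha_{0}>\alpha^{*}(\beta_{0})\ge -N(1-\beta_{0})$ disposes of the small-$|u|$ region matches the paper's remark following the statement of Theorem \ref{THM0}.
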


\bigskip

 \noindent Notice that, as $\beta \leq 1$, it holds $\alpha ^{*}\left( \beta \right) \geq -N (1- \beta )$. 
Also notice that the inequality $\max \left\{ 1,2\beta
_{0}\right\} <q_0^{*}\left( \alpha _{0},\beta _{0}\right) $ is equivalent to $
\alpha _{0}>\alpha ^{*}\left( \beta _{0}\right) $, so that 
such inequality is automatically true in (\ref{th1}) and does not ask for further
conditions on $\alpha _{0}$ and $\beta _{0}$.

\begin{thm}
\label{THM1}
Assume that there exists $R_{2}>0$ such that
\begin{equation}
\sup_{r>R_{2}}\frac{K\left( r\right) }{r^{\alpha _{\infty
}}V\left( r\right) ^{\beta _{\infty }}}<+\infty \quad \text{for some }0\leq
\beta _{\infty }\leq 1\text{~and }\alpha _{\infty }\in \mathbb{R}.
\label{esssup all'inf}
\end{equation}
Then $\displaystyle \lim_{R\rightarrow +\infty }\mathcal{S}_{\infty }\left(
q_{2},R\right) =0$ for every $q_{2}\in \mathbb{R}$ such that 
\begin{equation}
q_{2}>\max \left\{ 1,2\beta _{\infty },q_{\infty}^{*}\left(\alpha _{\infty },\beta
_{\infty }\right) \right\} .  \label{th2}
\end{equation}
\end{thm}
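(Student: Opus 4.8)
The goal is to show that for $q_2$ in the stated range, $\mathcal{S}_\infty(q_2,R)\to 0$ as $R\to\infty$. My plan is to estimate, for a radial $u\in E$ with $\|u\|=1$, the integral $\int_{\mathbb{R}^N\setminus B_R} K(|x|)|u|^{q_2}\,dx$ by splitting the factor $|u|^{q_2}$ so as to exploit the two distinct decay mechanisms available for functions in $E$: the pointwise radial decay estimate $|u(x)|\le C_N\|u\|\,|x|^{-(N-2)/2}$ from Corollary \ref{COR:est}, and the weighted $L^2$ control coming from the potential $V$, namely $\int V(|x|)f(u)^2\,dx$, together with the Sobolev bound $\int|u|^{2^*}\,dx\le S_N^{2^*}\|u\|^{2^*}$. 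The hypothesis \eqref{esssup all'inf} lets me dominate $K(r)$ by $C\,r^{\alpha_\infty}V(r)^{\beta_\infty}$ for $r>R_2$, which is exactly the relative-growth information I intend to feed into these estimates.

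First I would reduce to radial functions (the supremum in $\mathcal{S}_\infty$ is over $E$, whose elements are radial) and pass to the one-dimensional integral in $r$, absorbing the surface measure $r^{N-1}$. After substituting $K\le C\,r^{\alpha_\infty}V^{\beta_\infty}$ on $\{r>R_2\}$, I would write $|u|^{q_2}=|u|^{2\beta_\infty}\cdot|u|^{q_2-2\beta_\infty}$ and bound the first factor using $V^{\beta_\infty}|u|^{2\beta_\infty}=(V f(u)^2)^{\beta_\infty}\cdot(|u|/f(u))^{2\beta_\infty}$ type manipulations — here property (7) of Lemma \ref{change2}, giving $|f(u)|\ge C_1\sqrt{|u|}$ for $|u|\ge1$ and $|f(u)|\ge C_1|u|$ for $|u|\le1$, is what converts $V|u|^{2}$-weights into the finite quantity $\int V f(u)^2$. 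The remaining power $|u|^{q_2-2\beta_\infty}$, times the leftover $r$-power, I would control by the pointwise decay $|u|\lesssim |x|^{-(N-2)/2}$ combined with the $L^{2^*}$ bound, using Hölder's inequality with exponents chosen so that the two pieces recombine into $\|u\|^{q_2}\le 1$. The exponent bookkeeping is designed precisely so that the leftover power of $r$ on $\{r>R\}$ is integrable and tends to $0$ as $R\to\infty$; the inequality $q_2>q_\infty^*(\alpha_\infty,\beta_\infty)$ is what guarantees the correct (negative) sign of this residual power, while $q_2>2\beta_\infty$ ensures the splitting $|u|^{q_2-2\beta_\infty}$ has a nonnegative exponent and $q_2>1$ keeps us in the admissible Lebesgue range.

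Concretely, I expect the cleanest route is a single application of Hölder with three factors: one factor carrying $(Vf(u)^2)^{\beta_\infty}$ (integrating against $V\,dx$ to a bounded quantity via Lemma \ref{lem:properties}(1) or directly the definition of $E$), one factor carrying a high power of $|u|$ controlled by the Sobolev $L^{2^*}$ norm, and one purely radial factor $\int_R^\infty r^{\gamma}\,dr$ whose convergence and vanishing as $R\to\infty$ follow from $\gamma<-1$, this last inequality being algebraically equivalent to $q_2>q_\infty^*$. I would verify that the three Hölder exponents are conjugate and that each partial norm is bounded by a constant times $\|u\|=1$, so the whole expression is bounded by $C\,(\text{tail integral})$, which goes to $0$.

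The main obstacle, and the step deserving the most care, is the choice of Hölder exponents together with the case analysis forced by property (7) of $f$: the estimate $|f(u)|\ge C_1\sqrt{|u|}$ holds only where $|u|\ge1$, whereas $|f(u)|\ge C_1|u|$ holds where $|u|\le1$, and these give different effective powers of $|u|$ recoverable from the finite quantity $\int Vf(u)^2$. I would therefore split the domain $\{r>R\}$ according to whether $|u(x)|\ge1$ or $|u(x)|<1$ and run the Hölder argument on each piece, checking in each case that the resulting residual $r$-exponent is strictly less than $-1$ exactly when \eqref{th2} holds; reconciling the two cases so that a single threshold $q_\infty^*$ emerges is where the definition of $q_\infty^*(\alpha_\infty,\beta_\infty)=2(\alpha_\infty+N-2\beta_\infty)/(N-2)$ gets pinned down, and it is the place most likely to hide a sign error or an off-by-one in the exponent of $r$.
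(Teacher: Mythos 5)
Your proposal is correct and follows, in substance, the same route as the paper (Lemma \ref{Lem(Omega2)} followed by the power counting in the proof of Theorem \ref{THM1}): dominate $K(r)\le \Lambda\, r^{\alpha_\infty}V(r)^{\beta_\infty}$ on $B_{R_2}^c$, use H\"older to split off a factor $\left(\int V f(u)^2\,dx\right)^{\beta_\infty}$ (bounded on the unit sphere of $E$), control the remaining powers of $|u|$ by the pointwise decay $|u|\le C_N|x|^{-(N-2)/2}$ and/or the $L^{2^*}$ bound, and check that the leftover radial tail has exponent whose negativity is exactly $q_2>q_\infty^*$, giving $\mathcal{S}_\infty(q_2,R)\le CR^\delta$ with $\delta<0$. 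Two implementation points where you deviate, both harmless. (i) To convert $V|u|^2$ into $Vf(u)^2$ you re-import the interior-case splitting $\{|u|\ge1\}$ versus $\{|u|<1\}$ of Lemma \ref{Lem(Omega)}, whereas the paper's exterior lemma avoids any splitting: since $|u|\le m/R_2^{(N-2)/2}$ on $B_{R_2}^c$, it rescales $u$ by $\gamma(m)=m/R_2^{(N-2)/2}+1$ and uses only the bound $|f(t)|\ge C_1|t|$ for $|t|\le1$ together with the monotonicity of $f^2$. Your version still closes, because for $\|u\|=1$ and $R>C_N^{2/(N-2)}$ the set $\{|u|\ge1\}\cap B_R^c$ is null, so only your second case survives and it produces exactly the threshold $q_\infty^*$ (and even for moderate $R$ the first case yields the weaker threshold $q_\infty^*-\beta_\infty$, so the maximum of the two is still $q_\infty^*$). (ii) Your ``single three-factor H\"older with an $L^{2^*}$ factor'' is literally available only for $\beta_\infty\le 1/2$, since the Sobolev factor must carry the weight $1-2\beta_\infty\ge0$; for $\beta_\infty>1/2$ and $\beta_\infty=1$ one must drop that factor and estimate all residual powers of $|u|$ pointwise, which is exactly the paper's case analysis. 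This costs nothing: since $N/2^*=(N-2)/2$ coincides with the decay rate $\nu$, shifting powers of $|u|$ between the Sobolev factor and the pointwise bound leaves the residual $r$-exponent unchanged, which is precisely why a single threshold $q_\infty^*$ emerges in every case.
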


\begin{rem}   \label{RMK: suff12}
\begin{enumerate}
\item  \label{RMK: suff12-V^0}We mean $V\left( r\right) ^{0}=1$ for every $r$
(even if $V\left( r\right) =0$). In particular, if $V\left( r\right) =0$ for $r>R_{2}$, then Theorem \ref{THM1} can be applied with $\beta
_{\infty }=0$ and assumption (\ref{esssup all'inf}) means 
\[
\esssup_{r>R_{2}}\frac{K\left( r\right) }{r^{\alpha _{\infty }}}%
<+\infty \quad \text{for some }\alpha _{\infty }\in \mathbb{R}.
\]
Similarly for Theorem \ref{THM0} and assumption (\ref{esssup in 0}), if $%
V\left( r\right) =0$ for $r\in \left( 0,R_{1}\right) $.

\item  \label{RMK: suff12-Vbdd}The assumptions of Theorems \ref{THM0} and 
\ref{THM1} may hold for different pairs $\left( \alpha _{0},\beta_{0}\right)$,
$\left( \alpha _{\infty },\beta _{\infty }\right) $. In this
case, of course, one chooses them in order to get the ranges for $q_{1},q_{2}
$ as large as possible. For example, assume that $V$ is bounded in a neighbourhood of 0. 
If condition (\ref{esssup in 0}) holds true for a pair $\left( \alpha _{0},\beta _{0}\right) $, then 
(\ref{esssup in 0}) also holds for all pairs $\left( \alpha
_{0}^{\prime },\beta _{0}^{\prime }\right) $ such that $\alpha _{0}^{\prime
}<\alpha _{0}$ and $\beta _{0}^{\prime }<\beta _{0}$. Therefore, since $\max
\left\{ 1,2\beta \right\} $ is nondecreasing in $\beta $ and $q_0^{*}\left( \alpha ,\beta \right) $ is increasing in 
$\alpha $ and decreasing in $\beta $, it is convenient to choose $\beta _{0}=0$ and the best interval where one
can take $q_{1}$ is $1<q_{1}<q_0^{*}\left(\overline{\alpha },0\right) $ with $
\overline{\alpha }:=\sup \left\{ \alpha _{0}:\esssup_{r\in \left(
0,R_{1}\right) }K\left( r\right) /r^{\alpha _{0}}<+\infty \right\} $
(here we mean $q_0^{*}\left( +\infty ,0\right) =+\infty $).
\end{enumerate}
\end{rem}


\section{Proof of Theorem \ref{THM(cpt)} \label{SEC:1}}

In this section we assume, as usual, $N\geq 3$ and hypothesis $\left( \mathbf{H}\right) $.

\begin{lem}
\label{Lem(corone)}Let $R>r>0$ and $1<q<\infty $. 
Then there exist $\tilde{C}=\tilde{C}\left(N,r,R,q \right) >0$ and $l=l\left(q \right) >0$
such that $q-2l>0$ and $\forall u\in E$ one has 
\begin{equation}
\int_{B_{R}\setminus B_{r}}K\left( |x| \right) \left|u\right| ^q dx
\leq
\tilde{C}\left\| K \right\|_{L^{\infty}(B_{R}\setminus B_{r})}
\left\|u\right\|^{q-2l} \left(\int_{B_R\setminus B_r}\left|u\right|^{2}dx\right)^l.
\label{LEM:corone}
\end{equation}
\end{lem}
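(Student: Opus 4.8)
The plan is to reduce everything to the pointwise decay estimate for radial $D_r^{1,2}$-functions provided by Corollary \ref{COR:est}, and then to interpolate the $L^q$-mass on the annulus against its $L^2$-mass by Hölder's inequality. First I would factor out the potential: since $K$ is continuous and the closed annulus is compact, $K$ is bounded there, and
$$\int_{B_R\setminus B_r}K(|x|)\,|u|^q\,dx\leq\|K\|_{L^\infty(B_R\setminus B_r)}\int_{B_R\setminus B_r}|u|^q\,dx,$$
so it suffices to estimate $\int_{B_R\setminus B_r}|u|^q\,dx$ by $\tilde{C}\,\|u\|^{q-2l}\big(\int_{B_R\setminus B_r}|u|^2\,dx\big)^l$ with a constant $\tilde{C}=\tilde{C}(N,r,R,q)$.

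Next I would fix the exponent $l$. Because $q>1$, I can choose $l=l(q)$ with $0<l<\min\{1,q/2\}$; this guarantees both $q-2l>0$ (as required) and $0<l<1$ (needed for the Hölder step below). I would then split $|u|^q=|u|^{q-2l}\,|u|^{2l}$ on the annulus. On $B_R\setminus B_r$ one has $|x|\geq r$, so the pointwise estimate of Corollary \ref{COR:est} yields, for a.e.\ $x$,
$$|u(x)|\leq C_N\frac{\|u\|}{|x|^{(N-2)/2}}\leq\frac{C_N}{r^{(N-2)/2}}\,\|u\|,$$
and hence, raising to the positive power $q-2l$,
$$|u(x)|^{q-2l}\leq\left(\frac{C_N}{r^{(N-2)/2}}\right)^{q-2l}\|u\|^{q-2l}.$$

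Pulling this constant out of the integral and applying Hölder's inequality to $\int_{B_R\setminus B_r}|u|^{2l}\,dx$ with conjugate exponents $1/l$ and $1/(1-l)$ (legitimate since $0<l<1$), I obtain
$$\int_{B_R\setminus B_r}|u|^{2l}\,dx\leq\left(\int_{B_R\setminus B_r}|u|^2\,dx\right)^l|B_R\setminus B_r|^{1-l}.$$
Combining the three displays gives the claim with $\tilde{C}=(C_N/r^{(N-2)/2})^{q-2l}\,|B_R\setminus B_r|^{1-l}$, which depends only on $N,r,R,q$.

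I do not expect a genuine obstacle here: the only point requiring a little care is the choice of $l$, which must simultaneously keep $q-2l$ strictly positive (so that the pointwise bound can be raised to that power and the factor $\|u\|^{q-2l}$ appears with a positive exponent) and keep $l<1$ (so that the Hölder conjugate $1/(1-l)$ is finite); the hypothesis $q>1$ is exactly what makes the interval $\big(0,\min\{1,q/2\}\big)$ nonempty, so a single argument covers all $q\in(1,\infty)$ without splitting into subcranges. I note in passing that the more classical route of interpolating between $L^2$ and $L^{2^*}$ would only handle $2\le q\le 2^*$ and would need separate treatment otherwise, whereas the pointwise bound handles every $q>1$ uniformly. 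Finally, finiteness of $\int_{B_R\setminus B_r}|u|^2\,dx$, implicitly needed for the estimate to be meaningful, follows either from the pointwise bound together with $|B_R\setminus B_r|<\infty$ or from the embedding in Lemma \ref{A6}.
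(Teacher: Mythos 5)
Your proof is correct and follows essentially the same route as the paper's: both rest on the pointwise decay bound of Corollary \ref{COR:est} on the annulus combined with H\"older's inequality, and your condition $l\in\bigl(0,\min\{1,q/2\}\bigr)$ is exactly the paper's choice $l=1/t'$ with $t>1$, $t'q>2$ in disguise. The only cosmetic difference is that you factor $\|K\|_{L^{\infty}}$ out first and apply H\"older to $|u|^{2l}$ at the end, whereas the paper applies H\"older to the product $K|u|^{q}$ at the start; the resulting constants coincide.
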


\proof
Let $u\in E$ and fix $t>1$ such that $t'q>2$ (where $t'=t/(t-1)$). 
Then, by H\"{o}lder inequality and the pointwise estimates of Corollary \ref{COR:est}, we have 
\begin{eqnarray*}
\int_{B_{R}\setminus B_{r}}K\left( \left| x\right| \right) \left| u\right|^{q} dx 
&\leq & \left( \int_{B_{R}\setminus B_{r}}K\left(\left| x\right| \right) ^{t}dx\right) ^{\frac{1}{t}}
\left( \int_{B_{R}\setminus B_{r}}\left|u\right| ^{t^{\prime }q}dx\right) ^{\frac{1}{t^{\prime }}} \\
&\leq & 
\left| B_{R}\setminus B_{r}\right| ^{\frac{1}{t}}
\left\| K  \right\|_{L^{\infty}(B_{R}\setminus B_{r})}
\left( \int_{B_{R}\setminus B_{r}}\left|u\right| ^{t^{\prime }q-2}\left|u\right|^{2} dx\right) ^{\frac{1}{t^{\prime }}}
\\
&\leq &
\left| B_{R}\setminus B_{r}\right| ^{\frac{1}{t}}
\left\| K  \right\|_{L^{\infty}(B_{R}\setminus B_{r})}
\left( \frac{C_N \left\|u\right\| }{r^{\frac{N-2}{2}}}\right) ^{q-2/t^{\prime}}
\left( \int_{B_{R}\setminus B_{r}}\left|u\right|^{2} dx\right) ^{\frac{1}{t^{\prime }}}.
\end{eqnarray*}
This proves (\ref{LEM:corone}), setting $l= 1/t'$ and $\tilde{C}= \left| B_{R}\setminus B_{r}\right| ^{\frac{1}{t}}
\left( C_N r^{-(N-2)/2}\right) ^{q-2/t^{\prime}}$.
\endproof

We now prove Theorem \ref{THM(cpt)}. Recall the definitions (\ref{S_o :=})-(\ref{S_i :=}) of the functions $\mathcal{S}_{0}$ and 
$\mathcal{S}_{\infty }$, and the following result from \cite{BPR} concerning convergence in the sum of Lebesgue spaces.

\begin{prop}[{\cite[Proposition 2.7]{BPR}}] 
\label{Prop(->0)}
Let $\left\{ u_{n}\right\}
\subseteq L_{K}^{p_{1}}+L_{K}^{p_{2}}$ be a sequence such that $\forall
\varepsilon >0$ there exist $n_{\varepsilon }>0$ and a sequence of
measurable sets $E_{\varepsilon ,n}\subseteq \mathbb{R}^{N}$ satisfying 
\begin{equation}
\forall n>n_{\varepsilon },\quad \int_{E_{\varepsilon ,n}}K\left( \left|
x\right| \right) \left| u_{n}\right| ^{p_{1}}dx+\int_{E_{\varepsilon
,n}^{c}}K\left( \left| x\right| \right) \left| u_{n}\right|
^{p_{2}}dx<\varepsilon .  \label{Prop(->0): cond}
\end{equation}
Then $u_{n}\rightarrow 0$ in $L_{K}^{p_{1}}+L_{K}^{p_{2}}$.
\end{prop}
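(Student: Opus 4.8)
The plan is to exploit the decomposition of each $u_n$ that the sets $E_{\varepsilon,n}$ hand us for free, estimating the two resulting pieces directly in $L_K^{p_1}$ and $L_K^{p_2}$. Fix $\varepsilon>0$ and let $n_\varepsilon$ and $\{E_{\varepsilon,n}\}$ be as in the statement. For $n>n_\varepsilon$ I would split $u_n$ along the set $E_{\varepsilon,n}$ and its complement, setting
\[
u_{n,1}:=u_n\,\chi_{E_{\varepsilon,n}},\qquad u_{n,2}:=u_n\,\chi_{E_{\varepsilon,n}^{c}},
\]
so that $u_{n,1}+u_{n,2}=u_n$, the two summands being supported on complementary sets. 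This is the natural candidate decomposition to plug into the infimum defining the sum norm.

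The key step is to recognize that the two integrals in (\ref{Prop(->0): cond}) are exactly the relevant Lebesgue norms of these pieces, raised to the appropriate powers:
\[
\|u_{n,1}\|_{L_K^{p_1}(\mathbb{R}^N)}^{p_1}=\int_{E_{\varepsilon,n}}K(|x|)\,|u_n|^{p_1}\,dx,\qquad \|u_{n,2}\|_{L_K^{p_2}(\mathbb{R}^N)}^{p_2}=\int_{E_{\varepsilon,n}^{c}}K(|x|)\,|u_n|^{p_2}\,dx.
\]
Since for $n>n_\varepsilon$ the sum of these two integrals is $<\varepsilon$, each of them is individually $<\varepsilon$; in particular they are finite, so $u_{n,1}\in L_K^{p_1}(\mathbb{R}^N)$ and $u_{n,2}\in L_K^{p_2}(\mathbb{R}^N)$ and the decomposition is admissible. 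Taking $p_i$-th roots gives $\|u_{n,1}\|_{L_K^{p_1}(\mathbb{R}^N)}<\varepsilon^{1/p_1}$ and $\|u_{n,2}\|_{L_K^{p_2}(\mathbb{R}^N)}<\varepsilon^{1/p_2}$. Because the sum norm is an infimum over all admissible decompositions, this particular one already yields, for every $n>n_\varepsilon$,
\[
\|u_n\|_{L_K^{p_1}+L_K^{p_2}}\le\max\bigl\{\|u_{n,1}\|_{L_K^{p_1}(\mathbb{R}^N)},\,\|u_{n,2}\|_{L_K^{p_2}(\mathbb{R}^N)}\bigr\}<\max\bigl\{\varepsilon^{1/p_1},\varepsilon^{1/p_2}\bigr\}.
\]

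To conclude, given a target tolerance $\delta>0$ I would pick $\varepsilon$ small enough that $\max\{\varepsilon^{1/p_1},\varepsilon^{1/p_2}\}<\delta$, which is possible precisely because $p_1,p_2\in(1,\infty)$ are finite and so both powers tend to $0$ as $\varepsilon\to0^{+}$; then $\|u_n\|_{L_K^{p_1}+L_K^{p_2}}<\delta$ for all $n>n_\varepsilon$, giving $u_n\to0$. I do not expect any genuine obstacle here, as this is a cited result from \cite{BPR} and the argument is a one-line estimate; the only points deserving care are that the admissibility of the chosen decomposition (each piece lying in the correct weighted Lebesgue space) is itself supplied by the finiteness coming from (\ref{Prop(->0): cond}), and that the step from the two separate bounds to a single sum-norm bound is exactly what the infimum in the definition of $\|\cdot\|_{L_K^{p_1}+L_K^{p_2}}$ delivers.
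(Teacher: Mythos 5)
Your proof is correct, and it is the natural (and essentially the standard) argument: split $u_n$ along $E_{\varepsilon,n}$, note that the two integrals in \eqref{Prop(->0): cond} are precisely $\|u_n\chi_{E_{\varepsilon,n}}\|_{L_K^{p_1}}^{p_1}$ and $\|u_n\chi_{E_{\varepsilon,n}^{c}}\|_{L_K^{p_2}}^{p_2}$, and feed this admissible decomposition into the infimum defining $\|\cdot\|_{L_K^{p_1}+L_K^{p_2}}$, then let $\varepsilon\to 0^{+}$ using $p_1,p_2<\infty$. Note that this paper itself gives no proof --- it simply cites \cite[Proposition 2.7]{BPR} --- and your argument coincides with the one used there, so there is nothing to flag.
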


\proof[Proof of Theorem \ref{THM(cpt)}]
We prove each part of the theorem separately.\smallskip

\noindent (i) By the monotonicity of $\mathcal{S}_{0}$ and $\mathcal{S}%
_{\infty }$, it is not restrictive to assume $R_{1}<R_{2}$ in hypothesis $%
\left( \mathcal{S}_{q_{1},q_{2}}^{\prime }\right) $. In order to prove the
continuous embedding, let $u\in E$, $u\neq 0$. Then we
have 
\begin{equation}
\int_{B_{R_{1}}}K\left( \left| x\right| \right) \left| u\right|
^{q_{1}}dx=\left\| u\right\| ^{q_{1}}\int_{B_{R_{1}}}K\left( \left| x\right|
\right) \frac{\left| u\right| ^{q_{1}}}{\left\| u\right\| ^{q_{1}}}dx\leq
\left\| u\right\| ^{q_{1}}\mathcal{S}_{0}\left( q_{1},R_{1}\right) 
\label{pf1}
\end{equation}
and, similarly, 
\begin{equation}
\int_{B_{R_{2}}^{c}}K\left( \left| x\right| \right) \left| u\right|
^{q_{2}}dx\leq \left\| u\right\| ^{q_{2}}\mathcal{S}_{\infty }\left(
q_{2},R_{2}\right) .  \label{pf2}
\end{equation}
We now use (\ref{LEM:corone}) of Lemma \ref{Lem(corone)} and Lemma \ref{A6} to deduce that there exists a constant $\tilde{C}_{1}>0$, 
independent from $u$, such that 
\begin{equation}
\int_{B_{R_{2}}\setminus B_{R_{1}}}K\left( \left| x\right| \right) \left|
u\right| ^{q_{1}}dx\leq \tilde{C}_{1}\left\| u\right\| ^{q_{1}}.  \label{pf3}
\end{equation}
Hence $u\in L_{K}^{q_{1}}(B_{R_{2}})\cap L_{K}^{q_{2}}(B_{R_{2}}^{c})$ and
thus $u\in L_{K}^{q_{1}}+L_{K}^{q_{2}}$. Moreover, if $u_{n}\rightarrow 0$
in $E$, then, using (\ref{pf1}), (\ref{pf2}) and (\ref
{pf3}), we get 
\[
\int_{B_{R_{2}}}K\left( \left| x\right| \right) \left| u_{n}\right|
^{q_{1}}dx+\int_{B_{R_{2}}^{c}}K\left( \left| x\right| \right) \left|
u_{n}\right| ^{q_{2}}dx=o\left( 1\right) _{n\rightarrow \infty },
\]
which means $u_{n}\rightarrow 0$ in $L_{K}^{q_{1}}+L_{K}^{q_{2}}$ by
Proposition \ref{Prop(->0)}. \emph{\smallskip }

\noindent (ii) Assume hypothesis $\left( \mathcal{S}_{q_{1},q_{2}}^{\prime
\prime }\right) $. Let $\varepsilon >0$ and let $u_{n}\rightharpoonup 0$ in $
E$. Then $\left\{ \left\| u_{n}\right\| \right\}_n $ is
bounded and, arguing as for (\ref{pf1}) and (\ref{pf2}), we can take $
r_{\varepsilon }>0$ and $R_{\varepsilon }>r_{\varepsilon }$ such that for
all $n$ one has 
\[
\int_{B_{r_{\varepsilon }}}K\left( \left| x\right| \right) \left|
u_{n}\right| ^{q_{1}}dx\leq  
\left\| u_{n}\right\| ^{q_{1}} \, \mathcal{S}_{0}\left( q_{1},r_{\varepsilon }\right) \leq \left( \sup_{n}\left\| u_{n}\right\|
^{q_{1}}\right) \mathcal{S}_{0}\left( q_{1},r_{\varepsilon }\right) <\frac{
\varepsilon }{3}
\]
and 
\[
\int_{B_{R_{\varepsilon }}^{c}}K\left( \left| x\right| \right) \left|
u_{n}\right| ^{q_{2}}dx\leq \left( \sup_{n}\left\| u_{n}\right\| ^{q_{2}} \right) \mathcal{S}
_{\infty }\left( q_{2},R_{\varepsilon }\right) <\frac{\varepsilon }{3}.
\]
Using (\ref{LEM:corone}) of Lemma \ref{Lem(corone)} and the boundedness of $\left\{ \left\|
u_{n}\right\| \right\} $ again, we infer that there exist two constants $
\tilde{C}_{2},l>0$, independent from $n$, such that 
\[
\int_{B_{R_{\varepsilon }}\setminus B_{r_{\varepsilon }}}K\left( \left|
x\right| \right) \left| u_{n}\right| ^{q_{1}}dx\leq \tilde{C}_{2}\left(
\int_{B_{R_{\varepsilon }}\setminus B_{r_{\varepsilon }}}\left| u_{n}\right|
^{2}dx\right) ^{l},
\]
where 
\[
\int_{B_{R_{\varepsilon }}\setminus B_{r_{\varepsilon }}}\left| u_{n}\right|
^{2}dx\rightarrow 0\quad \text{as }n\rightarrow \infty \quad \text{(}
\varepsilon ~\text{fixed)}
\]
thanks to Lemma \ref{A6}. Therefore we obtain 
\[
\int_{B_{R_{\varepsilon }}}K\left( \left| x\right| \right) \left|
u_{n}\right| ^{q_{1}}dx+\int_{B_{R_{\varepsilon }}^{c}}K\left( \left|
x\right| \right) \left| u_{n}\right| ^{q_{2}}dx<\varepsilon 
\]
for all $n$ sufficiently large, which means $u_{n}\rightarrow 0$ in $%
L_{K}^{q_{1}}+L_{K}^{q_{2}}$ (Proposition \ref{Prop(->0)}). This concludes
the proof of part (ii).
\endproof


\section{Proof of Theorems \ref{THM0} and \ref{THM1} \label{SEC:2}}

Assume as usual $N \geq 3$ and hypothesis $\left( \mathbf{H}\right)$. 
\begin{lem}
\label{Lem(Omega)}Let $R_0>0$ and assume
\[
\Lambda :=\sup_{x\in B_{R_0} }\frac{K\left( \left| x\right|
\right) }{\left| x\right| ^{\alpha }V\left( \left| x\right| \right) ^{\beta }%
}<+\infty \quad \text{for some }0\leq \beta \leq 1\text{~and }\alpha \in 
\mathbb{R}.
\]
Let $u\in E$ and assume that there exist $\nu \in \mathbb{R}$ and $m>0$
such that 
\[
\left| u\left( x\right) \right| \leq \frac{m}{\left| x\right| ^{\nu }}\quad 
\text{almost everywhere in } B_{R_0} .
\]
Then there exists a constant $C=C(N, R_0, \alpha, \beta )>0$ such that $\forall R \in (0,R_{0})$ and $\forall q>\max \left\{ 1,2\beta \right\} $, one has 
\medskip

$\displaystyle\int_{B_R }K\left( \left| x\right| \right) \left| u\right| ^{q} dx$
\[
\leq \left\{ 
\begin{array}{ll}
\Lambda  C m^{q-1}\left( \int_{B_R}|x|^{\frac{\alpha - \nu (q-1)}{N+2}2N } dx \right)^{\frac{N+2}{2N}} \, ||u|| & \text{if }\beta = 0, \\ 
\Lambda  C \left[ m^{q-1} \left( \int_{B_R}\left| x\right| ^{\frac{
\alpha -\nu \left( q-1\right) }{N+2 - \beta \left( N+2 \right) }2N}dx\right)^{
\frac{N+2 - \beta \left( N+2 \right) }{2N}} || u||^{1-\beta} + R^{N (1-\beta)+ \alpha }\right] \left( \int_{B_R} V(|x|) f(u)^2dx \right)^{\beta} \quad \medskip  & 
\text{if }0 < \beta < \frac{1}{2}, \\ 
 \Lambda  C \left[ m^{q-\beta }\left( \int_{B_R }\left| x\right|^{\frac{\alpha
-\nu \left( q-\beta \right) }{1-\beta }}dx\right) ^{1-\beta }  + R^{N (1-\beta )+ \alpha} \right] \left( \int_{B_R} V(|x|) f(u)^2dx \right)^{\beta},   \\
\medskip  & \text{if }\frac{1}{2}
\leq \beta <1, \\ 
\Lambda C \left[  m^{q-1} \left( \int_{B_R }\left| x\right| ^{2\alpha -2 \nu \left(
q-1\right) }V\left( | x| \right) f(u)^2  dx\right) ^{
\frac{1}{2}} \left( \int_{B_R} V(|x|) f(u)^2dx \right)^{\frac{1}{2}} + R^{\alpha}\int_{B_R} V(|x|) f(u)^2dx  \right] & \text{if }\beta =1.
\end{array}
\right. 
\]
\end{lem}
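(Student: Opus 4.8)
The plan is to get rid of $K$ at once and then redistribute the power $|u|^{q}$ so as to manufacture, in each regime of $\beta$, the energy $\int_{B_{R}}V(|x|)f(u)^{2}\,dx$ paired with an explicit radial integral. Since the definition of $\Lambda$ gives $K(|x|)\le\Lambda|x|^{\alpha}V(|x|)^{\beta}$ on $B_{R_{0}}$, for every $R\in(0,R_{0})$ one has
\[
\int_{B_{R}}K(|x|)|u|^{q}\,dx\le\Lambda\int_{B_{R}}|x|^{\alpha}V(|x|)^{\beta}|u|^{q}\,dx,
\]
so only the right-hand side must be estimated. Throughout I would use the decay bound $|u(x)|\le m|x|^{-\nu}$ to trade positive powers of $|u|$ for powers of $|x|$ (all such powers are positive because $q>\max\{1,2\beta\}$), together with Lemma \ref{change2}(7) to compare $f(u)^{2}$ with $|u|$ on the set $\{|u|\ge1\}$ and with $u^{2}$ on $\{|u|<1\}$. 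The four lines of the claim arise from four choices of how to spend the available powers of $|u|$.

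For $\beta=0$ there is no potential to recover. I would write $|u|^{q}=|u|^{q-1}|u|$, bound $|u|^{q-1}\le m^{q-1}|x|^{-\nu(q-1)}$, and apply H\"older with the conjugate exponents $2^{*}=2N/(N-2)$ and $(2^{*})'=2N/(N+2)$; the factor $\bigl(\int_{B_{R}}|u|^{2^{*}}dx\bigr)^{1/2^{*}}$ is then controlled by $S_{N}\|u\|$ through Corollary \ref{COR:est}, and the remaining radial integral has exactly the exponent $\tfrac{\alpha-\nu(q-1)}{N+2}2N$, yielding the first line.

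For $0<\beta<1$ I would first peel off the energy: writing $V^{\beta}|u|^{q}=(Vf(u)^{2})^{\beta}\,|u|^{q}f(u)^{-2\beta}$ and applying H\"older with exponents $1/\beta$ and $1/(1-\beta)$ extracts $\bigl(\int_{B_{R}}Vf(u)^{2}\,dx\bigr)^{\beta}$ and leaves $\bigl(\int_{B_{R}}(|x|^{\alpha}|u|^{q}f(u)^{-2\beta})^{1/(1-\beta)}dx\bigr)^{1-\beta}$. I would then split $B_{R}$ into $\{|u|\ge1\}$ and $\{|u|<1\}$. On $\{|u|<1\}$, Lemma \ref{change2}(7) gives $f(u)^{2}\ge C_{1}^{2}u^{2}$, whence $|u|^{q}f(u)^{-2\beta}\le C_{1}^{-2\beta}|u|^{q-2\beta}\le C_{1}^{-2\beta}$ since $q>2\beta$; integrating $|x|^{\alpha/(1-\beta)}$ over $B_{R}$ and raising to the power $1-\beta$ produces precisely the term $R^{N(1-\beta)+\alpha}$. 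On $\{|u|\ge1\}$, Lemma \ref{change2}(7) gives $f(u)^{2}\ge C_{1}^{2}|u|$, so $|u|^{q}f(u)^{-2\beta}\le C_{1}^{-2\beta}|u|^{q-\beta}$; here the two middle lines differ only in how the surviving power of $|u|$ is used. For $\tfrac12\le\beta<1$ I would apply the decay bound to the entire factor $|u|^{q-\beta}$, obtaining the $m^{q-\beta}$ term with radial exponent $\tfrac{\alpha-\nu(q-\beta)}{1-\beta}$. For $0<\beta<\tfrac12$ I would instead save one power of $|u|$, writing $|u|^{q-\beta}=|u|^{q-1}\cdot|u|$, bounding $|u|^{q-1}$ by the decay estimate and pairing the remaining $|u|$ with the Sobolev inequality via H\"older with $(2^{*},(2^{*})')$; after raising the whole bound to the power $1-\beta$ the exponents collapse to the stated $m^{q-1}$, radial exponent $\tfrac{\alpha-\nu(q-1)}{(N+2)(1-\beta)}2N$, and factor $\|u\|^{1-\beta}$.

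The case $\beta=1$ I would treat separately, since the H\"older pairing above degenerates. On $\{|u|\ge1\}$ the bound $|u|\le f(u)^{2}/C_{1}^{2}$ gives $|u|^{q}\le m^{q-1}|x|^{-\nu(q-1)}f(u)^{2}/C_{1}^{2}$, and a Cauchy--Schwarz splitting of $\int_{B_{R}}|x|^{\alpha-\nu(q-1)}Vf(u)^{2}$ yields the first summand with exponent $2\alpha-2\nu(q-1)$; on $\{|u|<1\}$ the bound $|u|^{q}\le u^{2}\le f(u)^{2}/C_{1}^{2}$ together with $|x|^{\alpha}\le R^{\alpha}$ gives $R^{\alpha}\int_{B_{R}}Vf(u)^{2}$. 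In all cases the constant $C$ absorbs $C_{1}$, $S_{N}$ and the prefactors $\omega_{N-1}/(s+N)$ of the radial integrals. I expect the main obstacle to be purely the exponent bookkeeping: choosing the apportionment of the free powers of $|u|$ between the decay bound and the Sobolev embedding (which is what forces the threshold at $\beta=\tfrac12$, mirroring the definition of $\alpha^{*}(\beta)$) and keeping track of the convergence of $\int_{B_{R}}|x|^{s}dx$, which imposes sign conditions on $\alpha$ (notably $\alpha\ge0$ in the line $\beta=1$) that are automatically met in the applications, where $\alpha>\alpha^{*}(\beta)$.
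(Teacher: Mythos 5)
Your proposal is correct and is essentially the paper's own argument: after the reduction $K\le\Lambda|x|^{\alpha}V^{\beta}$, both proofs split $B_R$ into $\{|u|\ge1\}$ and $\{|u|<1\}$, use Lemma \ref{change2}(7) to pass between $|u|$, $u^{2}$ and $f(u)^{2}$, extract $\bigl(\int_{B_R}V(|x|)f(u)^{2}dx\bigr)^{\beta}$ by H\"older with exponents $1/\beta$ and $1/(1-\beta)$, and then spend the leftover powers of $|u|$ on the decay bound plus (only when $\beta<\frac12$) one Sobolev-controlled factor $\|u\|$, which is exactly what produces the threshold at $\beta=\frac12$ and the four stated lines. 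The only deviations are organizational --- you apply the energy-extracting H\"older before splitting the domain and compress the paper's two-step Cauchy--Schwarz-then-H\"older computation for $\frac12<\beta\le1$ into a single H\"older step --- and the sign conditions on $\alpha$ you flag (convergence of the radial integrals, $\alpha\ge0$ when $\beta=1$) are precisely those the paper also uses tacitly.
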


\begin{proof}

Let us take $ R \in (0,R_{0})$ and define
$$B_{R}^1 = B_R \cap \{ x \in \mathbb{R}^{N} \, | \, |u(x)| \geq 1 \}, \quad B_{R}^2 = B_R \cap \{ x \in \mathbb{R}^{N} \, | \, |u(x) | < 1 \}.$$
Recall that, by Lemma \ref{change2}, there is $C_1 >0$ such that $|f(t)| \geq C_1 |t|$ when $|t|\leq 1$ and $|f(t)| \geq C_1 |t|^{1/2}$ when $|t| \geq 1$ . This implies 
$|f(u(x))| \geq C_1 |u(x)|^{1/2}$ when $x\in B_{R}^1$ and $|f(u(x))| \geq C_1 |u(x)|$ when $x\in B_{R}^2$, whence
\begin{equation}\label{disf}
\int_{B_{R}^1}V(|x|) f(u)^2  dx \geq C_{1}^2 \int_{B_{R}^1}V(|x|) |u|dx, \quad \int_{B_{R}^2}V(|x|) f(u)^2  dx \geq C_{1}^2 \int_{B_{R}^2}V(|x|) |u|^2 dx
\end{equation}
\noindent We distinguish several cases, where we will use H\"{o}lder inequality many
times. \smallskip

\noindent \emph{Case }$\beta =0$.\ \ 
\noindent We apply H\"older inequality with exponents $2^* = \frac{2N}{N-2}$ and $\frac{2N}{N+2}$, and standard Sobolev inequality (Corollary \ref{COR:est}), in order to get
{\allowdisplaybreaks
\begin{eqnarray*}
\frac{1}{\Lambda }\int_{B_R }K\left( \left| x\right| \right) \left|
u\right| ^{q} dx 
&\leq & \int_{B_R }\left| x\right|^{\alpha }\left| u\right| ^{q-1}\left| u\right| dx \\
&\leq & \left( \int_{B_R
}\left( \left| x\right| ^{\alpha }\left| u\right| ^{q-1}\right) ^{\frac{2N}{
N+2}}dx\right) ^{\frac{N+2}{2N}}\left( \int_{B_R }\left| u\right|
^{2^*}dx\right) ^{\frac{1}{2^*}} \\
&\leq & m^{q-1} S_N \left( \int_{B_R }\left| x\right| ^{\frac{\alpha -\nu
\left( q-1\right) }{N+2}2N}dx\right) ^{\frac{N+2}{2N}}\left\| u\right\| .
\end{eqnarray*}
}

\noindent \emph{Case }$0<\beta <1/2$.\ \ 
\noindent We write
$$\frac{1}{\Lambda }\int_{B_R }K\left( |x| \right) \left|
u\right|^{q} dx =\frac{1}{\Lambda }\int_{B_{R}^1 }K\left( \left| x\right| \right) \left|
u\right|^{q} dx +\frac{1}{\Lambda }\int_{B_{R}^2}K\left( \left| x\right| \right) \left|
u\right|^{q} dx .$$
\noindent Applying H\"{o}lder inequality first with conjugate exponents $ \frac{1}{\beta }$ and $\frac{1}{1-\beta }$, 
then with $2^*$ and $\frac{2N}{N+2}$, we get
$$\frac{1}{\Lambda }\int_{B_{R}^1 }K\left( \left| x\right| \right) \left|
u\right| ^{q} dx \leq  \int_{B_{R}^1  }\left| x\right| ^{\alpha }V\left( \left| x\right|
\right) ^{\beta }\left| u\right| ^{q} dx =\int_{B_{R}^1  }\left| x\right| ^{\alpha }V\left( \left| x\right|
\right) ^{\beta }\left| u\right| ^{q-\beta } \left| u\right| ^{\beta } dx  $$
$$\leq \left( \int_{B_{R}^1}\left( \left| x\right| ^{\alpha }\left| u\right|
^{q-1}\left| u\right| ^{1-\beta }\right) ^{\frac{1}{1-\beta }}dx\right)
^{1-\beta }\left( \int_{B_{R}^1 }V\left( \left| x\right| \right) \left|
u\right| dx\right) ^{\beta } $$
$$\leq \frac{1}{C^{2\beta}_1}\left( \int_{B_{R}^1} \left| x\right| ^{\frac{\alpha}{1-\beta} }\left| u\right|
^{\frac{q-1}{1-\beta}}\left| u\right|  dx\right)
^{1-\beta }\left( \int_{B_{R}^1 }V\left( \left| x\right| \right) f(u)^2 dx\right) ^{\beta } $$
$$\leq \frac{1}{C^{2\beta}_1}\left( \int_{B_{R}^1}\left( \left| x\right| ^{\frac{\alpha}{1-\beta} }\left| u\right|
^{\frac{q-1}{1-\beta}} \right)^{\frac{2N}{N+2}} dx\right)
^{(1-\beta )\frac{N+2}{2N} } \left(  \int_{B_{R}^1 } |u|^{2^*}dx \right)^{\frac{1-\beta}{2^*}}  \left( \int_{B_{R}^1 }V\left( \left| x\right| \right) f(u)^2 dx\right) ^{\beta } $$
$$\leq \frac{S^{1-\beta}_N}{C^{2\beta}_1} \, m^{q-1} \left( \int_{B_{R}}\left| x\right| ^{\frac{\alpha -\nu (q-1)}{1-\beta}\frac{2N}{N+2} }  dx\right)
^{(1-\beta )\frac{N+2}{2N} }\, ||u||^{1-\beta} \, \left( \int_{B_{R} }V\left( \left| x\right| \right) f(u)^2 dx\right) ^{\beta } .$$
\noindent On the other hand, as $q>1>2\beta$, $\alpha > -N(1-\beta )$ and $|u(x)| < 1 $ in $B_{R}^2$, we get
$$\frac{1}{\Lambda }\int_{B_{R}^2 }K\left( \left| x\right| \right) \left|
u\right| ^{q} dx \leq  \int_{B_{R}^2  }\left| x\right| ^{\alpha }V\left( \left| x\right|
\right) ^{\beta }\left| u\right| ^{q-2\beta} |u|^{2\beta} dx \leq \left(\int_{B_{R}^2  }\left( \left| x\right|^{\alpha}  |u|^{q-2\beta} \right)^{\frac{1}{1-\beta} }
dx\right)^{1-\beta}
\left( \int_{B_{R}^2  } V \, \left| u\right| ^{2 }  dx \right)^{\beta} $$
$$\leq \frac{1}{C^{2\beta}_1} \left(\int_{B_{R}^2  } \left| x\right|^{\frac{\alpha}{1-\beta}}  
dx\right)^{1-\beta} \, \left( \int_{B_{R}^2  } V \, f(u)^2  dx \right)^{\beta} \leq C(N,\alpha , \beta) R^{N(1-\beta ) +\alpha} \, \left( \int_{B_{R}  } V \, f(u)^2  dx \right)^{\beta}.$$
\noindent The thesis follows by summing the two inequalities we have obtained.

\smallskip

\noindent \emph{Case }$\beta =\frac{1}{2}$.\ \ 
We have
$$\frac{1}{\Lambda }\int_{B_{R}^1}   K\left( \left| x\right| \right) \left|
u\right| ^{q}dx \leq \int_{B_{R}^1} \left| x\right|
^{\alpha }V\left( \left| x\right| \right) ^{1/2 }\left| u\right|
^{q} dx = \int_{B_{R}^1 }\left| x\right| ^{\alpha } \left| u\right| ^{q-{\frac{1}{2}}}V\left(
\left| x\right| \right) ^{\frac{1}{2}}\left| u\right|^{\frac{1}{2}} dx $$
$$ \leq \left( \int_{B_{R}^1 } \left| x\right| ^{2\alpha }  \left| u\right| ^{ 2q-1}dx\right)
^{\frac{1}{2}}    \left( \int_{B_{R}^1 }  V\left( \left| x\right| \right) \left|
u\right| dx\right) ^{\frac{1}{2}} $$
$$\leq \frac{m^{q-1/2}}{C_1}  \left( \int_{B_{R} } \left| x\right| ^{2\alpha -\nu (2q-1)} dx\right)^{\frac{1}{2}}  \left( \int_{B_{R}}  
V\left( \left| x\right| \right) f(u)^2dx\right) ^{\frac{1}{2}}, $$
\noindent while
$$\frac{1}{\Lambda }\int_{B_{R}^2}   K\left( \left| x\right| \right) \left|
u\right| ^{q}dx \leq \int_{B_{R}^2} \left| x\right|
^{\alpha }V\left( \left| x\right| \right) ^{1/2}  |u| \left| u\right|
^{q-1} dx $$
$$\leq  \left( \int_{B_{R}^2 } \left| x\right| ^{2\alpha}  |u|^{2(q-1) } dx \right)^{1/2}
\left( \int_{B_{R}^2 }  V\left( \left| x\right| \right) \left|
u\right|^2 dx\right) ^{1/2}  \leq \frac{1}{C_{1}}  \left( \int_{B_{R}} \left| x\right| ^{2\alpha } dx \right)^{1/2}
\left( \int_{B_{R}}  V\left( \left| x\right| \right) f(u)^2 dx\right)^{1/2}  $$
$$= C(N, \alpha , \beta ) R^{ \alpha +N/2 } \left( \int_{B_{R}}  V\left( \left| x\right| \right) f(u)^2 dx\right)^{1/2} .$$
\noindent As before, the thesis follows from the two inequalities we have obtained.

\noindent \emph{Case }$1/2<\beta <1$.\ \ 
\noindent We will apply H\"{o}lder inequality with conjugate exponents $p=p'={\frac{1}{2}}$, or $p=\frac{1}{2\beta -1}>1$ and $p'=\frac{1}{2-2\beta }$. As above,
we will estimate separately the two integrals $\int_{B_{R}^1}  K\left( | x| \right) \left|
u\right| ^{q}dx $ and $\int_{B_{R}^2}   K\left( | x| \right) \left| u\right| ^{q}dx $.  
We have                                                                         
$$\frac{1}{\Lambda }\int_{B_{R}^1}   K\left( \left| x\right| \right) \left|
u\right| ^{q}dx \leq \int_{B_{R}^1} \left| x\right|
^{\alpha }V\left( \left| x\right| \right) ^{\beta }\left| u\right|
^{q} dx = \int_{B_{R}^1 }\left| x\right| ^{\alpha }V\left(
\left| x\right| \right) ^{\frac{2\beta -1}{2}}\left| u\right| ^{q-{\frac{1}{2}}}V\left(
\left| x\right| \right) ^{\frac{1}{2}}\left| u\right|^{\frac{1}{2}} dx $$
$$ \leq \left( \int_{B_{R}^1 } \left| x\right| ^{2\alpha }V\left( \left|
x\right| \right) ^{2\beta -1}\left| u\right| ^{ 2q-1}dx\right)
^{\frac{1}{2}}    \left( \int_{B_{R}^1 }  V\left( \left| x\right| \right) \left|
u\right| dx\right) ^{\frac{1}{2}} $$
$$\leq \frac{1}{C_1}  \left( \int_{B_{R}^1 } \left| x\right| ^{2\alpha }  |u|^{2q -2\beta }V\left( \left|
x\right| \right) ^{2\beta -1}\left| u\right| ^{ 2\beta-1}dx\right)^{\frac{1}{2}}  \left( \int_{B_{R}^1 }  
V\left( \left| x\right| \right) f(u)^2dx\right) ^{\frac{1}{2}} $$
$$\leq \frac{1}{C_1}  \left( \int_{B_{R}^1 } \left| x\right| ^{\frac{\alpha}{1-\beta} }  |u|^{\frac{q-\beta}{1-\beta} } dx \right)^{1-\beta}
 \left( \int_{B_{R}^1 } V\left( \left|
x\right| \right) \left| u\right| dx\right)^{\frac{2\beta -1}{2}} \left( \int_{B_{R}^1 }  
V\left( \left| x\right| \right) f(u)^2dx\right) ^{\frac{1}{2}}  $$
$$\leq \frac{1}{C_{1}^{2\beta}} \, m^{q-\beta}  \left( \int_{B_{R} } \left| x\right| ^{\frac{\alpha -\nu (q-\beta)}{1-\beta} } dx \right)^{1-\beta}
\left( \int_{B_{R}}  
V\left( \left| x\right| \right) f(u)^2dx\right) ^{\beta}.$$ 

\noindent On the other hand 
$$\frac{1}{\Lambda }\int_{B_{R}^2}   K\left( \left| x\right| \right) \left|
u\right| ^{q}dx \leq \int_{B_{R}^2} \left| x\right|
^{\alpha }V \left( \left| x\right| \right) ^{\beta }\left| u\right|
^{q} dx = \int_{B_{R}^2 }\left| x\right| ^{\alpha }V \left(
\left| x\right| \right) ^{\beta}\left| u\right| ^{2\beta } \left| u\right|^{q-2\beta} dx $$
$$\leq  \left( \int_{B_{R}^2 } \left| x\right| ^{\frac{\alpha}{1-\beta} }  |u|^{\frac{q-2\beta}{1-\beta} } dx \right)^{1-\beta}
\left( \int_{B_{R}^2 }  V\left( \left| x\right| \right) \left|
u\right|^2 dx\right) ^{\beta}  \leq \frac{1}{C_{1}^{2\beta}}  \left( \int_{B_{R}} \left| x\right| ^{\frac{\alpha}{1-\beta} } dx \right)^{1-\beta}
\left( \int_{B_{R}}  V\left( \left| x\right| \right) f(u)^2 dx\right) ^{\beta}  $$
$$= C(N, \alpha , \beta ) R^{N(1-\beta ) + \alpha } \left( \int_{B_{R}}  V\left( \left| x\right| \right) f(u)^2 dx\right) ^{\beta} .$$

\noindent As before, the thesis follows from the two inequalities that we have obtained.

\noindent \emph{Case }$\beta =1$.\ \ 
\noindent Recall that $\beta =1$ implies $q>2$ and $\alpha >0$. We have 
$$\frac{1}{\Lambda }\int_{B_{R}^1  }  K\left( \left| x\right| \right) \left|
u\right| ^{q} dx \leq \int_{B_{R}^1  } |x|^{\alpha} V\left( \left| x\right| \right)^{1/2} |u|^{q-1/2} V\left( \left| x\right| \right)^{1/2} |u|^{1/2} dx$$

$$\leq \left( \int_{B_{R}^1  } |x|^{2\alpha} V\left( \left| x\right| \right)  |u|^{2q-1} dx \right)^{1/2} \left( \int_{B_{R}^1  }  V\left( \left| x\right| \right)  |u| dx \right)^{1/2} $$
$$\leq
\frac{1}{C_1} \left( \int_{B_{R}^1  } |x|^{2\alpha} |u|^{2q-2} V\left( \left| x\right| \right)  |u| dx \right)^{1/2}
 \left( \int_{B_{R}^1  }  V\left( \left| x\right| \right)  f(u)^2 dx \right)^{1/2}  $$

$$ \leq \frac{m^{q-1}}{C^{2}_1} \left( \int_{B_{R}  } |x|^{2\alpha -2 \nu (q-1)} V\left( \left| x\right| \right) f(u)^2 dx \right)^{1/2}
 \left( \int_{B_{R}  }  V\left( \left| x\right| \right)  f(u)^2 dx \right)^{1/2} .$$

\noindent On the other hand
$$\frac{1}{\Lambda }\int_{B_{R}^2  }  K\left( \left| x\right| \right) \left|
u\right| ^{q} dx \leq \int_{B_{R}^2  } |x|^{\alpha}  |u|^{q-2} V\left( \left| x\right| \right)  |u|^2 dx \leq \frac{1}{C^{2}_1} \, R^{\alpha} \, \int_{B_{R}  }  V\left( \left| x\right| \right)  f(u)^2 dx.$$

\noindent The thesis easily follows.
\end{proof}

	The following lemma is analogous to the previous one, dealing with $B_R^{c}$ instead of $B_R$.

\begin{lem}
\label{Lem(Omega2)}Let $R_0>0$ and assume that
\[
\Lambda :=\sup_{x\in B_{R_0}^c }\frac{K\left( \left| x\right|
\right) }{\left| x\right| ^{\alpha }V\left( \left| x\right| \right) ^{\beta }%
}<+\infty \quad \text{for some }0\leq \beta \leq 1\text{~and }\alpha \in 
\mathbb{R}.
\]
Let $u\in E$ and assume that there exist $\nu , m>0$
such that 
\[
\left| u\left( x\right) \right| \leq \frac{m}{\left| x\right| ^{\nu }}\quad 
\text{almost everywhere in } B_{R_0^c} .
\]
Set $ \gamma (m) := m/R_0^{\nu}+1$. Then there exists a constant $C=C(N, R_0,\beta )>0$ such that $\forall R > R_{0}$ and $\forall q>\max \left\{ 1,2\beta \right\} $, one has 
\medskip

$\displaystyle\int_{B_R^c }K\left( \left| x\right| \right) \left| u\right| ^{q-1}\left|
h\right| dx$
\[
\leq \left\{ 
\begin{array}{ll}
\Lambda  C \gamma (m)^{2 \beta} m^{q-1} \left( \int_{B^c_R} \left| x\right| ^{\frac{
\alpha -\nu \left( q-1\right) }{N+2\left( 1-2\beta  \right) }2N}dx\right)^{
\frac{N+2\left( 1-2\beta \right) }{2N}}\left\| u\right\|^{1-2\beta } \left(\int_{B^c_R} V f(u)^2 dx  \right)^{\beta}    \quad \medskip  & 
\text{if }0\leq \beta \leq \frac{1}{2} \\ 
\Lambda  C \gamma (m)^{2 \beta} m^{q-2\beta}  \left( \int_{B^c_R }\left| x\right| ^{\frac{\alpha
-\nu \left( q-2\beta \right) }{1-\beta }}dx\right) ^{1-\beta }  \left(\int_{B^c_R} V f(u)^2 dx  \right)^{\beta} \medskip  & \text{if }\frac{1}{2}
<\beta <1 \\ 
\Lambda  C \gamma (m)^{2 } m^{q-2}  \left( \int_{B^c_R }\left| x\right| ^{2(\alpha -\nu \left(
q-2\right)) }V\left( \left| x\right| \right) f(u)^2 dx\right) ^{
\frac{1}{2}}  \left(\int_{B^c_R} V f(u)^2 dx  \right)^{\frac{1}{2}} & \text{if }\beta =1.
\end{array}
\right. 
\]
\end{lem}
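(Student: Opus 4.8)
The plan is to follow the scheme of Lemma~\ref{Lem(Omega)}, reading the left-hand side as its natural exterior analogue $\int_{B_R^c}K(|x|)|u|^q\,dx$ (the $|h|$ in the displayed integrand appears to be a slip for $|u|$, consistent with the powers $m^{q-1},m^{q-2\beta},m^{q-2}$ produced on the right). The decisive simplification with respect to the interior case is that near infinity the decay hypothesis makes $|u|$ globally bounded on $B_{R_0}^c$: since $|x|\ge R_0$ and $\nu>0$ one has $|u(x)|\le m/|x|^\nu\le m/R_0^\nu<\gamma(m)$ with $\gamma(m)>1$. This lets me replace the splitting of the domain into the two regions $\{|u|\ge1\}$ and $\{|u|<1\}$ by a single uniform pointwise estimate. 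Combining Lemma~\ref{change2}(7) (namely $f(t)^2\ge C_1^2|t|$ for $|t|\ge1$ and $f(t)^2\ge C_1^2t^2$ for $|t|\le1$) with $|u|\le\gamma(m)$ and $\gamma(m)\ge1$ yields, for a.e.\ $x\in B_{R_0}^c$,
\[
|u(x)|^2\le\frac{\gamma(m)^2}{C_1^2}\,f(u(x))^2,
\]
which is the source of the factor $\gamma(m)^{2\beta}$ (respectively $\gamma(m)^2$) in the statement.

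First I would bound $K(|x|)\le\Lambda|x|^\alpha V(|x|)^\beta$ throughout and then treat the three ranges of $\beta$ separately, in each case extracting a suitable power of $|u|$ via the decay bound $|u|\le m|x|^{-\nu}$ and converting $\bigl(V|u|^2\bigr)^\beta$ into $\bigl(Vf(u)^2\bigr)^\beta$ by the displayed inequality. For $0\le\beta\le\frac12$ I extract $|u|^{q-1}$ (legitimate since $q>1$), write the remaining factor as $|u|=|u|^{1-2\beta}|u|^{2\beta}$, and apply a three-factor Hölder inequality with exponents $p_1=\frac{2N}{N+2(1-2\beta)}$, $\frac{2^*}{1-2\beta}$ and $\frac1\beta$; the middle factor is controlled by the Sobolev estimate of Corollary~\ref{COR:est}, producing $\|u\|^{1-2\beta}$, while the last produces $\bigl(\int_{B_R^c}Vf(u)^2\bigr)^\beta$. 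A short computation gives $1/p_1=\frac{N+2(1-2\beta)}{2N}$, matching exactly the exponent displayed in the first branch.

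For $\frac12<\beta<1$ the power $1-2\beta$ is negative, so I instead extract $|u|^{q-2\beta}$ (legitimate since $q>2\beta$) and apply a two-factor Hölder inequality with conjugate exponents $\frac1{1-\beta}$ and $\frac1\beta$, the second factor again being absorbed into $\bigl(\int_{B_R^c}Vf(u)^2\bigr)^\beta$ via the uniform bound; this yields the $m^{q-2\beta}$ branch. For $\beta=1$ I split $|x|^\alpha V|u|^q=\bigl[\,|x|^\alpha|u|^{q-2}(V|u|^2)^{1/2}\bigr]\cdot\bigl[(V|u|^2)^{1/2}\bigr]$, apply Cauchy--Schwarz, extract $|u|^{2(q-2)}$ (legitimate since $q>2$), and use the uniform bound in both resulting factors, which is what produces $\gamma(m)^2$ and the final branch $m^{q-2}\bigl(\int|x|^{2(\alpha-\nu(q-2))}Vf(u)^2\bigr)^{1/2}\bigl(\int Vf(u)^2\bigr)^{1/2}$.

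I expect the only real obstacle to be bookkeeping rather than ideas: one must keep track of the differing powers of $m$ and of $\gamma(m)$ across the three cases (they arise from how much of $|u|^q$ is spent on the decay bound versus on the conversion to $f(u)^2$), and verify the Hölder exponent arithmetic, most delicately the identity $1/p_1=\frac{N+2(1-2\beta)}{2N}$ in the range $0\le\beta\le\frac12$. Unlike the interior lemma there are no boundary terms of the form $R^{N(1-\beta)+\alpha}$, precisely because the uniform bound removes the need to estimate the sub-region $\{|u|<1\}$ separately.
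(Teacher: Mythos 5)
Your proposal is correct and takes essentially the same approach as the paper's own proof: the same pointwise bound $|u|\le\gamma(m)$ on $B_{R_0}^c$ combined with property (7) of Lemma \ref{change2} to convert $V|u|^2$ into $\left(\gamma(m)/C_1\right)^2 V f(u)^2$, the same H\"older-exponent arithmetic, the Sobolev estimate of Corollary \ref{COR:est} for the $\left\| u\right\|^{1-2\beta}$ factor, and the decay bound for the powers of $m$; you also correctly read the stray $|h|$ on the left-hand side as a typo for $|u|$, which is what the paper actually estimates. The only differences are organizational: you use a single three-factor H\"older inequality on $0\le\beta\le\frac12$ (and, for $\frac12<\beta<1$, a single application with exponents $\frac{1}{1-\beta}$, $\frac{1}{\beta}$) where the paper iterates two-factor H\"older inequalities, producing identical bounds.
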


\begin{proof}

We start by noticing that, thanks to the hypotheses, we have 
$$|u(x)| \leq \frac{m}{|x|^{\nu}}  \leq \frac{m}{R_0^{\nu}} \quad\text{for all }|x| \geq R_0.$$
Since $\gamma (m)= m/R_0^{\nu} +1$, we have $|u(x)| \leq \gamma (m)$ in $B_{R_0}^c$ and $\gamma (m) \geq 1$. Recalling that $|f(t)| \geq C_1 |t|$ when $|t|\leq 1$, and that $f(t)^2$ is even and increasing on $\mathbb{R}_{+}$, for all $R\geq R_0$ we have
$$ \int_{B_R^c} V\left( \left| x\right| \right)  |u|^2 dx = \gamma(m)^2 \int_{B_R^c} V\left( \left| x\right| \right)  \left|\frac{u}{\gamma (m)}\right|^2 dx \leq \left( \frac{\gamma (m)}{C_1} \right)^2 \int_{B_R^c} 
V\left( \left| x\right| \right)  f \left(\left|\frac{u}{\gamma (m)}\right| \right)^2 dx $$
$$\leq  \left( \frac{\gamma (m)}{C_1} \right)^2 \int_{B_R^c} V\left( \left| x\right| \right)  f \left( u \right)^2 dx .$$

\noindent \emph{Case $\beta =0$.}\ \ 
\noindent Here the argument is exactly the same as in the case $\beta =0$ of the previous lemma, so we do not repeat it. We apply H\"older inequality with exponents $2^* = \frac{2N}{N-2}$ and $\frac{2N}{N+2}$, together with the standard Sobolev inequality, to get
$$
\frac{1}{\Lambda }\int_{B_R^c }K\left( \left| x\right| \right) \left|
u\right| ^{q} dx \leq m^{q-1} C \left( \int_{B_R^c }\left| x\right| ^{\frac{\alpha -\nu
\left( q-1\right) }{N+2}2N}dx\right) ^{\frac{N+2}{2N}}\left\| u\right\| .
$$

\noindent \emph{Case $0<\beta < \frac{1}{2}$. }\ \ 
Thanks to H\"older inequalities with pairs of conjugate exponents $\frac{1}{\beta}$ and $\frac{1}{1-\beta}$, and $\frac{2^* (1-\beta )}{1-2\beta}$ and 
$\frac{2N (1-\beta )}{N+2(1-2\beta )}$, we obtain
$$
\frac{1}{\Lambda }\int_{B_R^c } K\left( \left| x\right| \right) \left|
u\right| ^{q} dx \leq \int_{B_R^c } |x|^{\alpha } |u|^{q-2\beta} V\left( \left| x\right| \right)^{\beta} |u|^{2 \beta} dx$$
$$\leq \left( \int_{B_R^c }  \left( |x|^{\alpha } |u|^{q-2\beta} \right)^{\frac{1}{1-\beta}}  dx  \right)^{1-\beta}   \left( \int_{B_R^c } 
V\left( \left| x\right| \right)  |u|^2 dx \right)^{\beta} $$
$$\leq \left( \frac{\gamma (m)}{C_1} \right)^{2 \beta} \left( \int_{B_R^c } \left( |x|^{\alpha }|u|^{q-1} |u|^{1-2\beta} \right)^{\frac{1}{1-\beta}}dx\right)^{1-\beta} 
 \left( \int_{B_R^c } 
V f(u)^2 dx \right)^{\beta}$$
$$\leq \left( \frac{\gamma (m)}{C_1} \right)^{2 \beta} \left( \int_{B_R^c } \left( |x|^{\frac{\alpha}{1-\beta} }|u|^{\frac{q-1}{1-\beta}} 
\right)^{\frac{2N(1-\beta )}{N+2(1-2\beta )}}dx\right)^{\frac{N+2(1-2\beta )}{2N}} \left(\int_{B_R^c }| u|^{2^*}dx \right)^{\frac{1-2\beta}{2^*}}
\left( \int_{B_R^c } V f(u)^2 dx \right)^{\beta}$$
$$\leq \left( \frac{\gamma (m)}{C_1} \right)^{2 \beta}   C_N^{1-\beta} m^{q-1}\left( \int_{B_R^c }  |x|^{\frac{\alpha -\nu (q-1)}{N+2 (1-2\beta )}2N }
dx\right)^{\frac{N+2(1-2\beta )}{2N}} ||u||^{1-2\beta}
\left( \int_{B_R^c } V f(u)^2 dx \right)^{\beta}.
$$
The result follows with $C= C_N^{1-\beta}/C_1^{2 \beta} $.

\noindent \emph{Case $\beta = \frac{1}{2}$. }\ \ 
We have
$$
\frac{1}{\Lambda }\int_{B_R^c } K\left( \left| x\right| \right) \left|
u\right| ^{q} dx \leq \int_{B_R^c } |x|^{\alpha } |u|^{q-1} V\left( \left| x\right| \right)^{1/2} |u|dx$$
$$\leq \left( \int_{B_R^c }   |x|^{2\alpha } |u|^{2(q-1)}  dx  \right)^{1/2}   \left( \int_{B_R^c } 
V |u|^2 dx \right)^{1/2} $$
$$\leq \frac{\gamma (m)}{C_1}  \, m^{q-1}\left( \int_{B_R^c }  |x|^{2\alpha - 2 \nu (q-1)} dx \right)^{1/2} \left( \int_{B_R^c } 
V f(u)^2 dx \right)^{1/2}.$$

\noindent \emph{Case $ \frac{1}{2}< \beta < 1$. }\ \ 
\noindent We use H\"older inequality with exponents $p=p'= \frac{1}{2}$ first, and then with $p=\frac{1}{2\beta -1}$ and $p' =  \frac{1}{2 -2\beta}$. We get
$$
\frac{1}{\Lambda }\int_{B_R^c } K\left( \left| x\right| \right) \left|
u\right| ^{q} dx \leq \int_{B_R^c } |x|^{\alpha } |u|^{q} V\left( \left| x\right| \right)^{\beta} dx =
\int_{B_R^c }   |x|^{\alpha } V\left( \left| x\right| \right)^{\beta -\frac{1}{2}}|u|^{q-1} V\left( \left| x\right| \right)^{\frac{1}{2}} |u| dx $$
$$\leq \left( \int_{B_R^c }   |x|^{2\alpha } V\left( \left| x\right| \right)^{2\beta -1}|u|^{2q-2} dx  \right)^{1/2}   \left( \int_{B_R^c } 
V |u|^2 dx \right)^{1/2} $$
$$\leq \frac{\gamma (m)}{C_1} \left( \int_{B_R^c }   |x|^{2\alpha } |u|^{2(q-2\beta )}V\left( \left| x\right| \right)^{2\beta -1}|u|^{2(2\beta -1)} dx  \right)^{1/2}   \left( \int_{B_R^c } 
V f(u)^2 dx \right)^{1/2}$$
$$\leq \frac{\gamma (m)}{C_1} \left( \int_{B_R^c }   |x|^{\frac{\alpha}{1-\beta} } |u|^{\frac{q-2\beta}{1-\beta} } dx  \right)^{1-\beta} 
  \left( \int_{B_R^c } 
V\left( \left| x\right| \right) |u|^2 dx \right)^{\frac{2\beta -1}{2}}  \left( \int_{B_R^c } 
V f(u)^2 dx \right)^{1/2}$$
$$\leq \left( \frac{\gamma (m)}{C_1} \right)^{2\beta} \, m^{q-2\beta} \left( \int_{B_R^c }   |x|^{\frac{\alpha -\nu (q-2\beta )}{1-\beta} } dx  \right)^{1-\beta}\left( \int_{B_R^c } V f(u)^2 dx \right)^{\beta}.
$$

\noindent \emph{Case $ \beta = 1$. }\ \ 
\noindent In this case, hypothesis $q > \max \{ 1, 2 \beta \}$ implies $q>2$. Hence
$$
\frac{1}{\Lambda }\int_{B_R^c } K\left( \left| x\right| \right) \left|
u\right| ^{q} dx \leq \int_{B_R^c } |x|^{\alpha } |u|^{q} V\left( \left| x\right| \right) dx =
\int_{B_R^c }   |x|^{\alpha } V\left( \left| x\right| \right)^{\frac{1}{2}}|u|^{q-1} V\left( \left| x\right| \right)^{\frac{1}{2}} |u| dx $$
$$\leq \left( \int_{B_R^c }   |x|^{2\alpha } V\left( \left| x\right| \right) |u|^{2q-2} dx  \right)^{1/2}   \left( \int_{B_R^c } 
V\left( \left| x\right| \right) |u|^2 dx \right)^{1/2} $$
$$\leq \frac{\gamma (m)}{C_1} \left( \int_{B_R^c }   |x|^{2\alpha } |u|^{2(q-2 )}V\left( \left| x\right| \right)|u|^{2} dx  \right)^{1/2}   \left( \int_{B_R^c } 
V f(u)^2 dx \right)^{1/2}$$
$$\leq \left( \frac{\gamma (m)}{C_1} \right)^2 \, m^{q-2}\left( \int_{B_R^c }   |x|^{2\alpha -2\nu (q-2)} V f(u)^2 dx  \right)^{1/2}   \left( \int_{B_R^c } 
V f(u)^2 dx \right)^{1/2}. 
$$

\end{proof}

	We can now prove Theorems \ref{THM0} and \ref{THM1}.

\begin{proof}[Proof of Theorem \ref{THM0}]

Assume the hypotheses of the theorem and let $u\in E$ be
such that $\left\| u\right\| =1$. Let $0<R< R_{1}$. We
will denote by $C$ any positive constant which does not depend on $u$ and $R$.
Recalling the pointwise estimates of Corollary \ref{COR:est} and the fact that 
\[
\sup_{x\in B_{R}}\frac{K\left( \left| x\right| \right) }{\left|
x\right| ^{\alpha _{0}}V\left( \left| x\right| \right) ^{\beta _{0}}}\leq 
\sup_{r\in \left( 0,R_{1}\right) }\frac{K\left( r\right) }{%
r^{\alpha _{0}}V\left( r\right) ^{\beta _{0}}}<+\infty , 
\]
we can apply Lemma \ref{Lem(Omega)} with $R_0=R_1$, $\alpha =\alpha
_{0}$, $\beta =\beta _{0}$, $m=M\left\| u\right\| =M$ and $\nu =
\frac{N-2}{2}$. 
The argument will proceed as follows: we will distinguish several cases, as in Lemma \ref{Lem(Omega)}, and we will prove that in any case we get
\begin{equation}
\int_{B_{R}}K\left( \left| x\right| \right) \left| u\right| ^{q_{1}} dx \leq C R^{\delta}
\quad\text{for any }0<R<R_1,
\end{equation}
with $\delta >0$ and $C>0$ independent from $R$ and $u$. This clearly implies $\mathcal{S}_{0}\left( q_{1},R \right) \leq C R^{\delta}$, and hence $\lim_{R\rightarrow 0^+}\mathcal{S}_{0}\left( q_{1},R \right) =0$. Recall also that if $||u||=1$ then $\int_{\mathbb{R}^{N}} V(|x|) f(u)^2 dx \leq C$, for a suitable $C>0$ independent from $u$. 

If $\beta _{0}=0$, we get 
$$
\int_{B_{R}} K\left( \left| x\right| \right) \left| u\right|^{q_{1}} dx \leq C\left( \int_{B_{R}}| x|^{\frac{\alpha _{0}-
\nu\left( q_{1}-1\right) }{N+2 }2N }  dx\right)^{\frac{N+2 }{2N}} \leq  C\left( \int_{0}^R \rho^{\frac{\alpha _{0}-
\frac{N-2}{2}\left( q_{1}-1\right) }{N+2}2N  +N -1}
 d\rho\right)^{\frac{N+2}{2N}} ,
$$
where
$$\frac{\alpha _{0}-
\frac{N-2}{2}\left( q_{1}-1\right) }{N+2 }
2N  +N = \frac{N}{N+2} \left[ \, 2\alpha_0 - (N-2) (q_1 -1) +N+2   \,  \right] =  $$  
$$= \frac{N \left( N-2 \right)}{N+2} \left[  \frac{2\alpha_0 +2N}{N-2} - q_1    \right] =  \frac{N(N-2)}{N+2}  \left[ \, q_{0}^* (\alpha_0 , 0 )-q_1 \, \right] >0,$$
 thanks to the hypotheses. Hence, by integration and simple computations, we deduce 
$$\int_{B_{R}}K\left( \left| x\right| \right) \left| u\right| ^{q_{1}} dx 
\leq C R^{\frac{N-2}{2}\left[ q_{0}^*  ( \alpha_0 , 0 )-q_1 \right]}=C R^{\delta}.
$$

If $0<\beta _{0}<1/2$, we have 
$$
\int_{B_{R}} K\left( \left| x\right| \right) \left| u\right|^{q_{1}} dx \leq C \left[ \left( \int_{B_{R}}| x|^{\frac{\alpha _{0}-
\frac{N-2}{2}\left( q_{1}-1\right) }{(N+2) (1-\beta_0 ) }2N }  dx  \right)^{\frac{(N+2 )(1-\beta_0 ) }{2N}}  + R^{\alpha_0 +N (1-\beta_0 )} \right] ,
$$
where
$$
 \int_{B_{R}}| x|^{\frac{\alpha _{0}-
\frac{N-2}{2} \left( q_{1}-1\right) }{(N+2) (1-\beta_0 ) }2N }  dx = C \int_{0}^R \rho^{\frac{\alpha _{0}-
\frac{N-2}{2}\left( q_{1}-1\right) }{(N+2) (1-\beta_0 ) }2N +N-1}  d\rho .
$$
Now observe that
$$\frac{\alpha_0 -\frac{N-2}{2} (q_1 - 1)}{(N+2) (1-\beta_0 )}2N +N = \frac{N}{(N+2) (1-\beta_0 )} \left( 2\alpha_0 +2N -\beta_0 (N+2) - (N-2)q_1 \right)
$$
$$
=\frac{N (N-2)}{(N+2) (1-\beta_0 )} \left( \frac{2\alpha_0 +2N -\beta_0 (N+2) }{N-2} - q_1 \right) 
=\frac{N (N-2)}{(N+2) (1-\beta_0 )} \left( q_{0}^*  (\alpha_0 , \beta_0 ) - q_1 \right)  >0 ,
$$
so that
$$
 \int_{B_{R}}| x|^{\frac{\alpha _{0}-
\frac{N-2}{2} \left( q_{1}-1\right) }{(N+2) (1-\beta_0 ) }2N }  dx = C R^{\frac{N-2}{2} \left( q_{0}^*  (\alpha_0 , \beta_0 ) - q_1 \right) }.
$$
On the other hand, one has $\alpha_0 + N (1-\beta_0 ) >0$ by hypothesis. Hence as $R\rightarrow 0^+$ we have
$$
\mathcal{S}_{0}\left( q_{1},R \right) \leq C R^{\frac{N-2}{2} \left( q_{0}^*  (\alpha_0 , \beta_0 ) - q_1 \right) } + C R^{\alpha_0 + N (1-\beta_0 ) } \leq
C R^{\delta},
$$
where $\delta = \min \left\{  \frac{N-2}{2} \left( q_{0}^*  (\alpha_0 , \beta_0 ) - q_1 \right)  , \alpha_0 + N (1-\beta_0 )  \right\}>0 $.

If $\beta _{0}=1/2$, we have 
$$
\int_{B_{R}} K\left( \left| x\right| \right) \left| u\right|^{q_{1}} dx \leq C \left[ \left( \int_{B_{R}}| x|^{2\alpha _{0}-
\frac{N-2}{2}\left(2 q_{1}-1\right) }  dx  \right)^{\frac{1}{2}}  + R^{\alpha_0 + \frac{N}{2} } \right] ,
$$
where
$$
 \int_{B_{R}}| x|^{2\alpha _{0}-
\frac{N-2}{2}\left(2 q_{1}-1\right) }  dx = C \int_{0}^R  \rho^{2\alpha _{0}-
\frac{N-2}{2}\left(2 q_{1}-1\right) +N -1}d\rho 
$$                                                                                          
and 
$$
2\alpha _{0}-
\frac{N-2}{2}\left(2 q_{1}-1\right) +N = 2\alpha _{0}+
\frac{3}{2}N  -1 - (N-2)q_{1} = (N-2) \left( \frac{2\alpha _{0}+
\frac{3}{2}N  -1 }{N-2} -q_1 \right)$$
$$= (N-2)\left( q_{0}^*  \left( \alpha_0 , \frac{1}{2}\right) -q_1 \right)  >0.
$$
Hence we get
$$
 \int_{B_{R}}| x|^{2\alpha _{0}-
\frac{N-2}{2}\left(2 q_{1}-1\right) }  dx= C R^{\frac{N-2}{2}  \left( q_{0}^*  \left( \alpha_0 , 1/2  \right) -q_1 \right)  },
$$
and, recalling that $\alpha_0 + \frac{N}{2}>0 $ by hypothesis, for $R\rightarrow 0^+$ we have
$$
\mathcal{S}_{0}\left( q_{1},R \right) \leq C R^{\frac{N-2}{2} \left( q_{0}^*  (\alpha_0 , 1/2) - q_1 \right) } + C R^{\alpha_0 + \frac{N}{2} } \leq
C R^{\delta}
$$
\noindent with
$\delta = \min \left\{  \frac{N-2}{2} \left( q_{0}^*  \left( \alpha_0 , \frac{1}{2} \right) - q_1 \right)  , \alpha_0 + \frac{N}{2}   \right\}>0 $.

 If $1/2 <\beta _{0}<1 $, we have 
$$
\int_{B_{R}} K\left( \left| x\right| \right) \left| u\right|^{q_{1}} dx \leq C \left[ \left( \int_{B_{R}}| x|^{\frac{\alpha_0 - \frac{N-2}{2}(q_1 -\beta_0 )  }{1-\beta_0}}  dx  \right)^{1-\beta_0}  + R^{\alpha_0 + N(1-\beta_0 )} \right] .
$$
where
$$
 \int_{B_{R}}| x|^{\frac{\alpha_0 - \frac{N-2}{2}(q_1 -\beta_0 )  }{1-\beta_0} }  dx  = 
C \int_{0}^R \rho^{ \frac{\alpha_0 - \frac{N-2}{2}(q_1 -\beta_0 )  }{1-\beta_0} +N } d\rho 
$$
and
$$
\frac{\alpha_0 - \frac{N-2}{2}(q_1 -\beta_0 )  }{1-\beta_0} +N = \frac{1}{2(1-\beta_0 )}  \left( 2 \alpha_0 +2N -\beta_0 (N+2) - (N-2)q_1   \right)$$
$$=
\frac{N-2}{2(1-\beta_0 )} \left( \frac{2 \alpha_0 +2N -\beta_0 (N+2) }{N-2} -q_1 \right) = \frac{N-2}{2(1-\beta_0 )} 
\left( q_{0}^*  (\alpha_0 , \beta_0 ) -q_1 \right)  >0 .
$$
So
$$
 \left( \int_{B_{R}}| x|^{\frac{\alpha_0 - \frac{N-2}{2}(q_1 -\beta_0 )  }{1-\beta_0} }  dx\right)^{1-\beta_0} = 
C R^{\frac{N-2}{2} 
\left(q_{0}^*  (\alpha_0 , \beta_0 ) -q_1 \right) }.
$$
Then, as $R\rightarrow 0^+$, we obtain
$$
\mathcal{S}_{0}\left( q_{1},R \right) \leq C R^{\frac{N-2}{2} \left(q_{0}^*  (\alpha_0 , \beta_0 ) - q_1 \right) } + C R^{\alpha_0 + N(1-\beta_0 ) } \leq
C R^{\delta}
$$
with
$\delta = \min \left\{  \frac{N-2}{2} \left(q_{0}^*  \left( \alpha_0 , \beta_0 \right) - q_1 \right)  , \alpha_0 + N(1-\beta_0 )   \right\}>0 $.

 If $\beta _{0}=1 $, then we have 
$$
\int_{B_{R}} K\left( \left| x\right| \right) \left| u\right|^{q_{1}} dx \leq C \left[ \left( \int_{B_{R}}| x|^{2\alpha_0 - (N-2)(q_1 -1 ) }  V f(u)^2 dx  
\right)^{\frac{1}{2}}  + R^{\alpha_0 } \right] .
$$
\noindent Notice that $\alpha_0 > -N(1-\beta_0 )$ means $\alpha_0 >0$, since $\beta_0 =1$. Notice also that 
$$
2\alpha_0 - (N-2)(q_1 -1 )= (N-2) \left( \frac{2\alpha_0 +N -2}{N-2} -q_1 \right) = (N-2 ) \left( q_{0}^*  (\alpha_0 ,1 ) -q_1 \right)>0 
$$
implies
$$
\left( \int_{B_{R}} | x|^{2\alpha_0 - (N-2)(q_1 -1 ) }  V f(u)^2 dx  
\right)^{\frac{1}{2}} \leq R^{\alpha_0 - \frac{N-2}{2}(q_1 -1 )}  \left( \int_{B_{R}}   V f(u)^2 dx  
\right)^{\frac{1}{2}}\leq C R^{\alpha_0 - \frac{N-2}{2}(q_1 -1 )} .
$$
Hence, as $R\rightarrow 0^+ $, we get 
$$
\mathcal{S}_{0}\left( q_{1},R \right) \leq C R^{\alpha_0 - \frac{N-2}{2}(q_1 -1 )} + C R^{\alpha_0  } \leq
C R^{\alpha_0 - \frac{N-2}{2}(q_1 -1 )}
$$
with ${\alpha_0 - \frac{N-2}{2}(q_1 -1 )} =\delta >0$.

 As a conclusion, in any case, we have $\mathcal{S}_{0}\left( q_{1},R\right) \leq
CR^{\delta }$ for some $\delta =\delta \left( N,\alpha _{0},\beta_{0},q_{1}\right) >0$ and the proof is thus complete.
\end{proof}

\proof[Proof of Theorem \ref{THM1}]
Assume the hypotheses of the theorem and let $u \in E$ be
such that $\left\| u\right\| =1$. Let $R> R_{2}$. We
will denote by $C$ any positive constant which does not depend on $u$ and $R$. We will separate three different cases and we will get, in each one, an 
inequality of the following form:
$$
\int_{B_{R}^{c}}K\left( \left| x\right| \right) \left| u\right|
^{q_{2}} dx \leq C R^{\delta}
$$
with $C>0$ and $\delta <0$ independent from $R,u$. This clearly gives $\mathcal{S}_{\infty}\left( q_{2},R \right) \leq  C R^{\delta}$, and hence
$\lim_{r\rightarrow +\infty}  \mathcal{S}_{\infty}\left( q_{2},R \right) =0$.
As in the proof of the previous theorem, by pointwise estimates and the fact that 
\[
\sup_{x\in B_{R}^{c}}\frac{K\left( \left| x\right| \right) }{%
\left| x\right| ^{\alpha _{\infty }}V\left( \left| x\right| \right) ^{\beta
_{\infty }}}\leq \sup_{r>R_{2}}\frac{K\left( r\right) }{%
r^{\alpha _{\infty }}V\left( r\right) ^{\beta _{\infty }}}<+\infty , 
\]
we can apply Lemma \ref{Lem(Omega2)} with $R_0= R_2$, $\alpha
=\alpha _{\infty }$, $\beta =\beta _{\infty }$, $m=M \left\|u\right\|
=M $ and $\nu =\frac{N-2}{2}$. Recall also that $||u||=1$ inplies $\int_{\mathbb{R}^{N}}V f(u)^2 dx \leq C$ with $C$ independent from $u$. The computations of the present proof are essentially the same of those in the proof of Theorem 3 in \cite{BGR_I}: the function there called $q^*$ is the same as the function $q_{\infty}^*$ here. Hence, we will be a little sketchy here.

If $0\leq \beta _{\infty }\leq 1/2$, we get 
$$
\int_{B_{R}^{c}}K\left( \left| x\right| \right) \left| u\right|
^{q_{2}} dx \leq C\left( \int_{B_{R}^{c}}\left| x\right|
^{\frac{\alpha _{\infty }- \frac{N-2}{2} \left( q_{2}-1\right) } {N+2(1-2\beta_{\infty}) }2N}dx\right) ^{\frac{N+2(1-2\beta_{\infty}) }{2N}} 
$$
$$
=C \left( R^{\frac{2\alpha_{\infty }-4\beta_{\infty }    +2N  -(N-2)q_2}{N+2(1-2\beta_{\infty } )}N}\right)^{\frac{N+2(1-2\beta_{\infty}) }{2N}},
$$
since $2\alpha_{\infty }-4\beta_{\infty }    +2N  -(N-2)q_2=   (N-2) \left( q_{\infty}^* (\alpha_{\infty } , \beta_{\infty }  ) -q_2 \right)<0.$

On the other hand, if $1/2<\beta _{\infty }<1$, then we have 
$$
\int_{B_{R}^{c}}K\left( \left| x\right| \right) \left| u\right|
^{q_{2}} dx \leq C\left( \int_{B_{R}^{c}}\left| x\right|
^{\frac{\alpha _{\infty }-\frac{N-2}{2} \left( q_{2}-2\beta _{\infty }\right) 
}{1-\beta _{\infty }}}dx\right) ^{1-\beta _{\infty }} =
C \left( R^{ \frac{2\alpha_{\infty } -(N-2)(q_2 -2\beta_{\infty })}{2(1-\beta_{\infty })}    }  \right)^{1-\beta _{\infty }},
$$
since
$$
\frac{2\alpha_{\infty } -(N-2)(q_2 -2\beta_{\infty })}{2(1-\beta_{\infty })} =\frac{N-2}{2(1- \beta_{\infty })}\left( q_{\infty}^* (\alpha_{\infty } , \beta_{\infty }) -q_2 \right) <0.
$$                                                                                  
                                                                                   
Finally, if $\beta _{\infty }=1$, we obtain 
$$
\int_{B_{R}^{c}}K\left( \left| x\right| \right) \left| u\right|^{q_{2}} dx 
\leq C\left( \int_{B_{R}^{c}}  \left| x\right|^{2\alpha _{\infty }- (N-2)\left( q_{2}-2\right)  }V\left( \left|x\right| \right) f(u)^2dx \right)^{\frac{1}{2}} ,
$$
$$
\leq C \left( R^{2\alpha _{\infty }- (N-2)\left( q_{2}-2\right) }\int_{B_{R}^{c}} V\left( \left|x\right| \right) f(u)^2dx \right)^{\frac{1}{2}}\leq
C R^{\frac{2\alpha _{\infty }- (N-2)\left( q_{2}-2\right) }{2}},
$$
since
$$2\alpha _{\infty }- (N-2)\left( q_{2}-2\right)  = (N-2)\left( q_{\infty}^* (\alpha_{\infty}, \beta_{\infty})-q_2 \right) <0 . $$

So, in any case, we get $\mathcal{S}_{\infty }\left(
q_{2},R\right) \leq CR^{\delta }$ for some $\delta =\delta ( N,p,\alpha_{\infty },\beta _{\infty },q_{2}) <0$, 
anf this completes the proof.
\endproof


\section{Critical points in the Orlicz-Sobolev space \label{SEC: dual}}

In this section we study the relations between the equation (\ref{EQdual}) and the original equation, that is
\begin{equation}
-\Delta w+ V\left( \left| x\right| \right) w - w \left( \Delta w^2 \right)= K(|x|) g(w) \quad \text{in }\mathbb{R}^{N}  \label{EQg}
\end{equation}
where $g$ satifies the assumptions stated in Section \ref{SEC: hp}. 
For both the equations, the solutions we get must be understood in two ways, weak  and classical (in $\mathbb{R}^{N} \backslash \{ 0 \} $). As to 
(\ref{EQg}) we will get weak solutions, that is, functions $w \in X$ satisfying, for all 
$ h\in C_{\mathrm{c}, r}^{\infty }( \mathbb{R}^{N} )$,

\begin{equation}
\int_{\mathbb{R}^{N}}\left( 1+ 2w^2 \right) \nabla w\cdot \nabla h\, dx+ \int_{ \mathbb{R}^{N} }   2w |\nabla w|^2 \, h \, dx + \int_{
\mathbb{R}^{N}}V\left( \left| x\right| \right) wh\,dx=\int_{
\mathbb{R}^{N}}K(|x|) g(w)  h\,dx ,  
\label{EQweak}
\end{equation}

\noindent which is obviously a weak formulation of (\ref{EQg}). We also prove that the solutions that we get are in $C^{2}(\mathbb{R}^{N} \backslash \{ 0 \} )$ and are classical solutions of (\ref{EQg}) in $\mathbb{R}^{N} \backslash \{ 0 \} $.

%
%
%
%
%
%
%
%

As we said in the introduction, we will obtain solutions by variational techniques, studying a functional related to the original 
problem by a change of variable. Let us define $I:E\rightarrow \mathbb{R}$ by setting

\begin{equation}\label{funct}
I(u) := \frac{1}{2}\int_{\mathbb{R}^{N}}|\nabla u |^2 dx + \frac{1}{2}\int_{\mathbb{R}^{N}}V(|x|) f(u)^2  dx-\int_{\mathbb{R}^{N}}K(|x|) G(f (u)) \,dx .
\end{equation}

\noindent In the following theorem, we state the main properties of $I$.

\begin{thm} 
\label{THM:critical}
Assume $N\geq 3$ and hypothesis $\left( \mathbf{H}\right) $. Assume that $g: {\mathbb{R}} \rightarrow {\mathbb{R}}$ is a continuous function satisfying ${\bf \left( g_{1}\right) } $, ${\bf \left( g_{2}\right) } $ and $  \left( {\bf g}_{q_{1},q_{2}}\right) $ with $q_1 , q_2$ satisfying $\left( {\cal S}_{q_{1},q_{2}}^{\prime }\right) $ (see Section \ref{COMP}). Then we have:
\begin{itemize}
\item[$\bullet$] $I$ is well defined and continuous in $E$.

\item[$\bullet$] $I$ is a $C^1$ map on $E$ and, for any $u \in E$, its differential $I' (u)$ is given by
\begin{equation} 
\label{diffrechet}
I' (u) h = \int_{\mathbb{R}^{N}}\nabla u \nabla h dx + \int_{\mathbb{R}^{N}}V(|x|) f (u) f'(u) h  dx -\int_{\mathbb{R}^{N}}K(|x|) g(f (u)) f'(u) h\,dx 
\end{equation}
for all $h \in E$.

\end{itemize}

\end{thm}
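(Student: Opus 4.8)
The plan is to split $I=I_1+I_2-I_3$, where $I_1(u)=\frac12\int_{\mathbb{R}^{N}}|\nabla u|^2\,dx$, $I_2(u)=\frac12\int_{\mathbb{R}^{N}}V(|x|)f(u)^2\,dx$ and $I_3(u)=\int_{\mathbb{R}^{N}}K(|x|)G(f(u))\,dx$, and to treat the three terms separately. The functional $I_1$ is a multiple of the squared $D_r^{1,2}(\mathbb{R}^{N})$-norm, hence smooth with $I_1'(u)h=\int_{\mathbb{R}^N}\nabla u\cdot\nabla h\,dx$, while $I_2$ is finite on all of $E$ directly by the definition of $E$. For $I_3$, integrating $(\mathbf{g}_{q_1,q_2})$ yields $|G(t)|\le C\min\{|t|^{q_1},|t|^{q_2}\}$, and since $|f(u)|\le|u|$ by Lemma \ref{change2}(2) we get $|G(f(u))|\le C\min\{|u|^{q_1},|u|^{q_2}\}$; splitting $\mathbb{R}^{N}=B_{R_2}\cup B_{R_2}^c$ as in the proof of Theorem \ref{THM(cpt)}(i) and bounding $\int_{B_{R_2}}K|u|^{q_1}\,dx$ and $\int_{B_{R_2}^c}K|u|^{q_2}\,dx$ through the embedding $E\hookrightarrow L_K^{q_1}+L_K^{q_2}$ (available under $(\mathcal{S}_{q_1,q_2}')$) shows $I_3$ is well defined. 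Continuity of $I$ then follows termwise: $I_1$ is continuous trivially, $I_2$ by Lemma \ref{lem:properties}(2), and $I_3$ by continuity of the Nemytskii operator associated with $G\circ f$ together with a generalized dominated-convergence argument carried out on each of the two regions.

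For the $C^1$ regularity I would first compute the Gâteaux differential termwise and check that it equals (\ref{diffrechet}). Differentiating $\frac12 f(u+th)^2$ and $G(f(u+th))$ in $t$ produces the integrands $V f(u)f'(u)h$ and $K g(f(u))f'(u)h$; the delicate point is justifying differentiation under the integral sign for $I_2$. Writing the difference quotient and applying the mean value theorem, one controls it by $|f(\eta)f'(\eta)|\,|h|$ for an intermediate value $\eta$ with $|\eta|\le|u|+2|h|$; using Lemma \ref{change2}(9), i.e. $|f(\eta)f'(\eta)|\le 1/\sqrt{2}$, on the set $\{|u|+2|h|>1\}$ and the lower bounds $|f(t)|\ge C_1|t|$, $|f(t)|\ge C_1\sqrt{|t|}$ of Lemma \ref{change2}(7) on the complementary set, one assembles an integrable dominating function built from $Vf(u)^2$ and $Vf(h)^2$, both finite on $E$. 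An analogous but simpler argument handles $I_3$ via $(\mathbf{g}_{q_1,q_2})$, $|f'|\le1$ and Hölder's inequality on $B_{R_2}$ and $B_{R_2}^c$. The same ingredients show that (\ref{diffrechet}) is a bounded linear functional on $E$: for the potential part one splits $\{|h|\le1\}$ and $\{|h|>1\}$, uses Lemma \ref{change2}(7) to replace $|h|$ and $h^2$ by $f(h)^2$, and controls $\int_{\mathbb{R}^N}Vf(h)^2\,dx$ by $\|h\|$ through Lemma \ref{lem:properties}(1).

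Finally I would prove that $u\mapsto I'(u)$ is continuous from $E$ into $E'$, which upgrades the Gâteaux differential to the Fréchet differential and gives $I\in C^1(E)$. Given $u_n\to u$ in $E$, the kinetic part converges because $u_n\to u$ in $D_r^{1,2}(\mathbb{R}^{N})$; the potential part is handled with Lemma \ref{lem:properties}(2) together with the pointwise control of $f(u_n)f'(u_n)-f(u)f'(u)$ coming from the uniform estimates of Lemma \ref{change2}; and the nonlinear part is estimated by bounding $\|I_3'(u_n)-I_3'(u)\|_{E'}$ via Hölder on $B_{R_2}$ and $B_{R_2}^c$ and the continuity of the Nemytskii operator. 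The main obstacle is the potential term $I_2$: since $E$ is merely an Orlicz--Sobolev space, neither $\int_{\mathbb{R}^N} Vh^2\,dx$ nor $\int_{\mathbb{R}^N} V|h|\,dx$ is controlled by $\|h\|$, so \emph{every} estimate must be routed through the modular $\int_{\mathbb{R}^N}Vf(h)^2\,dx$, and the integrable domination needed for passing to the limit must be produced by splitting the domain according to the sizes of $u$ and $h$ and invoking the precise growth of $f$ in Lemma \ref{change2}, especially properties (5), (7) and (9).
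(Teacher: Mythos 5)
Your decomposition $I=I_1+I_2-I_3$ and your overall scheme (Gateaux differentiability of each piece, then continuity of the differential to upgrade to Fr\'echet $C^1$) are exactly the paper's. Your handling of $I_1$, of $I_3$ (growth of $G$, the embedding $E\hookrightarrow L_K^{q_1}+L_K^{q_2}$ under $(\mathcal{S}_{q_1,q_2}^{\prime})$, Nemytskii continuity --- the paper outsources this to \cite{BPR}), and of the Gateaux derivative and boundedness of $I_2'(u)$ is sound: for a \emph{fixed} pair $(u,h)$, the majorant $C\,V\bigl(f(u)^2+f(h)^2\bigr)$ you assemble from properties (7) and (9) of Lemma \ref{change2} is integrable, and the mean-value argument goes through.

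The gap is in the decisive last step, the continuity of $I_2'\colon E\to E'$, i.e. $\sup_{\|h\|\le 1}\int_{\mathbb{R}^N} V\,|D_n|\,|h|\,dx\to 0$ with $D_n:=f(u_n)f'(u_n)-f(u)f'(u)$. Your mechanism --- dominated convergence with a majorant built from $Vf(u_n)^2$ and $Vf(h)^2$ after splitting by the sizes of $u$ and $h$ --- produces a dominating function that \emph{depends on $h$}; this suffices for each fixed $h$ (which is why the Gateaux step works) but not for the supremum over the unit ball. Concretely, on $\{|h|\le 1\}$ one can indeed decouple $h$: $|h|\le |f(h)|/C_1$ and Cauchy--Schwarz in $L^2(V\,dx)$ give the bound $C\bigl(\int_{\mathbb{R}^N}V D_n^2\,dx\bigr)^{1/2}$, uniform in $h$ since $\sup_{\|h\|\le1}\int V f(h)^2\,dx\le C$ by Lemma \ref{lem:properties}(1), and this tends to $0$ by dominated convergence using Lemma \ref{lem:properties}(2). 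But on $\{|h|>1\}$ neither of your tools closes the estimate: the crude bound $|D_n|\le\sqrt 2$ from Lemma \ref{change2}(9) only yields $\int_{\{|h|>1\}}V|h|\,dx\le C_1^{-2}\int V f(h)^2\,dx\le C$, which is bounded but not small, while H\"older against $f(h)^2$ leads back to the same $h$-dependent quantity, so smallness of $D_n$ cannot be exploited uniformly in $h$. What is needed here --- and what your sketch never invokes --- is the radial decay estimate $|h(x)|\le C_N\|h\|\,|x|^{-(N-2)/2}$ of Corollary \ref{COR:est}, which shows that $\{|h|>1\}$ lies in a \emph{fixed} ball for all $\|h\|\le1$, combined with the quantitative part of hypothesis $(\mathbf{H})$, $V(r)\le C r^{-2}$ near the origin: together they give $V|D_n||h|\le C|D_n|\,|x|^{-N/2-1}$ there, an $h$-independent $L^1$ majorant to which dominated convergence applies uniformly. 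This is precisely the paper's argument (phrased as a splitting of space into $B_1$ and $B_1^c$ rather than by the size of $|h|$). The fact that your proposal never uses the bound $V(r)\le Cr^{-2}$ anywhere is a telltale sign that the hardest point of the proof has been bypassed rather than solved.
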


\begin{proof}

Let us define
$$ I_1 (u)  =    \frac{1}{2} \int_{\mathbb{R}^{N}} |\nabla u |^2 dx, \quad I_2 (u)  = \frac{1}{2} \int_{\mathbb{R}^{N}}V(|x|) f(u)^2  dx , \quad I_3 (u) = 
\int_{\mathbb{R}^{N}}K(|x|) G(f (u))  \,dx .$$
and study these three functionals.
\par \noindent As to $I_1$, it is a standard task to get that $I_1$ is $C^1$ in $E$ with differential given by $I'_1 (u)h= \int_{\mathbb{R}^{N}}\nabla u \nabla h dx $. 
\par \noindent As to $I_3$, we notice that, setting $h(x,t)= K(|x|) G(f(t))$, we have $h(x,t) = \int_{0}^{t} K(|x|) g(f (s)) f'(s) ds$ and 
$$\left|  K(|x|) g(f (t)) f'(t) 	\right| \leq C K(|x|) \min \left\{ \left| f(t)\right|
^{q_{1}-1},\left| f(t) \right| ^{q_{2}-1}\right\} \leq C  K(|x|)\min \left\{ \left| t\right|
^{q_{1}-1},\left| t\right| ^{q_{2}-1}\right\} .$$
Then we can apply the results in \cite{BPR} (in particular Proposition 3.8) and the fact that $E \hookrightarrow L_{K}^{q_{1}}(\mathbb{R}^{N})+L_{K}^{q_{2}}(\mathbb{R}^{N})$  (see Theorem \ref{THM(cpt)}), to get that also $I_3$ is $C^1$ in $E$, with differential given by 
$$I'_3 (u)h = \int_{\mathbb{R}^{N}}K(|x|) g(f (u)) f'(u) h\,dx.$$
As to $I_2$, we can  repeat the arguments of proposition (2.9) of \cite{Uberlandio1}, which work also in our hypotheses, to get that $I_2$ is well defined, continuous and Gateaux differentiable, with differential $I'_2$ given by
$$  I'_2 (u) h = \int_{\mathbb{R}^{N}}V(|x|) f (u) f'(u) h  dx .$$
In order to conclude, we need to prove that the map $I'_2 : E \rightarrow E' $ is continuous. Let $\{ u_n \}_n$ be a sequence in $E$ with $u_n \rightarrow u$ in $E$. Define
$$ \alpha_n = ||I'_2(u_n ) - I'_2(u)||_{E'}= \sup_{||h||\leq 1} \left| \left( I'_2 (u_n ) - I'_2 (u) \right) h \right| = 
\sup_{||h||\leq 1} \left|  \int_{\mathbb{R}^{N}} V(|x|) \left( f(u_n ) f' (u_n ) - f (u) f'(u) \right)  h  dx \right|.$$ 
We claim that $\alpha_n \rightarrow 0$. In proving this, we will use $C$ to indicate different positive constants, that can change from line to line but are independent from $h$ and $n$. We notice as first thing that $(1)$ of Lemma \ref{lem:properties} implies 
$$\sup_{||h||\leq 1}\left\{ \int_{\mathbb{R}^{N}} V(|x|) f(h)^2 dx \right\} \leq C .$$
Then we compute
$$\left|  \int_{\mathbb{R}^{N}} V(|x|) \left( f(u_n ) f' (u_n ) - f (u) f'(u) \right)  h  dx \right| \leq   \int_{B_1} V(|x|) \left| f(u_n ) f' (u_n ) - f (u) f'(u) \right|  |h  | dx $$
$$+   \int_{B_1^c} V(|x|) \left| f(u_n ) f' (u_n ) - f (u) f'(u) \right| | h | dx .$$
Recalling Corollary \ref{COR:est}, we get $|h(x)| \leq C$ in $B_1^c $ for all $h$ with $||h|| \leq 1$, and we can assume $C>1$. Hence from (7) of Lemma \ref{change2} we derive
$$ |h(x) | = C \, \frac{|h(x)|}{C}  \leq C f \left(\frac{|h(x)|}{C} \right)  \leq C f \left(|h(x)| \right) .$$
From this, applying H\"older inequality, we get
$$\int_{B_1^c} V(|x|) \left| f(u_n ) f' (u_n ) - f (u) f'(u) \right| | h | dx \leq 
C \int_{B_1^c} V(|x|) \left| f(u_n ) f' (u_n ) - f (u) f'(u) \right| f \left(|h(x)| \right) dx $$
$$\leq
C \left( \int_{B_1^c} V(|x|) \left| f(u_n ) f' (u_n ) - f (u) f'(u) \right|^2  dx \right)^{1/2} \, 
\left( \int_{B_1^c} V(|x|) f\left(|h(x)| \right)^2 dx \right)^{1/2} $$
$$\leq
C  \left( \int_{B_1^c} V(|x|) \left| f(u_n ) f' (u_n ) - f (u) f'(u) \right|^2  dx \right)^{1/2} $$
As $u_n \rightarrow u$ in $D_r^{1,2} \left(\mathbb{R}^{N}  \right) $, we can assume, up to a subsequence, that $u_n (x) \rightarrow u(x)$ for a.e. $x \in 
\mathbb{R}^{N} $. Also, from $(2)$ of Lemma \ref{lem:properties}, we deduce that $V^{\frac{1}{2}} f(u_n ) \rightarrow V^{\frac{1}{2}} f(u) $ in $L^2 (\mathbb{R}^{N})$ and hence, up to a subsequence, we can assume $V f(u_n )^2 \leq k \in L^1 (\mathbb{R}^{N}) $. Hence we have $V(|x|) \left| f(u_n ) f' (u_n ) - f (u) f'(u) \right| \rightarrow 0$ a.e. and
$$V(|x|) \left| f(u_n ) f' (u_n ) - f (u) f'(u) \right|^2 \leq C V(|x|) \left[ f(u_n )^2 f' (u_n )^2 + f(u)^2 f'(u)^2 \right ] $$
$$\leq C k + C V f(u)^2 \in  L^1 (\mathbb{R}^{N}),$$
\noindent so, by Dominated Convergence Theorem, we have 
$$\int_{B_1^c} V(|x|) \left| f(u_n ) f' (u_n ) - f (u) f'(u) \right|^2  dx  \rightarrow 0$$
\noindent which implies 
$$\sup_{||h||\leq 1} \int_{B_1^c} V(|x|) \left| f(u_n ) f' (u_n ) - f (u) f'(u) \right| | h | \, dx \rightarrow 0 .$$
\noindent On the other hand, by the hypothesis on $V$ and Lemma \ref{COR:est}, we get
$$
\int_{B_1} V(|x|) \left| f(u_n ) f' (u_n ) - f (u) f'(u) \right| \, |h|\, dx \leq C \int_{B_1} \frac{1}{|x|^2}\left| f(u_n ) f' (u_n ) - f (u) f'(u) \right|
\frac{1}{|x|^{  \frac{N-2}{2}  }}  \, dx =$$
$$ C \int_{B_1} \left| f(u_n ) f' (u_n ) - f (u) f'(u) \right|
\frac{1}{|x|^{\frac{N}{2}+1}} \, dx .
$$
\noindent As $N/2+1 <N$ we have $|x|^{-N/2-1} \in L^1 (B_1 )$, while $\left| f(u_n ) f' (u_n ) - f (u) f'(u) \right|\rightarrow 0$ a.e. in $\mathbb{R}^N$ and 
$\left| f(u_n ) f' (u_n ) - f (u) f'(u) \right| \leq C $ because of (9) of  Lemma \ref{change2}. Again by Dominated Convergence Theorem we get 
$$\int_{B_1} \left| f(u_n ) f' (u_n ) - f (u) f'(u) \right|
\frac{1}{|x|^{\frac{N}{2}+1}} dx \rightarrow 0$$
\noindent and hence 
$$\sup_{||h||\leq 1} \int_{B_1} V(|x|) \left| f(u_n ) f' (u_n ) - f (u) f'(u) \right| | h | \, dx \rightarrow 0 .$$
\noindent This holds for a subsequence of any sequence $u_n \rightarrow u$, and from this it is easy to get the thesis.
\end{proof}

According to the above result, a critical point $u$ of $I$ satisfies $I'(u)h=0$, that is
\begin{equation}
\label{EQweakdual}
\int_{\mathbb{R}^{N}}\nabla u \nabla h \, dx + \int_{\mathbb{R}^{N}}V(|x|) f (u) f'(u) h \,  dx -\int_{\mathbb{R}^{N}}K(|x|) g(f (u)) f'(u) h\,dx =0
\end{equation}
for all $h \in E$. This is, of course, a weak formulation of equation (\ref{EQdual}). We now want to show that a critical point $u$ of $I$ is a classical solution of equation (\ref{EQdual}) in $ \mathbb{R}^{N} \backslash \{ 0 \}$.

\begin{thm} \label{THM:u_classical}
	Assume the hypotheses of Theorem \ref{THM:critical}. Let $u$ be a critical point of $I$. Then $u \in  C^2 ( \mathbb{R}^{N} \backslash \{ 0 \} )$ and $u$ is a classical solution of equation (\ref{EQdual}) in $ \mathbb{R}^{N} \backslash \{ 0 \}$.

\end{thm}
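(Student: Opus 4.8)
The plan is to use the radial symmetry of $u$ to reduce the weak equation (\ref{EQweakdual}) to a one-dimensional singular ODE on $(0,+\infty)$, where the mere continuity of $V$ and $K$ is enough to bootstrap up to $C^2$; this sidesteps the failure of Schauder theory for the PDE itself. Since $u\in E\subseteq D_r^{1,2}(\mathbb{R}^N)$ is radial, write $u(x)=v(|x|)$. First I would check that $v$ is continuous on $(0,+\infty)$: on every $[a,b]\subset(0,+\infty)$ the weight $r^{N-1}$ is bounded away from $0$ and $\infty$, so the finiteness of $\int_{\mathbb{R}^N}|\nabla u|^2\,dx$ together with $u\in L^2_{\mathrm{loc}}(\mathbb{R}^N\setminus\{0\})$ (Corollary \ref{COR:est}) yields $v\in W^{1,2}((a,b))$, and hence $v\in C^0((0,+\infty))$ by the one-dimensional Sobolev embedding.

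Next I would rewrite (\ref{EQweakdual}) in radial form. Inserting test functions $h(x)=\psi(|x|)$ with $\psi\in C_{\mathrm{c}}^\infty((0,+\infty))$ --- these belong to $E$, being smooth, radial and supported where $V$ is bounded --- and passing to polar coordinates turns (\ref{EQweakdual}) into
\begin{equation*}
\int_0^{+\infty}\left[\,v'(r)\psi'(r)+W(r)\psi(r)\,\right]r^{N-1}\,dr=0\qquad\text{for all }\psi\in C_{\mathrm{c}}^\infty((0,+\infty)),
\end{equation*}
where $W(r):=V(r)f(v)f'(v)-K(r)g(f(v))f'(v)$. Equivalently, $(r^{N-1}v')'=r^{N-1}W(r)$ in the sense of distributions on $(0,+\infty)$.

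Now comes the bootstrap. Since $v$ is continuous and $V,K,g,f,f'$ are continuous, the map $r\mapsto r^{N-1}W(r)$ is continuous on $(0,+\infty)$. A distribution whose derivative is a continuous function is a $C^1$ function, so $r^{N-1}v'\in C^1((0,+\infty))$; dividing by the smooth, strictly positive weight $r^{N-1}$ gives $v'\in C^1((0,+\infty))$, i.e. $v\in C^2((0,+\infty))$, whence $u\in C^2(\mathbb{R}^N\setminus\{0\})$. With $v\in C^2$ the distributional identity holds pointwise, and rewriting it as $-\tfrac{1}{r^{N-1}}(r^{N-1}v')'=K g(f(v))f'(v)-Vf(v)f'(v)$ is exactly $-\Delta u+V f(u)f'(u)=K g(f(u))f'(u)$ at every $x\neq0$; thus $u$ solves (\ref{EQdual}) classically in $\mathbb{R}^N\setminus\{0\}$.

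The step I expect to be the real obstacle is precisely the one the ODE reduction is designed to bypass. A direct PDE argument on a ball $B$ with $\overline{B}\subset\mathbb{R}^N\setminus\{0\}$ does get started --- the pointwise bound of Corollary \ref{COR:est} and the continuity of $V,K$ put the right-hand side of (\ref{EQdual}) in $L^\infty(B)$, so interior $W^{2,p}$ estimates give $u\in C^{1,\alpha}_{\mathrm{loc}}$ --- but it stalls there, because upgrading to $C^2$ via Schauder would require Hölder continuity of $V(|x|)$, $K(|x|)$ and $g$, which is not assumed (indeed $-\Delta u=h$ with $h$ merely continuous need not admit $C^2$ solutions). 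Exploiting radial symmetry replaces the Laplacian by an operator that can be integrated twice against a merely continuous source, and this is what delivers the $C^2$ conclusion. A minor point to verify carefully is that testing against radial $h$ loses no information here, which holds because $u$ and all coefficients are radial, and that the compositions $f(u),f'(u),g(f(u))$ stay within the framework in which (\ref{EQweakdual}) was established.
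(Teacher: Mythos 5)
Your proof is correct, and it shares the paper's overall skeleton --- pass to the radial profile via radial test functions and treat the problem as a one-dimensional weighted ODE on $(0,+\infty)$, precisely to sidestep Schauder theory --- but your key regularity step runs on a genuinely different and more elementary mechanism. The paper works on each compact interval $I=(a,b)$ in three stages: (i) the profile lies in $H^1(I)$; (ii) it lies in $H^2(I)$, obtained through a rather delicate construction of admissible test functions (from $\varphi\in C_{\mathrm c}^{\infty}(I)$ one builds $w$ with $w'=\varphi' r^{1-N}-\overline{v}\,\psi'$, corrected by a cutoff $\psi$ so as to be compactly supported, and derives the duality bound $\left|\int_I u'\varphi'\,dr\right|\leq C\|\varphi\|_{L^2(I)}$, whence the derivative of the profile is in $H^1(I)$); (iii) a standard bootstrap from $H^2$ to $C^2$. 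You instead keep the weight inside the derivative: since the profile $v$ is continuous (your $W^{1,2}_{\mathrm{loc}}$ observation) and $V,K,g,f,f'$ are continuous, the locally integrable function $r^{N-1}v'$ has continuous distributional derivative, hence is $C^1$, and $v\in C^2$ follows in one stroke, with the classical ODE and then (\ref{EQdual}) read off pointwise. Your route buys simplicity: it eliminates the paper's $H^2$ stage and its ad hoc test functions altogether, handling the singular weight by never dividing it out under the integral sign. What the paper's Hilbert-space route buys is robustness: its step (ii) only uses that the source term $\eta$ is in $L^{\infty}(I_{\delta})$, so it would survive with merely bounded (rather than continuous) data, a generality your continuity-based integration does not give but which is not needed under hypothesis $(\mathbf{H})$. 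The only points you should make explicit are the standard ones you already flag: that for $u\in D_r^{1,2}(\mathbb{R}^N)$ the weak derivative of the profile on $(0,+\infty)$ is the radial component of $\nabla u$ (by density of $C_{\mathrm{c},r}^{\infty}(\mathbb{R}^N)$), and that the test functions $x\mapsto\psi(|x|)$ with $\psi\in C_{\mathrm c}^{\infty}((0,+\infty))$ indeed belong to $E$ --- both are immediate and are used implicitly by the paper as well.
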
 	
\begin{proof} We deal with radial functions and for them, with a little abuse of notation, we will write $u(x) = u(|x|) = u(r)$ for $r= |x|$, so identifying $u$ with a function defined a.e. on $\mathbb{R}_+$. Using  this trick, the integral equation (\ref{EQweakdual}) becomes an integral equation in dimension 1, that is
\begin{equation}
\label{EQweakdualone}
\int_{0}^{+\infty}  u' (r) h' (r) \, r^{N-1} dr + \int_{0}^{+\infty}  V(r) f (u(r)) f'(u(r)) h(r) \,   r^{N-1} dr
\end{equation}
$$
-\int_{0}^{+\infty}  K(r) g(f (u(r) )) f'(u(r) ) 
h(r) \, r^{N-1} dr =0
$$
for all $h \in  E$. Of course equation (\ref{EQweakdualone}) can be considered as a weak formulation of the following ODE:
\begin{equation}
\label{EQdualone}
u'' + \frac{N-1}{r} u' + V(r) f(u) f'(u) - K(r) g (f(u)) f'(u) =0 \quad {\mbox {in}} \quad \mathbb{R}_+.
\end{equation}
We will now prove that $u$ is a classical solution of (\ref{EQdualone}). To be precise, we will prove the following claim.
\begin{quote}
{\it Claim:}
fix any $0<a<b< + \infty$ and let $I=(a,b)$. Then $u \in C^2 (I)$ and $u$ is a classical solution of (\ref{EQdualone}) in $I$. 
\end{quote}

\noindent The proof of the claim in divided in three steps: 
	\par \noindent (i) $u \in H^1 (I)$;
		\par \noindent (ii) $u\in H^2 (I)$;
		\par \noindent (iii) $u$ is a classical solution of (\ref{EQdualone}) in $I$.
	
\par \noindent Step (i) is easily obtained with the same argument of Lemma 27 in \cite{BGR_II}. 
We now prove (ii). Let us take any $\varphi \in C_{\mathrm{c}}^{\infty }(I)$. Take $\delta >0$ such that $a-\delta >0$ and define $I_{\delta} = (a-\delta , b+\delta )$. Of course $\varphi \in C_{\mathrm{c}}^{\infty }(I_{\delta})$. Let $\psi \in C^{\infty}(\mathbb{R})$ such that $0\leq \psi \leq 1$, $\psi (r) =0$ if $r\leq a -\delta/2$ and $\psi (r) =1$ if $r\geq a -\delta/3$. Define also
	$$
	v(r) := \int_{a}^{r} \frac{ \varphi '(s)}{s^{N-1}}\, ds = \frac{ \varphi (r)}{r^{N-1}}+ (N-1) \int_{a}^{r} \frac{ \varphi (s)}{s^{N}}\, ds.
	$$ 
Notice that $v \in C^{\infty}(I_{\delta})$ and $v(r)= 0 $ if $r \in (a-\delta , a)$. Let $\varepsilon >0$ be such that supp\,$\varphi \subset (a+ \varepsilon, b- \varepsilon)$. Then for $r \in (b-\varepsilon , b+ \delta)$ one has	
	$$
	v(r)= v(b)= (N-1)\int_a^{b} \frac{ \varphi (s)}{s^{N}}\, ds =: {\overline v} . 
	$$
Define now $w(r) = v(r) -{\overline v} \psi (r)$. Then $w \in  C^{\infty}(I_{\delta})$ and supp\,$w \subseteq [a -\frac{\delta}{2} , b- \varepsilon ]$, whence $w \in C_{\mathrm{c}}^{\infty }(I_{\delta})$. Hence by the equation  (\ref{EQweakdualone}) we easily get that
	$$
	\int_{I_{\delta}} u' (r) w' (r) r^{N-1} dr = \int_{I_{\delta}} \eta (r) w(r) dr, 
	$$
where $\eta \in L^{\infty} (I_{\delta})$. As $w' (r) = \varphi ' (r) r^{1-N}- {\overline v} \psi ' (r)$, from the above equation we deduce
	$$
	\int_{I} u' (r) \varphi ' (r)  dr = \int_{I_{\delta}} \eta (r) w(r) dr +	{\overline v} \int_{I_{\delta}} u' (r) \psi ' (r) \, r^{N-1} dr =
	\int_{I_{\delta}} \eta (r) \frac{ \varphi (r)}{r^{N-1}} dr $$
	$$+(N-1) \int_{I_{\delta}} \eta (r)  \left( \int_{a}^{r} \frac{ \varphi (s)}{s^{N}}\, ds \right) dr  + {\overline v} \int_{I_{\delta}} u' (r) \psi ' (r)  
	\, r^{N-1}  dr .
	$$
It is easy to see that the following estimates hold, with a constant $C>0$ depending on $u$ but not on $\varphi$:
	$$\left| 	\int_{I_{\delta}} \eta (r) \frac{ \varphi (r)}{r^{N-1}} dr \right| \leq C || \varphi ||_{L^2 (I)}, \quad 
	\left| (N-1) \int_{I_{\delta}} \eta (r)  \left( \int_{a}^{r} \frac{ \varphi (s)}{s^{N}}\, ds \right) dr   \right| \leq C || \varphi ||_{L^2 (I)}, $$
	$$
	\left|{\overline v} \int_{I_{\delta}} u' (r) \psi ' (r)  dr \right|  \leq C || \varphi ||_{L^2 (I)}.
	$$
The last inequality derives from the definition of ${\overline v}$ and the fact that $\left| \int_{I_{\delta}} u' (r) \psi ' (r)  dr \right|\leq C ||u||$, where $||u||$ is the norm of $u$ in $E$. Then we get
	$$
	\left| \int_{I} u' (r) \varphi ' (r)  dr \right| \leq C || \varphi ||_{L^2 (I)}.
	$$
As this holds for every $\varphi   \in C_{\mathrm{c}}^{\infty }(I)$, standard Sobolev space theory gives $u' \in H^1 (I)$ and hence $u \in H^2 (I)$.
As to (iii), once we have $u \in H^2 (I)$, it is a standard task to get that $u \in C^2 (I)$ and that the equation (\ref{EQdualone}) is satisfied in the classical sense in $I$.
This concludes the proof of the claim.
	
	Now it is easy to get the thesis of the theorem. The claim holds for every $I= (a,b)$ with $0<a<b<+\infty$, hence we have that $u \in  C^2 ( \mathbb{R}_+)$ and that it satisfies equation (\ref{EQdualone}) in $\mathbb{R}_+$. Coming back to dimension $N$, it is then obvious that $u \in C^2 ( \mathbb{R}^N \backslash \{ 0 \} )$ and 	
	\begin{equation}
	-\Delta u + V(|x|) f(u) f'(u) = K(|x|) g(f(u)) f'(u)
	\quad\text{in }\mathbb{R}^N \backslash \{ 0 \}.
	\end{equation}
	
\end{proof}

Now we show that $w= f(u)$ is a classical solution of equation (\ref{EQg}) in $\mathbb{R}^{N} \backslash \{ 0\}$.

\begin{thm}
Assume the hypotheses of Theorem \ref{THM:critical}. Let $u\in E$ be a critical point of $I$ and set $w= f(u)$. Then $w \in C^2 \left(\mathbb{R}^{N} \backslash \{ 0\} \right) $ and it is a classical solution of equation (\ref{EQg}) in $\mathbb{R}^{N} \backslash \{ 0\}$.

\end{thm}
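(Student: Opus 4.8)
The plan is to reduce the statement entirely to the regularity and equation already established for $u$. Since $f\in C^\infty(\mathbb{R})$ by Lemma \ref{change2} and, by Theorem \ref{THM:u_classical}, $u\in C^2(\mathbb{R}^N\setminus\{0\})$ is a classical solution of (\ref{EQdual}) there, the composition $w=f(u)$ is automatically of class $C^2$ on $\mathbb{R}^N\setminus\{0\}$ by the chain rule. Thus the only genuine content is a pointwise algebraic identity converting the $u$-equation into the $w$-equation, which I would verify by a direct computation at each $x\neq 0$.

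First I would record the two facts about $f$ that are needed, both consequences of the Cauchy problem (\ref{eq:change}). Squaring the defining ODE gives $f'(t)^2(1+2f(t)^2)=1$, hence $1+2f(t)^2=1/f'(t)^2$; differentiating this relation and dividing by $2f'(t)$ (which never vanishes) yields $f''(t)(1+2f(t)^2)=-2f(t)f'(t)^2$. Next, by the chain rule, $\nabla w=f'(u)\nabla u$ and $\Delta w=f'(u)\Delta u+f''(u)|\nabla u|^2$, while, writing $w^2=f(u)^2$, $\Delta(w^2)=2f(u)f'(u)\Delta u+2(f'(u)^2+f(u)f''(u))|\nabla u|^2$. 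Substituting these expressions into $-\Delta w-w\,\Delta(w^2)$ and collecting terms, the coefficient of $\Delta u$ equals $-f'(u)(1+2f(u)^2)=-1/f'(u)$ by the first identity, while the coefficient of $|\nabla u|^2$ equals $-f''(u)(1+2f(u)^2)-2f(u)f'(u)^2$, which vanishes identically by the second. Hence $-\Delta w-w\,\Delta(w^2)=-\Delta u/f'(u)$ at every $x\neq 0$.

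Finally I would invoke the classical equation (\ref{EQdual}) for $u$: since $-\Delta u=-V(|x|)f(u)f'(u)+K(|x|)g(f(u))f'(u)$, dividing by $f'(u)$ gives $-\Delta u/f'(u)=-V(|x|)f(u)+K(|x|)g(f(u))=-V(|x|)w+K(|x|)g(w)$. Combining this with the previous step produces exactly $-\Delta w+V(|x|)w-w\,\Delta(w^2)=K(|x|)g(w)$ pointwise on $\mathbb{R}^N\setminus\{0\}$, which is (\ref{EQg}) in the classical sense. There is no analytic obstacle here; the only point requiring care is the bookkeeping that forces the $|\nabla u|^2$ terms to cancel, and this cancellation is precisely where the special structure of the change of variable $f$, through the identity $f''(t)(1+2f(t)^2)=-2f(t)f'(t)^2$, is essential.
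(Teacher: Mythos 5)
Your proposal is correct and follows essentially the same route as the paper: both deduce $w\in C^2(\mathbb{R}^N\setminus\{0\})$ from Theorem \ref{THM:u_classical} and the smoothness of $f$, establish the pointwise identity $\Delta w + w\,\Delta(w^2)=\Delta u/f'(u)$, and then substitute the classical equation for $u$. The only difference is that you spell out the "direct computations" the paper leaves implicit, in particular the identities $1+2f^2=1/(f')^2$ and $f''(1+2f^2)=-2f(f')^2$ that force the $|\nabla u|^2$ terms to cancel, which is a welcome addition rather than a deviation.
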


\begin{proof}
From Theorem \ref{THM:u_classical} and the fact that $f\in C^{\infty}$, it is obvious that $ w \in C^2 \left(\mathbb{R}^{N} \backslash \{ 0\} \right) $. Direct computations then show that
	$$ \Delta w + w \Delta \left( w^2 \right) = \frac{1}{f'(u)}\Delta u .$$
It is then easy to get the result by substituting in equation (\ref{EQdualone}).
\end{proof}

	To complete our analysis, we now prove that if $u$ is a critical point of $I$ and $w=f(u)$, then $w$ also satisfies (\ref{EQweak}), that is, $w$ is a weak solution of (\ref{EQg}).

\begin{thm}

Assume the hypotheses of Theorem \ref{THM:critical}. Let $u\in E$ be a critical point of $I$ and set $w= f(u)$. Then $w =f(u) \in X$ and for all $h \in C_{\mathrm{c}, r}^{\infty }( \mathbb{R}^{N} ) $ one has
\begin{equation}
\int_{\mathbb{R}^{N}}  \left( 1+ 2w^2 \right) \nabla w\cdot \nabla h\, dx+ \int_{\mathbb{R}^{N}}   2w |\nabla w|^2 \, h \, dx + \int_{
\mathbb{R}^{N}}V\left( \left| x\right| \right) wh\,dx=\int_{
\mathbb{R}^{N}}K(|x|) g(w)  h\,dx 
\label{EQweak2}
\end{equation}

\end{thm}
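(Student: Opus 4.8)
The plan is to obtain (\ref{EQweak2}) from the weak equation (\ref{EQweakdual}) for the critical point $u$, by testing it against the specific function
$$\phi := \frac{h}{f'(u)} = h\sqrt{1+2f(u)^2},$$
and to treat the membership $w=f(u)\in X$ separately. The latter is immediate: since $\nabla w = f'(u)\nabla u$ and $|f'|\le 1$ by (1) of Lemma \ref{change2}, one has $|\nabla w|\le|\nabla u|$, so $w$ inherits finite Dirichlet energy and radial symmetry from $u\in D_r^{1,2}\left(\mathbb{R}^{N}\right)$, giving $w\in D_r^{1,2}\left(\mathbb{R}^{N}\right)$; moreover $\int_{\mathbb{R}^N}V w^2\,dx = \int_{\mathbb{R}^N}V f(u)^2\,dx<\infty$ because $u\in E$, so $w\in X$.

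The heart of the argument is to check that $\phi\in E$, so that it is an admissible test function. Differentiating and using $\frac{d}{dt}\bigl(1/f'(t)\bigr)=2f(t)f'(t)^2$ gives
$$\nabla\phi = \frac{1}{f'(u)}\nabla h + 2 f(u) f'(u)^2\,h\,\nabla u.$$
The second summand is bounded pointwise by $\sqrt2\,|h|\,|\nabla u|$, thanks to $|ff'|\le 1/\sqrt2$ and $|f'|\le1$ (properties (9) and (1) of Lemma \ref{change2}), hence it lies in $L^2$. For the first summand and for $\phi$ itself I would combine $f(u)^2\le\sqrt2\,|u|$ (property (6)) with the pointwise decay $|u(x)|\le C_N\|u\|\,|x|^{-(N-2)/2}$ of Corollary \ref{COR:est}, which shows $1+2f(u)^2\lesssim|x|^{-(N-2)/2}$ near the origin, integrable over $B_1$ since $N\ge3$; as $h$ is smooth with compact support, this yields $\phi,\nabla\phi\in L^2(\mathbb{R}^N)$, and being radial with compact support $\phi\in D_r^{1,2}\left(\mathbb{R}^{N}\right)$. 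Finally $\int_{\mathbb{R}^N}V f(\phi)^2\,dx\le\int_{\mathbb{R}^N}V\phi^2\,dx$ by (2) of Lemma \ref{change2}, and this is finite because near the origin $V(r)\le C/r^2$ and $\phi^2\lesssim|x|^{-(N-2)/2}$, so the integrand is $\lesssim|x|^{-(N+2)/2}\in L^1(B_1)$ for $N\ge3$, whereas away from the origin all factors are bounded on the compact support of $h$. Hence $\phi\in E$.

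It then remains to substitute $\phi$ into (\ref{EQweakdual}) and simplify with the identities $1+2w^2=1/f'(u)^2$ and $\nabla u=\nabla w/f'(u)$. A direct computation gives
$$\nabla u\cdot\nabla\phi = (1+2w^2)\,\nabla w\cdot\nabla h + 2w\,|\nabla w|^2\,h,$$
while the zeroth-order terms collapse, since the factor $f'(u)$ in the equation cancels the $1/f'(u)$ in $\phi$, to $V f(u)f'(u)\phi = Vwh$ and $K g(f(u))f'(u)\phi = Kg(w)h$. All three resulting integrals are finite because they equal the (finite) terms of $I'(u)\phi$, and inserting these identities into (\ref{EQweakdual}) produces exactly (\ref{EQweak2}) for every $h\in C_{\mathrm{c},r}^{\infty}(\mathbb{R}^N)$.

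I expect the only real difficulty to be the verification $\phi\in E$, precisely the $L^2$-integrability of $|\nabla\phi|$ and the finiteness of $\int_{\mathbb{R}^N} V f(\phi)^2\,dx$ near the origin, where $u$ (hence $w$) and $V$ may both be singular; this is exactly where the pointwise estimate of Corollary \ref{COR:est}, the growth bounds on $f$ from Lemma \ref{change2}, and the dimensional restriction $N\ge3$ enter.
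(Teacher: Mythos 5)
Your proposal is correct and follows the same strategy as the paper's own proof: the membership $w=f(u)\in X$ is handled exactly as in the paper, and the crux in both cases is testing the weak equation \eqref{EQweakdual} with the very same function $\phi = h/f'(u)=\sqrt{1+2w^{2}}\,h$, proving $\phi\in E$, and then simplifying via $f'(u)^{2}=1/(1+2w^{2})$ to land on \eqref{EQweak2}; your algebraic identities for $\nabla\phi$ and $\nabla u\cdot\nabla\phi$ coincide with the paper's.

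The one place where you genuinely diverge is the verification that $\int_{\mathbb{R}^N} V f(\phi)^{2}\,dx<\infty$ (and the $L^{2}$ bound for $\sqrt{1+2w^{2}}\,\nabla h$). The paper never invokes the pointwise decay estimate of Corollary \ref{COR:est} nor the bound $V(r)\le C/r^{2}$ from $(\mathbf{H})$ here: it uses the doubling property (11) of Lemma \ref{change2} to absorb the bounded factor $|h|$, splits $B_R$ into $\{|w|\ge 1\}$ and $\{|w|<1\}$, and concludes from $\sqrt{1+2w^{2}}\le 2|w|$ on the first set (so that $\int V f(2|w|)^2 \lesssim \int V|w|^{2}<\infty$ since $w\in X$) and from boundedness of $f(\sqrt{1+2w^{2}})$ on the second (so that $\int_{B_R}V\,dx<\infty$ suffices); similarly it bounds the gradient term by $\int_{B_R}(1+2w^{2})\,dx<\infty$ using only $w\in L^{2}_{\mathrm{loc}}$. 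Your route instead uses $|f(t)|\le|t|$, $f(u)^{2}\le\sqrt{2}\,|u|$, the radial decay $|u(x)|\lesssim |x|^{-(N-2)/2}$, and $(\mathbf{H})$ to show the integrand is $\lesssim |x|^{-(N+2)/2}\in L^{1}(B_1)$. Both verifications are sound; yours is more elementary and self-contained but leans on the singularity hypothesis $(\mathbf{H})$ and the pointwise estimate, whereas the paper's exploits the Orlicz-type structure ($\Delta_2$ property of $f^{2}$) and the already established fact $w\in X$, so it would survive a weakening of the pointwise information on $u$ near the origin.
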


\begin{proof}
It is obvious by our definitions that $\int_{\mathbb{R}^{N}} V \left( |x| \right) \, w^2 \, dx < + \infty .$ Moreover, we have $\nabla w = f'(u) \nabla u$ and thus $\int_{\mathbb{R}^{N}} |\nabla w|^2 dx \leq \int_{\mathbb{R}^{N}} |\nabla u|^2 dx< + \infty $. This gives $w \in X$. 
To prove (\ref{EQweak2}), we start by noticing that easy computations give
$$
\left( f^{-1} \right)'(t) = \sqrt{ 1 + 2 t^2 } , \quad \quad \left( f^{-1}\right)''(t) = \frac{2t}{ \sqrt{ 1 + 2 t^2 } }.
$$
Hence, as $u= f^{-1}(w)$, we derive $\nabla u = \left( f^{-1} \right)'(w)\, \nabla w  = \sqrt{ 1 + 2 w^2 } \, \nabla w $. Let us now fix $h \in C_{\mathrm{c}, r}^{\infty }( \mathbb{R}^{N} ) $ and define $\varphi = \left( f^{-1} \right)'(w)\, h  = \sqrt{ 1 + 2 w^2 }  \, h$. We want to prove that $\varphi \in E$. Of course $\varphi $ is radial, so what we actually need to prove are the following statements:
\begin{itemize}

\item[$i)$] $\int_{\mathbb{R}^{N}} V \left( |x| \right) \, f\left( \varphi \right)^2 dx <+ \infty $;

\item[$ii)$] $\int_{\mathbb{R}^{N}} |\nabla \varphi |^2 \, dx <+ \infty $.
\end{itemize}

In order to prove $i)$, we use the properties of $f$ and $ f^2$ (see lemma \ref{change2}). In particular, from $(11)$ it is easy to obtain that for all $C>1$ there is a constant $k = k(C) >0$ such that $f(C t)^2\leq k f(t)^2 $ for al $t>0$. Recalling that $h \in C_{\mathrm{c}, r}^{\infty }( \mathbb{R}^{N} ) $ we can assume 
$|h(x)| \leq C$ and $\mathrm{supp\,} h \subseteq B_R$. Hence we can compute
$$
\int_{\mathbb{R}^{N}} V \left( |x| \right) \, f\left( \varphi \right)^2 dx = \int_{B_R} V \left( |x| \right) \, f\left(| \varphi |\right)^2 dx =
 \int_{B_R} V \left( |x| \right) \, f\left(  \sqrt{ 1 + 2 w^2 }\, | h|\right)^2 dx 
$$
$$\leq  \int_{B_R} V \left( |x| \right) \, f\left(  \sqrt{ 1 + 2 w^2 } \, C\right)^2 dx \leq k(C) \int_{B_R} V \left( |x| \right) \, f\left(  \sqrt{ 1 + 2 w^2 } \right)^2 dx 
$$
$$
=k(C)\,\int_{B_{R }\cap \left\{  |w| \geq 1 \right\}} V \left( |x| \right) \, f\left(  \sqrt{ 1 + 2 w^2 } \right)^2 dx +
k(C)\, \int_{B_{R }\cap \left\{  |w| < 1 \right\} } V \left( |x| \right) \, f\left(  \sqrt{ 1 + 2 w^2 } \right)^2 dx.
$$
On the one hand, we easily get
$$
\int_{B_{R }\cap \left\{  |w| < 1 \right\} } V \left( |x| \right) \, f\left(  \sqrt{ 1 + 2 w^2 } \right)^2 dx \leq 
\int_{B_{R }\cap \left\{  |w| < 1 \right\} } V \left( |x| \right) \, f\left(  \sqrt{3 } \right)^2 dx
$$
$$
\leq f\left(  \sqrt{3 } \right)^2 \int_{B_{R } } V \left( |x| \right)  \, dx <+\infty .
$$
On the other hand, when $|w| \geq 1 $ one has $ \sqrt{ 1 + 2 w^2 } \leq 2 |w |$ and hence
$$
\int_{B_{R }\cap \left\{  |w| \geq 1 \right\}} V \left( |x| \right) \, f\left(  \sqrt{ 1 + 2 w^2 } \right)^2 dx \leq
\int_{B_{R }\cap \left\{  |w| \geq 1 \right\}} V \left( |x| \right) \, f\left(  2|w| \right)^2 dx
$$
$$
\leq k \int_{B_{R }\cap \left\{  |w| \geq 1 \right\}} V \left( |x| \right) \, f\left(  |w| \right)^2 dx
\leq k \int_{B_{R } } V \left( |x| \right) \,  |w|^2 \, dx <+ \infty 
$$
because $w \in X$. So $i)$ is proved. 

As to $ii)$, we compute
$$
\nabla \varphi = \sqrt{ 1 + 2 w^2 } \, \, \nabla h + 2 \, h \, \frac{w}{\sqrt{ 1 + 2 w^2 } } \, \nabla w 
$$
and we easily get
$$ \left| 2 \, h \, \frac{w}{\sqrt{ 1 + 2 w^2 } } \, \nabla w   \right| \leq C |\nabla w | \in L^2 \left( \mathbb{R}^{N}  \right).$$
On the other hand, as $w \in D_r^{1,2} \left(\mathbb{R}^{N}  \right) $, we have $w \in L_{loc}^2\left(\mathbb{R}^{N}  \right) $ and hence
$$
\int_{\mathbb{R}^{N}} \left| \sqrt{ 1 + 2 w^2 } \, \, \nabla h \right|^2 \, dx \leq C \int_{B_R} \left(  1 + 2 w^2 \right) \, dx  < + \infty .
$$
So also $ii)$ is proved.

We now conclude the proof of the lemma. As $\varphi \in E$ and $I' (u)=0$, exploiting the computations above we get 
$$
0= I'(u) \varphi = \int_{\mathbb{R}^{N}}\sqrt{ 1 + 2 w^2 } \, \, \nabla w \cdot \sqrt{ 1 + 2 w^2 } \, \nabla h \, dx +
\int_{\mathbb{R}^{N}}\sqrt{ 1 + 2 w^2 } \, \nabla w \cdot  \frac{2hw}{\sqrt{ 1 + 2 w^2 } } \, \nabla w \, dx $$
$$
+\int_{\mathbb{R}^{N}} V f(u) f' (u) \left( f^{-1} \right)' (w) h \, dx -
\int_{\mathbb{R}^{N}}Kg(f(u)) f' (u) \left( f^{-1} \right)' (w) h \, dx$$
$$
=\int_{\mathbb{R}^{N}} \left( 1 + 2 w^2 \right) \, \nabla w \cdot  \nabla h \, dx +
\int_{\mathbb{R}^{N}}  2hw \, \left| \nabla w \right|^2 \, dx+
\int_{\mathbb{R}^{N}} V \, w \, h \, dx -
\int_{\mathbb{R}^{N}} K\, g(w) \, h \, dx.
$$
Therefore (\ref{EQweak2}) is satisfied and the theorem is proved.
\end{proof}


\section{Existence of solutions \label{SEC: ex}}

This section is devoted to our main existence result, which is the following.

\begin{thm}
\label{THM:ex} Assume $N\geq 3$, $\left( \mathbf{H}\right) $ and that $g: {\mathbb{R}} \rightarrow {\mathbb{R}}$ is a continuous function satisfying ${\bf \left( g_{1}\right) } $, ${\bf \left( g_{2}\right) } $, $  \left({\bf g}_{q_{1},q_{2}}\right) $. Assume the hypotheses of 
Theorems \ref{THM0} and \ref{THM1} with $q_1 ,q_2 $ satisfying respectively (\ref{th1}) and (\ref{th2}).
\noindent Then the functional $I:E\rightarrow \mathbb{R}$ has a nonnegative
critical point $u\neq 0$.
\end{thm}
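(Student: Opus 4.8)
The plan is to obtain $u$ as a mountain pass critical point of (a truncation of) the $C^1$ functional $I$ of Theorem \ref{THM:critical}, crucially exploiting the \emph{compact} embedding $E \hookrightarrow L_K^{q_1}+L_K^{q_2}$: indeed, the hypotheses of Theorems \ref{THM0} and \ref{THM1} with $q_1,q_2$ as in (\ref{th1})--(\ref{th2}) give $\lim_{R\to 0^+}\mathcal{S}_0(q_1,R)=\lim_{R\to+\infty}\mathcal{S}_\infty(q_2,R)=0$, i.e. $(\mathcal{S}''_{q_1,q_2})$, whence Theorem \ref{THM(cpt)}(ii) applies. First I would reduce to a nonnegative solution by truncating the nonlinearity: set $\bar g(t):=g(t)$ for $t\ge 0$ and $\bar g(t):=0$ for $t<0$. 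Since $g(0)=0$, $\bar g$ is continuous and still satisfies $(g_1)$, $(g_2)$, $(g_{q_1,q_2})$ with the same constants, so all previous results hold for the associated functional $\bar I$. Any critical point $u$ of $\bar I$ is nonnegative: testing $\bar I'(u)=0$ with $u^-:=\max\{-u,0\}\in E$ and noting that $\bar g(f(u))=0$ on $\{u<0\}$ (as $f(u)<0$ there) while $f(u)f'(u)u^-\le 0$, one obtains $\int_{\mathbb{R}^N}|\nabla u^-|^2\,dx\le 0$, so $u^-=0$. As $\bar g(f(u))=g(f(u))$ on $\{u\ge 0\}$, such a $u$ is also a critical point of $I$; hence it suffices to find a nonzero critical point of $\bar I$.

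Next I would check the mountain pass geometry. Since $q_1,q_2\ge 2\theta>4$ and $|\bar G(f(t))|\le C\min\{|f(t)|^{q_1},|f(t)|^{q_2}\}$, using $|f(t)|\le\min\{|t|,2^{1/4}|t|^{1/2}\}$ (Lemma \ref{change2}(2),(6)), the continuous embedding $E\hookrightarrow L_K^{q_1}+L_K^{q_2}$ and Corollary \ref{COR:est}, the term $\int_{\mathbb{R}^N}K\bar G(f(u))\,dx$ is controlled by a sum of powers $\|u\|^{\sigma_i}$ with $\sigma_i>2$; hence $\bar I(u)\ge \frac12\|u\|_{1,2}^2+\frac12\int V f(u)^2\,dx - o(\|u\|^2)\ge\delta>0$ on a small sphere $\|u\|=\rho$. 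For the far point, integrating $(g_1)$ gives $\bar G(t)\ge c\,|t|^{2\theta}$ for $|t|\ge t_0$, so along a fixed $0\le v\in C^\infty_{\mathrm{c},r}(\mathbb{R}^N)$, using Lemma \ref{change2}(4) one finds $\int K\bar G(f(sv))\,dx\gtrsim s^{\theta}$, while the Dirichlet and $V$ terms grow like $s^2$ and $s$; since $\theta>2$, $\bar I(sv)\to-\infty$, producing $e$ with $\bar I(e)<0$ and $\|e\|>\rho$.

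The main obstacle is the Palais--Smale condition. Given a PS sequence $\{u_n\}$, I would fix $\mu\in(2,\theta)$ and estimate $\bar I(u_n)-\frac{1}{\mu}\bar I'(u_n)u_n$. Lemma \ref{change2}(5) gives $\frac12 f(u)^2\le f(u)f'(u)u\le f(u)^2$, and $(g_1)$ together with (5) gives $\bar g(f(u))f'(u)u\ge\theta\,\bar G(f(u))$; these yield $\bar I(u_n)-\frac{1}{\mu}\bar I'(u_n)u_n\ge\bigl(\frac12-\frac{1}{\mu}\bigr)\bigl(\|u_n\|_{1,2}^2+\int V f(u_n)^2\,dx\bigr)$, which bounds $\|u_n\|_{1,2}$ and $\int V f(u_n)^2\,dx$, hence (taking $k=1$ in the definition of $\|\cdot\|_o$) the full norm $\|u_n\|$. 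Passing to $u_n\rightharpoonup u$ in $D^{1,2}_r(\mathbb{R}^N)$, the compact embedding $E\hookrightarrow L_K^{q_1}+L_K^{q_2}$ and the growth of $\bar g$ force $\int K\,\bar g(f(u_n))f'(u_n)(u_n-u)\,dx\to 0$; combining this with $\bar I'(u_n)(u_n-u)\to 0$, Lemma \ref{lem:properties}(2) for the $V$-term, and weak lower semicontinuity of the Dirichlet part, I would deduce $\|u_n-u\|_{1,2}\to 0$, and then Lemma \ref{lem:properties}(3) gives $\|u_n-u\|_o\to 0$, i.e. $u_n\to u$ in $E$. The delicate points are controlling the Orlicz component $\|\cdot\|_o$ and passing to the limit in the composite term $\bar g(f(u_n))f'(u_n)$, both handled via the properties of $f$ and the compactness of the embedding.

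Finally, the Mountain Pass Theorem provides a critical point $u$ of $\bar I$ at level $c\ge\delta>0$, so $u\ne 0$; by the first step $u\ge 0$ and $u$ is a critical point of $I$, which is the desired conclusion.
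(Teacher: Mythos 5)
Your overall strategy (truncating $g$ to force nonnegativity, mountain pass geometry, Palais--Smale via the compact embedding $E\hookrightarrow L_K^{q_1}+L_K^{q_2}$) is the same as the paper's, and several pieces are sound: the truncation and sign argument via $u^-$, the unboundedness of $\bar I$ along rays (the paper's Lemma \ref{LE:MP2}), and the boundedness of PS sequences via $\bar I(u_n)-\tfrac1\mu \bar I'(u_n)u_n$ (the paper does this with $\mu=\theta$; both choices work). The genuine gap is in the Palais--Smale condition, exactly at the point you yourself flag as delicate: the $V$-term. You propose to combine $\bar I'(u_n)(u_n-u)\to 0$, the vanishing of the $K$-term, \emph{``Lemma \ref{lem:properties}(2) for the $V$-term''}, and weak lower semicontinuity to conclude $\|u_n-u\|_{1,2}\to 0$. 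But Lemma \ref{lem:properties}(2) has as hypothesis $u_n\to u$ \emph{strongly in} $E$, which is precisely the conclusion you are trying to reach: the argument is circular. No standard weak-compactness substitute is available, because the Young function $f(t)^2$ grows only linearly at infinity (Lemma \ref{change2}(4)), so the Orlicz component of $E$ behaves like an $L^1$-type space and bounded sequences need not have subsequences converging weakly in $E$; hence there is no obvious way to pass to the limit in $\int_{\mathbb{R}^N} V f(u_n)f'(u_n)(u_n-u)\,dx$. The same problem reappears one line later: Lemma \ref{lem:properties}(3), which you invoke to get $\|u_n-u\|_o\to 0$, requires as hypothesis the modular convergence $\int_{\mathbb{R}^N}Vf(u_n)^2dx\to\int_{\mathbb{R}^N}Vf(u)^2dx$, and nothing in your sketch establishes it.

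The missing idea --- and the actual content of the paper's Lemma \ref{LE:MP4} --- is the convexity of $f^2$ (Lemma \ref{change2}(10)): the functional $J(u)=\tfrac12\int_{\mathbb{R}^N}|\nabla u|^2dx+\tfrac12\int_{\mathbb{R}^N}Vf(u)^2dx$ is convex, so that, writing $\bar I_3(v)=\int_{\mathbb{R}^N}K\,\bar G(f(v))\,dx$ and $J=\bar I+\bar I_3$,
\begin{equation*}
J(u)-J(u_n)\;\geq\; J'(u_n)(u-u_n)\;=\;\bar I'(u_n)(u-u_n)+\bar I_3'(u_n)(u-u_n)\;\longrightarrow\;0,
\end{equation*}
the first term vanishing by the PS property and boundedness, the second by the compact embedding of Theorem \ref{THM(cpt)}(ii) together with the continuity results of \cite{BPR}. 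Combining $J(u)\geq J(u_n)+o(1)$ with weak lower semicontinuity of the Dirichlet integral and Fatou's lemma for the $V$-integral forces, along a subsequence, both $\int Vf(u_n)^2dx\to\int Vf(u)^2dx$ (exactly the missing hypothesis of Lemma \ref{lem:properties}(3)) and $\int|\nabla u_n|^2dx\to\int|\nabla u|^2dx$, which with $u_n\rightharpoonup u$ in $D_r^{1,2}$ gives strong convergence of both norm components. Without this convexity device (or an equivalent monotonicity argument, which in the end amounts to the same inequality) your PS proof does not close. A secondary, repairable soft spot is the geometry: your upper bound $\int K|\bar G(f(u))|\,dx\leq C(\|u\|^{q_1}+\|u\|^{q_2})=o(\|u\|^2)$ is fine, but the lower bound $\tfrac12\|u\|_{1,2}^2+\tfrac12\int Vf(u)^2dx\geq\delta(\rho)>0$ on the sphere $\|u\|=\rho$ is not automatic, since the Orlicz norm $\|u\|_o$ controls the modular $\int Vf(u)^2dx$ only through the $\Delta_2$-property of $f^2$ (Lemma \ref{change2}(11)); this can be fixed, but the paper sidesteps it entirely by using the Aubin--Ekeland form of the Mountain Pass Lemma (Theorem \ref{MPLemma}) with the level set $S_\rho=\{u\in E:\,J(u)=\rho\}$ as separating set, and by estimating the nonlinear term through $w=f(u)\in X\hookrightarrow E$ and $\|f(u)\|_X^2\leq 2J(u)$ rather than through $\|u\|$.
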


\begin{rem}
\label{RMK:thm:ex} In Theorem \ref{THM:ex}, as we look for non negative solutions, we can assume $g(t)=0$ for all $t\leq 0$. Indeed, if we have a nonlinearity $g$ satisfying the hypotheses, we can replace $g$ with $\chi _{\mathbb{R}_{+}}\left( t\right) g\left(
t\right) $ ($\chi _{\mathbb{R}_{+}}$ is the characteristic function of $\mathbb{R
}_{+}$), and the new nonlinearity still satisfies the hypotheses.

\end{rem}

\begin{rem}

Thanks to Theorems \ref{THM0} and \ref{THM1}, the hypotheses of Theorem \ref{THM:ex} imply that $E$ is compactly embedded into $L_{K}^{q_{1}}(\mathbb{R}^{N})+L_{K}^{q_{2}}(\mathbb{R}^{N})$. This is one of the main devices to get our existence result.

\end{rem}

\begin{rem}
As concerns examples of nonlinearities satisfying the hypotheses of Theorem \ref{THM:ex}, the simplest 
$g\in C\left(\mathbb{R};\mathbb{R}\right) $ such that $ \left({\bf  g}_{q_{1},q_{2}}\right) $ holds is 
\[g\left( t\right) =\min \left\{ \left| t\right| ^{q_{1}-2}t,\left| t\right|
^{q_{2}-2}t\right\} ,
\]
which also ensures ${\bf  \left( g_{1}\right) }$ if $
q_{1},q_{2}>4$ (with $\theta = \min \left\{ \frac{q_1}{2}, \frac{q_2}{2} \right\} $). Another model example is 
\[
g\left( t\right) =\frac{\left| t\right| ^{q_{2}-2}t}{1+\left| t\right|
^{q_{2}-q_{1}}}\quad \text{with }1<q_{1}\leq q_{2},
\]
which ensures ${\bf \left( g_{1}\right) }$ if $q_{1}>4$ (with $\theta =\frac{q_1}{2}$). Note that, in
both these cases, also ${\bf \left( g_{2}\right) }$ holds true. Moreover, both of these functions $g$
become $g\left( t\right) =\left| t\right| ^{q-2}t$ if $q_{1}=q_{2}=q$. 
\end{rem}

We will get Theorem \ref{THM:ex} by applying a version of the well-known Mountain-Pass Lemma (see chapter 2 in \cite{AubinEkeland}). Let us first recall  the so-called Palais-Smale condition.

\begin{defin}[Palais-Smale condition]
Let $Y$ be a Banach space and $\Phi : Y \rightarrow \mathbb{R}$ a $C^1$ functional. We say that $\Phi $ satisfies the Palais-Smale condition if for any sequence 
$\{ x_n \}_n $ sucht that $\Phi(x_n )$ is bounded in $\mathbb{R}$ and $\Phi' (x_n ) \rightarrow 0$ in $Y'$, there is a subsequence $\{ x_{n_k} \}_k $ converging in $Y$.
\end{defin}

\begin{thm}[Mountain Pass Lemma]
\label{MPLemma}
\par \noindent Let $Y$ be a Banach space and $\Phi : Y \rightarrow \mathbb{R}$ a $C^1$ functional with $\Phi (0)=0$ . Assume that $\Phi$ satisfies the Palais-Smale condition and that there are a subset $S \subseteq Y$ and a real number $\alpha >0$  such that:

\begin{itemize}

\item[(1)] $Y  \backslash S$ is not arcwise connected;

\item[(2)] $\Phi  (x) \geq \alpha$ for all $x \in S$;

\item[(3)] there exists $ y \in Y  \backslash (C_0 \cup S )$ such that $\Phi (y)<0$, where $C_0$ is the connected component of $ Y  \backslash S$ such that $0 \in C_0$. 

\end{itemize}

\noindent Then $\Phi $ has a critical point $u \in Y$ such that $\Phi (u) \geq \alpha $.

\end{thm}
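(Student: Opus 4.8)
The plan is to apply the Mountain Pass Lemma (Theorem \ref{MPLemma}) to $Y=E$ and $\Phi=I$, taking $S$ to be a sphere $S=\{u\in E:\|u\|_*=\rho\}$ for a suitable norm-like functional and radius $\rho>0$. By Remark \ref{RMK:thm:ex} we may assume $g(t)=0$ for $t\leq 0$, so that critical points will automatically be nonnegative: indeed, testing $I'(u)=0$ with $h=u^-$ and using $f(u)f'(u)u^-\geq 0$ together with $g(f(u))=0$ where $u<0$ forces $\|u^-\|_{1,2}=0$. The functional $I$ is $C^1$ with $I(0)=0$ by Theorem \ref{THM:critical}, so the three ingredients that remain to be verified are the geometric conditions (2)--(3) of Theorem \ref{MPLemma} and the Palais--Smale condition.

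First I would establish the mountain-pass geometry. For the barrier on $S$, I would use $\left({\bf g}_{q_1,q_2}\right)$ to bound $G(f(u))\leq C\min\{|u|^{q_1},|u|^{q_2}\}$ (via property (2) of Lemma \ref{change2}), and then the continuous embedding $E\hookrightarrow L_K^{q_1}+L_K^{q_2}$ from Theorem \ref{THM(cpt)}(i) to control $I_3$. Since $q_1,q_2>2\theta>2$ (hence both exceed the quadratic leading terms $\frac12\|u\|_{1,2}^2+\frac12\int V f(u)^2$), for $\|u\|$ small the quadratic part dominates and one gets $I(u)\geq\alpha>0$ on a small sphere, giving (2). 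For condition (3), I would fix $t_0$ from $\left({\bf g}_2\right)$ and use $\left({\bf g}_1\right)$ to derive the superquadratic growth $G(s)\geq c\,|s|^{2\theta}$ for large $|s|$; evaluating $I$ along a ray $t\mapsto t\bar u$ for a fixed $\bar u\geq0$ with $\int K\,G(f(\bar u))>0$, the negative term grows faster than the quadratic terms (using property (4) of Lemma \ref{change2}, $f(t)\sim t^{1/2}$, so $G(f(t\bar u))$ still overwhelms the quadratic growth since $2\theta\cdot\frac12=\theta>1$), yielding a point $y$ with $I(y)<0$ outside the component $C_0$ containing $0$.

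The main obstacle will be the Palais--Smale condition, and this is where the compact embedding is essential. Given a sequence $\{u_n\}$ with $I(u_n)$ bounded and $I'(u_n)\to 0$, I would first prove boundedness of $\{\|u_n\|\}$: combining $I(u_n)-\frac{1}{2\theta}I'(u_n)u_n$ with $\left({\bf g}_1\right)$ and property (5) of Lemma \ref{change2} (which gives $f(u)/2\leq uf'(u)\leq f(u)$, so that $g(f(u))f'(u)u_n$ can be compared with $g(f(u))f(u)\geq 2\theta\,G(f(u))$) produces a coercive lower bound controlling both $\|u_n\|_{1,2}^2$ and $\int V f(u_n)^2$, hence $\|u_n\|$. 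Then, by Theorems \ref{THM0} and \ref{THM1}, the embedding $E\hookrightarrow L_K^{q_1}+L_K^{q_2}$ is \emph{compact}, so up to a subsequence $u_n\rightharpoonup u$ in $E$, $u_n\to u$ strongly in $L_K^{q_1}+L_K^{q_2}$, and $u_n(x)\to u(x)$ a.e. The delicate part is upgrading weak to strong convergence in $E$: I would pass to the limit in $I'(u_n)(u_n-u)\to 0$, using the strong convergence in $L_K^{q_1}+L_K^{q_2}$ (with the bound on $|g(f(u_n))f'(u_n)|$ from the proof of Theorem \ref{THM:critical}) to kill the nonlinear term, thereby obtaining $\int|\nabla(u_n-u)|^2\to 0$ and, via the lower-semicontinuity and convexity properties of $f^2$ together with (3) of Lemma \ref{lem:properties}, that $\int V f(u_n)^2\to\int V f(u)^2$, which yields $\|u_n-u\|_o\to 0$ and hence $u_n\to u$ in $E$. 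Once Palais--Smale holds, Theorem \ref{MPLemma} delivers a critical point $u$ with $I(u)\geq\alpha>0$, so $u\neq 0$, and the sign argument above makes it nonnegative, completing the proof.
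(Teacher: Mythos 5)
Your proposal does not prove the statement it was asked to prove. The statement is the \emph{abstract} Mountain Pass Lemma (Theorem \ref{MPLemma}) itself: an arbitrary Banach space $Y$, an arbitrary $C^1$ functional $\Phi$ satisfying the Palais--Smale condition together with the separation hypotheses (1)--(3), and the conclusion that a critical point at level $\geq\alpha$ exists. What you have written is instead a proof of Theorem \ref{THM:ex}, the paper's existence result: you take $Y=E$, $\Phi=I$, verify the mountain-pass geometry and the Palais--Smale condition for $I$, and then \emph{invoke} Theorem \ref{MPLemma} to conclude. This is circular with respect to the stated theorem --- the very result to be proved appears as a black box in your argument. (For the record, your verification of the hypotheses closely tracks the paper's own Lemmas \ref{LE:MP1}--\ref{LE:MP4} and the proof of Theorem \ref{THM:ex}, so that material is sound; it is just a proof of a different statement. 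Note also that the paper itself does not prove Theorem \ref{MPLemma}, but cites it from Aubin--Ekeland.)

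A genuine proof of the abstract lemma would run along the standard minimax lines: take the point $y$ from hypothesis (3), define the class of paths $\Gamma=\{\gamma\in C([0,1],Y):\gamma(0)=0,\ \gamma(1)=y\}$ and the level $c=\inf_{\gamma\in\Gamma}\max_{t\in[0,1]}\Phi(\gamma(t))$. The topological hypotheses enter exactly here: since $0\in C_0$ and $y\notin C_0\cup S$, the point $y$ lies in a different connected component of $Y\setminus S$ than $0$, so every continuous path from $0$ to $y$ must meet $S$; by (2) this forces $c\geq\alpha>0$, and in particular $c>\max\{\Phi(0),\Phi(y)\}$. One then applies Ekeland's variational principle (or a quantitative deformation lemma) to produce a Palais--Smale sequence at level $c$: if $c$ were not a critical value, the deformation argument would push a nearly optimal path below $c$, contradicting the definition of $c$; the Palais--Smale condition then upgrades the PS sequence to an actual critical point $u$ with $\Phi(u)=c\geq\alpha$. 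None of these ingredients --- the path class, the intersection argument with $S$, the minimax level, the deformation or Ekeland step --- appear in your proposal, and they constitute the entire content of the theorem.
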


To prove Theorem \ref{THM:ex} we will prove that the functional $I: E\rightarrow \mathbb{R}$ satisfies the hypotheses of the Mountain Pass Lemma. It is obvious that $I(0)=0$. The other hypotheses of the Mountain Pass Lemma are proved in the following lemmas. More precisely, assumptions {\it (1)} and {\it (2)} are proved in Lemma \ref{LE:MP1}, while assumption {\it (3)} is proved in Lemma \ref{LE:MP2}. In Lemmas \ref{LE:MP3} and \ref{LE:MP4} we show that $I$ satisfies the Palais-Smale condition. 

Recall the three functionals $I_1 , I_2 , I_3$ introduced in the proof of Lemma \ref{THM:critical} and define, for $u \in E$, 
$$J (u) = I_1 (u)+ I_2 (u)= \frac{1}{2} \int_{\mathbb{R}^{N}}|\nabla u |^2 \, dx  + \frac{1}{2} \int_{\mathbb{R}^{N}}V(|x|) f(u)^2 \,dx. $$
Then, for any $\rho >0$, define 
$$
S_{\rho}= \left\{ u \in E \, | \, J(u) = \rho \right\}.
$$
\begin{lem}
\label{LE:MP1}
\label{LEM:MPgeom}Assume the hypotheses of Theorem \ref{THM:ex}. Then there is $ \rho^* >0$ such that for all $\rho \in (0, \rho^* )$ the set $E\backslash S_{\rho}$ is not arcwise connected and there exists $\alpha = \alpha (\rho )>0$ such that $I(u) \geq \alpha$ for all $u \in S_{\rho}$.

\end{lem}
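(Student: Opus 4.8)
The plan is to verify the two assertions of the lemma separately: the non-connectedness of $E\setminus S_\rho$ is essentially topological and holds for every $\rho>0$, while the positivity of $I$ on $S_\rho$ is the substantial point and forces the restriction $\rho<\rho^*$.

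First I would settle the topological claim. Since $I_1$ is continuous and the continuity of $I_2(u)=\tfrac12\int_{\mathbb{R}^N}V(|x|)f(u)^2\,dx$ follows from Lemma~\ref{lem:properties}(2), the functional $J=I_1+I_2$ is continuous on $E$. Hence $\{J<\rho\}$ and $\{J>\rho\}$ are disjoint open sets whose union is $E\setminus S_\rho$. The first is nonempty because it contains $0$ (as $J(0)=0<\rho$), and the second is nonempty because $J$ is unbounded above: for a fixed $u_0\neq 0$ one has $J(tu_0)\ge I_1(tu_0)=t^2 I_1(u_0)\to+\infty$ as $t\to+\infty$. Therefore $E\setminus S_\rho$ is disconnected, hence not arcwise connected, for \emph{every} $\rho>0$.

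For the second claim I would write $I=J-I_3$, so that on $S_\rho$ one has simply $I(u)=\rho-I_3(u)$, and it suffices to show that $\sup_{u\in S_\rho}I_3(u)=o(\rho)$ as $\rho\to 0^+$. The key preliminary step, and the main obstacle, is to control the full norm $\|u\|=\|u\|_{1,2}+\|u\|_o$ on $S_\rho$, since $J$ only controls the Dirichlet integral $\|u\|_{1,2}^2=2I_1(u)\le 2\rho$ and the modular $m(u):=\int_{\mathbb{R}^N}V(|x|)f(u)^2\,dx=2I_2(u)\le 2\rho$, whereas $\|u\|_o$ is the Amemiya norm. To estimate $\|u\|_o$ by $m(u)$ I would exploit that $f$ is concave on $[0,+\infty)$ — indeed $f'(t)=(1+2f(t)^2)^{-1/2}$ is decreasing — so that $f(st)\le s\,f(t)$ for all $s\ge 1$ and $t\ge 0$, and consequently $m(su)\le s^2 m(u)$ for $s\ge 1$. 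Assuming $m(u)\le 1$ and inserting $k=m(u)^{-1/2}\ge 1$ into the infimum defining $\|u\|_o$, we get $\|u\|_o\le \tfrac1k\big[1+m(ku)\big]\le \sqrt{m(u)}\,[1+1]=2\sqrt{m(u)}$ (the case $m(u)=0$ being trivial). Thus, as soon as $2\rho\le 1$, every $u\in S_\rho$ satisfies $\|u\|\le\sqrt{2\rho}+2\sqrt{2\rho}=C\sqrt\rho$.

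Finally I would bound $I_3$. Integrating $\left( {\bf g}_{q_{1},q_{2}}\right)$ gives $0\le G(t)\le C\min\{|t|^{q_1},|t|^{q_2}\}$, and since $|f(u)|\le|u|$ by Lemma~\ref{change2}(2) while $s\mapsto\min\{s^{q_1},s^{q_2}\}$ is nondecreasing,
\[
I_3(u)\le C\int_{\mathbb{R}^N}K(|x|)\min\{|u|^{q_1},|u|^{q_2}\}\,dx\le C\int_{B_{R_2}}K(|x|)|u|^{q_1}\,dx+C\int_{B_{R_2}^c}K(|x|)|u|^{q_2}\,dx .
\]
Because the hypotheses of Theorems~\ref{THM0} and~\ref{THM1} imply $(\mathcal{S}'_{q_1,q_2})$, the three-region estimates already carried out in the proof of Theorem~\ref{THM(cpt)} — inequalities (\ref{pf1}), (\ref{pf2}) and the annulus bound (\ref{pf3}) furnished by Lemma~\ref{Lem(corone)} — yield $\int_{B_{R_2}}K|u|^{q_1}\,dx\le C\|u\|^{q_1}$ and $\int_{B_{R_2}^c}K|u|^{q_2}\,dx\le C\|u\|^{q_2}$. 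Combining with $\|u\|\le C\sqrt\rho$ and recalling $q_1,q_2\ge 2\theta>4$, we obtain for $\rho\le 1$ the bound $I_3(u)\le C\rho^{q_1/2}+C\rho^{q_2/2}\le C\rho^{q/2}$ with $q:=\min\{q_1,q_2\}$ and $q/2>2$. Hence $I(u)=\rho-I_3(u)\ge \rho-C\rho^{q/2}=\rho\big(1-C\rho^{q/2-1}\big)$, and choosing $\rho^*>0$ so small that $2\rho^*\le 1$ and $C(\rho^*)^{q/2-1}\le \tfrac12$, we conclude that $I(u)\ge \rho/2=:\alpha(\rho)>0$ for all $u\in S_\rho$ and all $\rho\in(0,\rho^*)$, which finishes the proof.
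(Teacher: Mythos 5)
Your proof is correct, and for the substantive half of the lemma it takes a genuinely different route from the paper's. The shared skeleton is the same: continuity of $J$ settles the topological claim, and the estimates (\ref{pf1})--(\ref{pf3}) from the proof of Theorem \ref{THM(cpt)}, combined with $q_1,q_2\geq 2\theta>4$, reduce the positivity claim to showing that a suitable $E$-norm is $O(\sqrt{\rho})$ on $S_\rho$. The difference is \emph{which} norm is controlled, and how. The paper never estimates the Orlicz norm $\|u\|_o$ at all: it passes to $w=f(u)$, notes $f(u)\in X$ with $\|f(u)\|_X^2\leq 2J(u)=2\rho$ (since $|f'|\leq 1$), invokes the embedding $X\hookrightarrow E$ of Corollary \ref{COR:embed} to get $\|f(u)\|\leq C\sqrt{\rho}$, and then applies (\ref{pf1})--(\ref{pf3}) to $f(u)$ itself, bounding $G(f(u))\leq C\min\{|f(u)|^{q_1},|f(u)|^{q_2}\}$. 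You instead control the full norm of $u$: the bound $\|u\|_{1,2}\leq\sqrt{2\rho}$ is immediate, and for the Amemiya norm you prove the modular-to-norm inequality $\|u\|_o\leq 2\sqrt{m(u)}$ (valid when the modular $m(u)=\int_{\mathbb{R}^N}V(|x|)f(u)^2\,dx$ is at most $1$) by inserting $k=m(u)^{-1/2}$ into the defining infimum and using $m(ku)\leq k^2 m(u)$ for $k\geq 1$; the latter rests on the concavity of $f$ on $[0,+\infty)$, a property not listed in Lemma \ref{change2} but correctly derived from (\ref{eq:change}), since $f'=(1+2f^2)^{-1/2}$ is decreasing there and $f(0)=0$. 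Then $|f(t)|\leq|t|$ and the monotonicity of $s\mapsto\min\{s^{q_1},s^{q_2}\}$ let you apply (\ref{pf1})--(\ref{pf3}) to $u$ directly. What each approach buys: the paper's detour through $X$ is shorter and completely sidesteps the Orlicz norm; your argument is more self-contained (no appeal to $X\hookrightarrow E$ or to $f(u)\in X$) and yields the extra geometric information that $S_\rho$ is contained in the ball of radius $C\sqrt{\rho}$ of $E$, which the paper's proof does not provide. Your handling of the topological step (the two nonempty disjoint open sets $\{J<\rho\}$ and $\{J>\rho\}$, the latter nonempty because $J$ is unbounded above) is a minor variant of the paper's path-crossing argument and in fact gives non-connectedness of $E\setminus S_\rho$ for every $\rho>0$ rather than only for $\rho<J(v)$.
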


\proof
Fix any $v \in E \backslash \{ 0 \}$ and set $\rho_1 = J(v) >0$. Then for all $\rho \in (0, \rho_1 )$ the set $E\backslash S_{\rho}$ is not arcwise connected, because $J$ is a continuous functional on $E$ and any continuous path joining $0$ and $v$ must intersect $S_{\rho}$. To get $S=S_\rho$ and $\alpha $ as in Mountain Pass Lemma, we recall first that, by Corollary \ref{COR:embed}, one has $X \hookrightarrow E$ and therefore there exists $C>0$ such that $||u|| \leq C ||u||_X$ for all $u\in X$. Also, we know that if $u \in E$ then $f(u) \in X$, so that, for all $u \in E$, we have
$$
||f(u)|| \leq C ||f(u)||_X.
$$
In the hypotheses of Theorem \ref{THM:ex}, we can choose $0<R_1 <R_2$ such that $\mathcal{S}_{0}(q_1 ,R_1 ) < +\infty$ and $\mathcal{S}_{\infty}(q_2 ,R_2 ) < +\infty$. Hence
$$ \left| \int_{\mathbb{R}^N}K(|x|) G(f(u)) dx \right| \leq  M \int_{\mathbb{R}^N}K(|x|) \min \left\{ |f(u)|^{q_1}, |f(u)|^{q_2}   \right\} dx 
$$
$$\leq
M \int_{B_{R_1}} K(|x|)|f(u)|^{q_1}dx +M \int_{B_{R_2} \backslash B_{R_1}} K(|x|)|f(u)|^{q_1}dx +M \int_{B_{R_2^c}} K(|x|)|f(u)|^{q_2}dx
$$
$$ \leq
M \mathcal{S}_{0}(q_1 ,R_1 ) ||f(u)||^{q_1} + MC_{R_1 , R_2}||f(u)||^{q_1}+M \mathcal{S}_{\infty}(q_2 ,R_2 ) ||f(u)||^{q_2} .$$
These inequalities derive from the hypotheses on $g$, the definitions of  $\mathcal{S}_{0}$ and $\mathcal{S}_{\infty}$, and Lemmas \ref{Lem(corone)} and 
\ref{A6}. So we get 
$$ \left| \int_{\mathbb{R}^N}K(|x|) G(f(u)) dx \right| \leq C_1 ||f(u)||^{q_1}  + C_2 ||f(u)||^{q_2} \leq C_3||f(u)||_{X}^{q_1} + C_4 ||f(u)||_{X}^{q_2}.$$
Now we have 
$$||f(u)||_{X}^2= \int_{\mathbb{R}^N}|\nabla f(u) |^2 dx + \int_{\mathbb{R}^N}V(|x|) f(u)^2 dx \leq \int_{\mathbb{R}^N}|\nabla u |^2 dx + \int_{\mathbb{R}^N}V(|x|) f(u)^2 dx = 2 J(u).$$
and therefore, for $u \in S_{\rho}$, we get 
$$ \left| \int_{\mathbb{R}^N}K(|x|) G(f(u)) dx \right|\leq C_5 \rho^{q_1 /2} + C_6 \rho^{q_2 /2}.$$
Hence, for $u \in S_{\rho}$ we conclude that
$$I(u) = J(u) - \int_{\mathbb{R}^N}K(|x|) G(f(u)) dx \geq \rho -C_5 \rho^{q_1 /2} - C_6 \rho^{q_2 /2}.$$
As $\frac{q_1}{2} , \frac{q_2}{2} >2$, it is obvious that for $\rho >0$ small enough we have $\alpha = \alpha (\rho) = \rho -C_5 \rho^{q_1 /2} - C_6 \rho^{q_2 /2}>0$, and this concludes the proof of the lemma.
\endproof

\begin{lem}
\label{LE:MP2}
Take $\rho >0$ as in Lemma \ref{LE:MP1}. Then there exists $v\in E$ such that $J(v) > \rho$ and $I(v)<0$.
\end{lem}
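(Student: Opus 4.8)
The plan is to produce $v$ of the form $v=s\phi$ for a fixed test function $\phi$ and a large scaling parameter $s>0$, exploiting the fact that the Ambrosetti--Rabinowitz type condition $\mathbf{(g_1)}$ forces $G$ to grow superquadratically, whereas $J$ grows only quadratically along the ray $s\mapsto s\phi$. Since $\theta>2$, the nonlinear term will eventually dominate and drive $I$ below $0$, while $J$ still tends to $+\infty$, so that the requirement $J(v)>\rho$ is met as well.

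First I would fix a radial $\phi\in C_{\mathrm{c},r}^\infty(\mathbb{R}^N\setminus\{0\})$ with $\phi\ge 0$ and $\phi\not\equiv 0$. As its support is a compact set avoiding the origin, $V$ and $K$ are bounded there, so $\phi\in E$ and all integrals defining $J(s\phi)$ and $I_3(s\phi)=\int_{\mathbb{R}^N}K(|x|)G(f(s\phi))\,dx$ are finite. Using $|f(t)|\le|t|$ (property $(2)$ of Lemma~\ref{change2}) I obtain
\[
J(s\phi)=\tfrac12 s^2\!\int_{\mathbb{R}^N}\!|\nabla\phi|^2\,dx+\tfrac12\!\int_{\mathbb{R}^N}\!V f(s\phi)^2\,dx\le s^2\Big(\tfrac12\!\int_{\mathbb{R}^N}\!|\nabla\phi|^2\,dx+\tfrac12\!\int_{\mathbb{R}^N}\!V\phi^2\,dx\Big)=:s^2C_\phi,
\]
while the gradient term alone yields $J(s\phi)\ge\tfrac12 s^2\int_{\mathbb{R}^N}|\nabla\phi|^2\,dx\to+\infty$, so $J(s\phi)>\rho$ for all $s$ large.

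The crucial step is a matching lower bound on $I_3(s\phi)$. Integrating the inequality $2\theta G(t)\le g(t)t=tG'(t)$ of $\mathbf{(g_1)}$ from the point $t_0$ of $\mathbf{(g_2)}$ (where $G(t_0)>0$) gives $G(t)\ge C\,t^{2\theta}$ for all $t\ge t_0$, with $C:=G(t_0)/t_0^{2\theta}>0$. Setting $A:=\{x:\phi(x)\ge\varepsilon\}$ with $\varepsilon>0$ small enough that $|A|>0$, for $s$ large one has $s\phi\ge s\varepsilon\ge\max\{1,f^{-1}(t_0)\}$ on $A$; hence $f(s\phi)\ge t_0$, and by property $(7)$ of Lemma~\ref{change2} (valid since $s\phi\ge 1$) $f(s\phi)^2\ge C_1^2\,s\phi$. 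Combining these, on $A$ one gets $G(f(s\phi))\ge C\,(f(s\phi)^2)^\theta\ge C\,C_1^{2\theta}\,s^\theta\phi^\theta$, whence, using $G\ge 0$ and $K>0$,
\[
I_3(s\phi)\ge C\,C_1^{2\theta}\Big(\int_A K(|x|)\phi^\theta\,dx\Big)\,s^\theta=:c_0\,s^\theta,\qquad c_0>0 .
\]
Therefore $I(s\phi)=J(s\phi)-I_3(s\phi)\le C_\phi s^2-c_0 s^\theta$, and since $\theta>2$ the right-hand side tends to $-\infty$ as $s\to+\infty$. Choosing $s$ large enough that simultaneously $J(s\phi)>\rho$ and $C_\phi s^2-c_0 s^\theta<0$, and setting $v:=s\phi$, completes the proof.

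I expect the only genuine obstacle to be the lower estimate on $I_3$: one must correctly transport the superquadratic growth $G(t)\gtrsim t^{2\theta}$ furnished by $\mathbf{(g_1)}$ through the sublinear behaviour $f(t)^2\sim\sqrt{2}\,t$ of the change of variables, which produces the \emph{effective} exponent $\theta$ (rather than $2\theta$) in the scaling. This is still enough to beat the quadratic growth of $J$ precisely because $\theta>2$; everything else is a routine verification that the chosen $\phi$ makes all integrals finite.
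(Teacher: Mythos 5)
Your proof is correct and takes essentially the same route as the paper's: you scale a fixed nonnegative compactly supported radial test function, integrate $\mathbf{(g_1)}$ from $t_0$ to get $G(t)\geq C t^{2\theta}$, transport this through the change of variables via property (7) of Lemma \ref{change2} (so that $G(f(s\phi))\gtrsim s^{\theta}$ on a superlevel set), bound $J(s\phi)\lesssim s^{2}$ using $|f(t)|\leq |t|$, and conclude since $\theta>2$. The only differences (supporting $\phi$ away from the origin and using the set $\{\phi\geq\varepsilon\}$ instead of the paper's $\{u_{0}\geq t_{1}\}$ with $f(t_{1})>t_{0}$) are cosmetic.
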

\proof
From assumption ${ \bf\left( g_{1}\right)} $ and ${ \bf
\left( g_{2}\right) }$ we infer that $G(t)\geq 0$ for all $t$ and, for every $t_+ >t_0$ and all $t> t_+$, 
\begin{equation}
\label{diseG}
G\left( t\right) \geq \frac{G\left( t_+ \right) }{t_{+}^{2\theta }}
t^{2\theta }>0 .
\end{equation}
Clearly it is not restrictive to assume $t_0 \geq 1$. Now we fix $t_1 >1$ such that $f(t_1 )>t_0$ and then we pick a non negative
function $u_{0}\in C_{c,r}^{\infty }( \mathbb{R}^{N}  )$ such that the set $\{x\in \mathbb{R}^{N}:u_{0}\left(
x\right) \geq t_{1}\}$ has positive Lebesgue measure. Hence for every $\lambda >1$, using (\ref{diseG}) with $t_+ = f(t_1)$, we get 
$$
\int_{\mathbb{R}^{N}}K(|x|) G\left( f\left( \lambda u_{0}\right) \right) dx \geq
\int_{\left\{ \lambda u_{0}\geq t_{1}\right\} }K(|x|) G\left( f\left( \lambda u_{0}\right) \right) dx\geq \frac{G(f(t_1 ))}{f(t_ 1)^{2\theta }}
\int_{\left\{ \lambda u_{0}\geq t_{1}\right\} }K(|x|)  \left( f\left( \lambda u_{0} \right) \right)^{2\theta }dx $$
$$\geq C_1 \lambda ^{\theta } \int_{\left\{ u_{0}\geq
t_{1}\right\} }K(|x|) \, u_{0}^{\theta } dx= C_2 \lambda ^{\theta } ,
$$
where $ C_2 = C_1 \int_{\left\{ u_{0}\geq t_{1}\right\} }K(|x|) \,  u_{0}^{\theta }dx>0$. On the other hand
$$ \int_{\mathbb{R}^{N} } V(|x|)  f( \lambda u_{0})^2dx \leq \lambda^2 \int_{\mathbb{R}^{N} }V(|x|) \, u_{0}^2 dx ,$$
so that 
$$
I(\lambda u_0 ) = \frac{1}{2} \int_{\mathbb{R}^{N} } \left| \lambda \nabla u_0 \right|^2 dx + \frac{1}{2} \int_{\mathbb{R}^{N} } V(|x|) f\left( \lambda u_0 \right)^2dx-
\int_{\mathbb{R}^{N}}K(|x|) G\left( f\left( \lambda u_{0}\right) \right) dx \leq C_3 \lambda^2 - C_2 \lambda ^{\theta }.
$$

\noindent As $\theta  >2$, we deduce that $I(\lambda u_0 ) \rightarrow - \infty $ when $\lambda \rightarrow +\infty$. As it is obvious that $J(\lambda u_0 ) \rightarrow + \infty $ when $\lambda \rightarrow +\infty$, the proof is concluded by choosing $v = \lambda u_0$ for $\lambda$ large enough.
\endproof

\begin{lem}
\label{LE:MP3}
Under the assumptions of Theorem \ref{THM:ex}, let $\left\{ u_n   \right\}_n \subseteq E$ be a Palais-Smale sequence for $I$, that is, a sequence such that 
$ \left\{ I(u_n ) \right\}_n$ is bounded and $ I' (u_n ) \rightarrow 0 $ in $E'$. Then $\left\{ u_n   \right\}_n $ in bounded in $ E$.
\end{lem}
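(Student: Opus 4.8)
The plan is to run the standard Ambrosetti--Rabinowitz boundedness argument, with the crucial point being the choice of test function adapted to the change of variables. Because $I'$ contains the factor $f'(u)$, the natural object to test $I'(u_n)$ against is not $u_n$ itself but
\[
h_n := \frac{f(u_n)}{f'(u_n)} = f(u_n)\sqrt{1+2f(u_n)^2},
\]
which cancels the $f'(u_n)$ appearing in the potential and nonlinear terms. Before using it I would check $h_n\in E$. Differentiating the Cauchy problem \eqref{eq:change} gives $f''=-2f(f')^4$, so the map $t\mapsto f(t)/f'(t)$ is smooth with derivative $\tfrac{1+4f(t)^2}{1+2f(t)^2}$, which lies in $[1,2)$. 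By the chain rule $\nabla h_n=\tfrac{1+4f(u_n)^2}{1+2f(u_n)^2}\nabla u_n$, hence $|\nabla h_n|\le 2|\nabla u_n|\in L^2$ and $h_n\in D^{1,2}_r$; the potential condition is verified together with the Orlicz estimate below.

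Next I would compute the combination $2\theta I(u_n)-I'(u_n)h_n$. With $h=f(u)/f'(u)$ the three terms of $I'(u)h$ (see \eqref{diffrechet}) become $\int \tfrac{1+4f(u)^2}{1+2f(u)^2}|\nabla u|^2$, $\int V f(u)^2$, and $\int K\,g(f(u))f(u)$, so that
\[
2\theta I(u_n)-I'(u_n)h_n=\int_{\mathbb{R}^N}\Big(\theta-\tfrac{1+4f(u_n)^2}{1+2f(u_n)^2}\Big)|\nabla u_n|^2\,dx+(\theta-1)\int_{\mathbb{R}^N}V f(u_n)^2\,dx+\int_{\mathbb{R}^N}K\big(g(f(u_n))f(u_n)-2\theta G(f(u_n))\big)\,dx.
\]
By ${\bf (g_1)}$ (applied with $t=f(u_n)$) the last integrand is nonnegative, and since $\tfrac{1+4f^2}{1+2f^2}<2<\theta$ the first factor exceeds $\theta-2>0$. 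Therefore
\[
2\theta I(u_n)-I'(u_n)h_n\ \ge\ (\theta-2)\int_{\mathbb{R}^N}|\nabla u_n|^2\,dx+(\theta-1)\int_{\mathbb{R}^N}V f(u_n)^2\,dx .
\]

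It then remains to bound the left-hand side. Since $\{I(u_n)\}$ is bounded and $I'(u_n)\to 0$ in $E'$, one has $2\theta I(u_n)-I'(u_n)h_n\le C+\varepsilon_n\|h_n\|$ with $\varepsilon_n\to 0$. The gradient part is immediate, $\|h_n\|_{1,2}\le 2\|u_n\|_{1,2}$. For the Orlicz part I would take $k=1$ in the definition of $\|\cdot\|_o$ to get $\|h_n\|_o\le 1+\int V f(h_n)^2$, and then establish the pointwise bound $f(h_n)^2\le C f(u_n)^2$: splitting according to $|f(u_n)|\ge 1$ or $<1$ and using $f(t)^2\le\sqrt2\,|t|$ and $f(t)^2\le t^2$ from Lemma \ref{change2} gives $|h_n|\le\sqrt3\,f(u_n)^2$ in the first case and $|h_n|<\sqrt3\,|f(u_n)|$ in the second. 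Consequently $\int V f(h_n)^2\le C\int V f(u_n)^2$ (which also secures $h_n\in E$) and $\|h_n\|\le C\big(1+\|u_n\|_{1,2}+\int V f(u_n)^2\big)$. Writing $A_n:=\int|\nabla u_n|^2$ and $B_n:=\int V f(u_n)^2$, the displayed inequality becomes $(\theta-2)A_n+(\theta-1)B_n\le C+\varepsilon_n C(1+\sqrt{A_n}+B_n)$; absorbing $\varepsilon_n\sqrt{A_n}$ via Young's inequality and $\varepsilon_n B_n$ into the left-hand side for large $n$ yields uniform bounds on $A_n$ and $B_n$. Finally $\|u_n\|_{1,2}=\sqrt{A_n}$ is bounded and, again by $\|u_n\|_o\le 1+B_n$, so is the Orlicz norm, giving $\{\|u_n\|\}$ bounded.

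I expect the genuine difficulty to be concentrated in the third paragraph: verifying $h_n\in E$ and, above all, controlling the Orlicz norm $\|h_n\|_o$ of the \emph{nonlinear} test function. This is exactly where the algebraic identities for $f$ (the derivative $\tfrac{1+4f^2}{1+2f^2}\in[1,2)$ and the growth estimates $f(t)^2\le\sqrt2\,|t|$, $f(t)^2\le t^2$) are essential; once the bound $\|h_n\|\le C(1+\|u_n\|_{1,2}+\int V f(u_n)^2)$ is in hand, the coercivity and the absorption of the $\varepsilon_n\|h_n\|$ term are routine.
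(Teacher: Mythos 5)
Your proposal is correct, but it follows a genuinely different route from the paper's proof. The paper tests $I'(u_n)$ against $u_n$ itself, forming $I(u_n)-\tfrac{1}{\theta}I'(u_n)u_n$, and then uses property (5) of Lemma \ref{change2} ($f(t)/2\leq tf'(t)\leq f(t)$ for $t\geq 0$) twice: once to show $\int V\left(f(u_n)^2-f(u_n)f'(u_n)u_n\right)dx\geq 0$, and once, combined with $\left(\mathbf{g}_1\right)$, to show the $K$-terms contribute nonnegatively; it then relates $\int|\nabla u_n|^2dx+\int Vf(u_n)^2dx$ to $\|u_n\|-2$ through a case split on whether $\|u_n\|_{1,2}\lessgtr 1$, and closes with $|I'(u_n)u_n|\leq\delta\|u_n\|$ for small $\delta$. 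You instead test against $h_n=f(u_n)/f'(u_n)$, which cancels the $f'$ factors exactly, so that $\left(\mathbf{g}_1\right)$ applies verbatim and property (5) is never needed; the price is that the real work shifts to verifying $h_n\in E$ and bounding $\|h_n\|\leq C\left(1+\|u_n\|_{1,2}+\int Vf(u_n)^2dx\right)$, which you do correctly via the Lipschitz bound $\tfrac{1+4f^2}{1+2f^2}\in[1,2)$ and the growth estimates $f(t)^2\leq\sqrt2\,|t|$, $f(t)^2\leq t^2$ from Lemma \ref{change2}. Your final absorption via Young's inequality on $A_n,B_n$ is sound and plays the same role as the paper's $\delta$-argument. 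One small point to tighten: from $|\nabla h_n|\leq 2|\nabla u_n|\in L^2$ alone you cannot conclude $h_n\in D_r^{1,2}(\mathbb{R}^N)$; you should add that $t\mapsto f(t)\sqrt{1+2f(t)^2}$ is Lipschitz and vanishes at $0$ (equivalently $|h_n|\leq 2|u_n|\in L^{2^*}$), so that the standard result on composition with Lipschitz functions gives membership in $D^{1,2}$. With that remark included, your argument is complete; it trades the paper's elementary inequality manipulations for a cleaner algebraic cancellation at the cost of the extra admissibility check on the nonlinear test function.
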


\proof
We start with the following computation:
$$
I(u_n ) - \frac{1}{\theta} I'(u_n ) u_n = \left( \frac{1}{2}-\frac{1}{\theta}\right)  \int_{\mathbb{R}^{N}}| \nabla u_n |^2 dx + \frac{1}{2} \int_{\mathbb{R}^{N}}V(|x|) f(u_n )^2 dx -
\int_{\mathbb{R}^{N}}K(|x|) G(f (u_n )) dx $$
$$
-\frac{1}{\theta} \int_{\mathbb{R}^{N}}V(|x|) f (u_n )f'(u_n ) u_n dx + \frac{1}{\theta} \int_{\mathbb{R}^{N}}K(|x|) g(f (u_n ))f' (u_n ) u_n dx.
$$
Since {\it (5)} of Lemma \ref{change2} implies $f(t)^2 - f(t) f'(t) t \geq 0$ for all $t$, we have 
$$\int_{\mathbb{R}^{N}}V(|x|)\left(  f(u_n )^2- f (u_n )f'(u_n ) u_n \right) dx \geq 0$$
and this implies
$$
\frac{1}{2} \int_{\mathbb{R}^{N}}V(|x|) f(u_n )^2 dx -\frac{1}{\theta} \int_{\mathbb{R}^{N}}V(|x|) f (u_n )f'(u_n ) u_n dx =
 \left( \frac{1}{2}-\frac{1}{\theta}\right)  \int_{\mathbb{R}^{N}}V(|x|) f(u_n )^2 dx $$
$$+ \frac{1}{\theta}\int_{\mathbb{R}^{N}}V(|x|)\left(  f (u_n )^2- f (u_n )f'(u_n ) u_n \right) dx \geq  \left( \frac{1}{2}-\frac{1}{\theta}\right)  \int_{\mathbb{R}^{N}}V(|x|) f(u_n )^2 dx. 
$$
On the other hand, using the hypotheses on $g$ and {\it (5)} of Lemma \ref{change2} again, we have 
$$ \frac{1}{\theta} \int_{\mathbb{R}^{N}}K(|x|) g(f (u_n ))f' (u_n ) u_n dx -\int_{\mathbb{R}^{N}}K(|x|) G(f (u_n )) dx $$
$$\geq \
\frac{1}{\theta} \int_{\mathbb{R}^{N}}K(|x|) g(f (u_n ))f' (u_n ) u_n dx-  
\frac{1}{2\theta} \int_{\mathbb{R}^{N}}K(|x|) g(f (u_n ))f (u_n ) dx$$
$$\geq
\frac{1}{2\theta} \int_{\mathbb{R}^{N}}K(|x|) g(f (u_n ))f (u_n ) dx- \frac{1}{2\theta} \int_{\mathbb{R}^{N}}K(|x|) g(f (u_n ))f (u_n ) dx=0.
$$
Therefore we get
$$
I(u_n ) - \frac{1}{\theta} I'(u_n ) u_n \geq \left( \frac{1}{2}-\frac{1}{\theta}\right) \left( \int_{\mathbb{R}^{N}}| \nabla u_n |^2 dx +  \int_{\mathbb{R}^{N}}V(|x|) f(u_n ^2) dx \right).
$$
By definition, we have $\int_{\mathbb{R}^{N}}| \nabla u_n |^2 dx = ||u_n ||_{1,2}^2$ and $\int_{\mathbb{R}^{N}}V(|x|) f(u_n)^2 dx +1\geq ||u_n ||_o $. Hence, if $||u_n ||_{1,2} \leq 1$ we get
$$\int_{\mathbb{R}^{N}}| \nabla u_n |^2 dx + \int_{\mathbb{R}^{N}}V(|x|) f(u_n )^2 dx \geq  ||u_n ||_o -1 \geq ||u_n ||_o  +||u_n ||_{1,2} -2 = ||u_n || -2.$$
On the other hand, if  $||u_n ||_{1,2} > 1$ then $||u_n ||_{1,2}^2 > ||u_n ||_{1,2}$ and hence
$$\int_{\mathbb{R}^{N}}| \nabla u_n |^2 dx + \int_{\mathbb{R}^{N}}V(|x|) f (u_n )^2 dx \geq ||u_n ||_{1,2}+||u_n ||_{o} -1\geq ||u_n || -2.$$
So in any case we conclude 
$$
I(u_n ) - \frac{1}{\theta} I'(u_n ) u_n \geq \left( \frac{1}{2}-\frac{1}{\theta}\right)||u_n || -2\left( \frac{1}{2}-\frac{1}{\theta}\right).
$$
As $\left\{ u_n   \right\}_n $ is a Palais-Smale sequence, we can assume $I (u_n ) \leq C$ and we can fix $\delta >0$ such that 
$\delta < \theta  \left( \frac{1}{2}-\frac{1}{\theta}\right)$ and $\left| I' (u_n )u_n \right| \leq \delta ||u_n ||$ for large $n$'s. Hence we get 
$$C + \frac{\delta }{\theta}  ||u_n || \geq I(u_n ) - \frac{1}{\theta} I'(u_n ) u_n \geq \left( \frac{1}{2}-\frac{1}{\theta}\right)||u_n || -2\left( \frac{1}{2}-\frac{1}{\theta}\right),$$
that is,
$$C +2\left( \frac{1}{2}-\frac{1}{\theta}\right) \geq \left( \frac{1}{2}-\frac{1}{\theta}  - \frac{\delta }{\theta} \right)||u_n ||.$$
As $\frac{1}{2}-\frac{1}{\theta}  - \frac{\delta }{\theta} >0$, this implies that $\left\{ ||u_n ||  \right\}_n $ in bounded.
\endproof

\begin{lem}
\label{LE:MP4}
\label{LEM:PS}Under the assumptions of Theorem \ref{THM:ex}, the functional $
I:E\rightarrow \mathbb{R}$ satisfies the Palais-Smale condition.
\end{lem}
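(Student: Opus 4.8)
The plan is to show that every Palais--Smale sequence has a strongly convergent subsequence, the engine being the compact embedding $E\hookrightarrow\hookrightarrow L_{K}^{q_{1}}+L_{K}^{q_{2}}$, which the hypotheses of Theorem \ref{THM:ex} guarantee through Theorems \ref{THM0}, \ref{THM1} and Theorem \ref{THM(cpt)}(ii). Let $\{u_n\}$ be a Palais--Smale sequence. By Lemma \ref{LE:MP3} it is bounded in $E$, hence in $D_r^{1,2}(\mathbb{R}^N)$, so up to a subsequence $u_n\rightharpoonup u$ in $D_r^{1,2}(\mathbb{R}^N)$; by Lemma \ref{A6} the convergence is strong in $L^2(B_R\setminus\overline{B_r})$ for all $0<r<R$, whence (after a further subsequence) $u_n\to u$ a.e. Fatou's lemma then gives $\int_{\mathbb{R}^N}Vf(u)^2\,dx\le\liminf_n\int_{\mathbb{R}^N}Vf(u_n)^2\,dx<+\infty$, so $u\in E$. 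Finally, I would apply the argument in the proof of Theorem \ref{THM(cpt)}(ii) to $v_n:=u_n-u$: that proof uses only the boundedness of the sequence in $E$ and the vanishing of $\int_{B_R\setminus B_r}|v_n|^2\,dx$, both of which hold here, yielding $u_n\to u$ in $L_{K}^{q_{1}}+L_{K}^{q_{2}}$.

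Next I would prove that the limit is a critical point, $I'(u)=0$, which will later let me eliminate a delicate term. Since $C_{\mathrm{c},r}^{\infty}(\mathbb{R}^N)$ is dense in $E$, it suffices to check $I'(u)h=0$ for such $h$. For fixed $h$ one has $I'(u_n)h\to 0$ (as $I'(u_n)\to 0$ in $E'$), while $I'(u_n)h\to I'(u)h$ term by term: the gradient term converges by $u_n\rightharpoonup u$ in $D_r^{1,2}$; the term $\int_{\mathbb{R}^N}Kg(f(u_n))f'(u_n)h\,dx$ converges by the continuity results of \cite{BPR} together with $u_n\to u$ in $L_{K}^{q_{1}}+L_{K}^{q_{2}}$; and $\int_{\mathbb{R}^N}Vf(u_n)f'(u_n)h\,dx$ converges by dominated convergence, since $f(u_n)f'(u_n)\to f(u)f'(u)$ a.e. and, by property (9) of Lemma \ref{change2} and hypothesis $(\mathbf H)$, $|Vf(u_n)f'(u_n)h|\le\tfrac{1}{\sqrt2}V|h|\le\tfrac{C}{\sqrt2}|x|^{-2}|h|\in L^1(\mathrm{supp}\,h)$ for $N\ge3$.

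The core is then a monotonicity argument. As $I'(u_n)\to 0$ and $\{u_n-u\}$ is bounded, $I'(u_n)(u_n-u)\to 0$, while $I'(u)(u_n-u)=0$ since $I'(u)=0$. Subtracting,
\[
\int_{\mathbb{R}^N}|\nabla(u_n-u)|^2\,dx+\int_{\mathbb{R}^N}V\big[f(u_n)f'(u_n)-f(u)f'(u)\big](u_n-u)\,dx-R_n\longrightarrow 0,
\]
where $R_n:=\int_{\mathbb{R}^N}K\big[g(f(u_n))f'(u_n)-g(f(u))f'(u)\big](u_n-u)\,dx\to 0$, because $\{Kg(f(u_n))f'(u_n)\}$ is bounded in the dual space $L_K^{q_1'}\cap L_K^{q_2'}$ (by $\left({\bf g}_{q_1,q_2}\right)$ and the boundedness of $\{u_n\}$) and $u_n-u\to 0$ in $L_{K}^{q_{1}}+L_{K}^{q_{2}}$, and similarly for the fixed factor $Kg(f(u))f'(u)$. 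Both remaining integrands are nonnegative: the first trivially, the second because $t\mapsto f(t)f'(t)=\tfrac12(f^2)'(t)$ is nondecreasing by the convexity of $f^2$ (property (10) of Lemma \ref{change2}). Hence each integral converges to $0$; in particular $\|u_n-u\|_{1,2}\to 0$.

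It remains to prove $\|u_n-u\|_o\to 0$, for which, by Lemma \ref{lem:properties}(3) and the a.e.\ convergence, it suffices to show $\int_{\mathbb{R}^N}Vf(u_n)^2\,dx\to\int_{\mathbb{R}^N}Vf(u)^2\,dx$. I would use the convexity (Bregman) splitting
\[
\big[f(u_n)f'(u_n)-f(u)f'(u)\big](u_n-u)=\Big[\tfrac12f(u_n)^2-\tfrac12f(u)^2-f(u)f'(u)(u_n-u)\Big]+\Big[\tfrac12f(u)^2-\tfrac12f(u_n)^2-f(u_n)f'(u_n)(u-u_n)\Big],
\]
in which both brackets are nonnegative; since the $V$-weighted integral of the left-hand side tends to $0$, the $V$-integral of the first bracket does too, i.e.\ $\tfrac12\int Vf(u_n)^2-\tfrac12\int Vf(u)^2-\int Vf(u)f'(u)(u_n-u)\to 0$. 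The last term $\int_{\mathbb{R}^N}Vf(u)f'(u)(u_n-u)\,dx\to 0$ is read off from the identity $I'(u)(u_n-u)=0$, since the gradient term $\int\nabla u\cdot\nabla(u_n-u)\,dx$ and the $K$-term both vanish in the limit; thus $\int Vf(u_n)^2\to\int Vf(u)^2$, giving $\|u_n-u\|_o\to 0$ and hence $\|u_n-u\|\to 0$. The main obstacle is precisely this term $\int Vf(u)f'(u)(u_n-u)\,dx$: because $E$ is a (non-reflexive) Orlicz--Sobolev space and membership in $E$ does not control $\int Vu^2$, no naive weak-$L^2$ pairing is available, and it is the prior identity $I'(u)=0$, rather than any direct estimate, that disposes of it.
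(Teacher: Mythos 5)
Your proof is correct, but its decisive second half follows a genuinely different route from the paper's. The opening (boundedness via Lemma \ref{LE:MP3}, weak $D_r^{1,2}$-convergence, a.e.\ convergence, $u\in E$ by Fatou, and $u_n\to u$ in $L_K^{q_1}+L_K^{q_2}$ obtained by re-running the proof of Theorem \ref{THM(cpt)}(ii) on $v_n=u_n-u$, which indeed uses only boundedness in $E$ and the annulus convergence from Lemma \ref{A6}) matches the paper's setup, and your justification of the compactness step is actually more careful than the paper's one-line appeal to it. From there the paper argues \emph{globally}: since $f^2$ is convex, $J=I_1+I_2$ is a convex functional, so $J(u)-J(u_n)\geq J'(u_n)(u-u_n)=I'(u_n)(u-u_n)+I_3'(u_n)(u-u_n)\to 0$; combining this with weak lower semicontinuity of the gradient integral and Fatou's lemma, it squeezes out $\int V f(u_n)^2dx\to\int V f(u)^2dx$ and then $\int|\nabla u_n|^2dx\to\int|\nabla u|^2dx$ along further subsequences, finishing with Lemma \ref{lem:properties}(3) and the Radon--Riesz property in $D_r^{1,2}$. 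You instead run a pointwise monotonicity ($S_+$-type) scheme: prove $I'(u)=0$ first by density, pair $I'(u_n)-I'(u)$ with $u_n-u$, use nonnegativity of both the gradient term and the $V$-term (monotonicity of $t\mapsto f(t)f'(t)$) to conclude each vanishes, read off $\|u_n-u\|_{1,2}\to 0$ from the quadratic term, and recover $\int Vf(u_n)^2dx\to\int Vf(u)^2dx$ from the Bregman splitting plus $I'(u)(u_n-u)=0$. What the paper's route buys: it never needs $I'(u)=0$ inside the Palais--Smale verification, hence no pairing of $I'(u)$ against $u_n-u$ and none of the integrability worries you rightly flag; note that your splitting of $\int V[\cdot]\,dx$ into three pieces is legitimate only because Theorem \ref{THM:critical} guarantees that each piece (in particular $\int Vf(u)f'(u)(u_n-u)\,dx$, the $I_2'$-part of $I'(u)(u_n-u)$) is finite for $h=u_n-u\in E$ --- you use this implicitly and should say so explicitly. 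What your route buys: strong $D^{1,2}$-convergence falls out of the quadratic term with no lower-semicontinuity or subsequence juggling, the argument is the standard modular monotone-operator pattern, and it identifies the weak limit as a critical point en route. Two harmless glosses to repair: the domination $V|h|\leq C|x|^{-2}|h|$ from $(\mathbf{H})$ holds only on $B_1$, so on the rest of the compact support of $h$ you should invoke continuity (hence local boundedness) of $V$; and the boundedness of $Kg(f(u_n))f'(u_n)$ in $L_K^{q_1'}\cap L_K^{q_2'}$ is exactly what \cite{BPR} (Proposition 3.8, already invoked in Theorem \ref{THM:critical}) provides, so it is cleaner to phrase $R_n\to 0$ as $\left(I_3'(u_n)-I_3'(u)\right)(u_n-u)\to 0$, which is how the paper argues.
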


\proof

Let $\left\{ u_{n}\right\}_n $ be a sequence in $E$ such that $\left\{
I\left( u_{n}\right) \right\}_n $ is bounded and $I^{\prime }\left(
u_{n}\right) \rightarrow 0$ in $E^{\prime }$. By Lemma \ref{LE:MP3}, $\left\{ u_{n}\right\}_n $ is bounded in $E$ and therefore there exists a subsequence, that we still call $\left\{ u_{n}\right\}_n $, such that $u_n \rightharpoonup u$ in $D_r^{1,2} \left(\mathbb{R}^{N}\right) $ and $u_n (x) \rightarrow u(x)$ for a.e. $x$. Recall that we have introduced the three functionals $I_1 , I_2 , I_3$ (see Theorem \ref{THM:critical}) and we have defined $J= I_1 + I_2$, so that 
$I= J-I_3$. We know that $I_3$ is od class $C^1$ on $L_{K}^{q_{1}}+L_{K}^{q_{2}}$. By compactness of the embedding of $E$ into $L_{K}^{q_{1}}+L_{K}^{q_{2}}$, up to a subsequence we have that $u_n \rightarrow u$ in $L_{K}^{q_{1}}+L_{K}^{q_{2}}$, whence $I_{3}' (u_n ) \rightarrow I_{3}'(u)$ in the dual space of $L_{K}^{q_{1}}+L_{K}^{q_{2}}$ and $I_{3}' (u_n )(u-u_n )\rightarrow 0$ in $\mathbb{R}$.
We notice now that, as $f^2$ is a convex function, $J$ is a convex functional on $E$, so that
$$
J(u)-J(u_n ) \geq J'( u_n ) (u- u_n ) = I'( u_n ) (u- u_n ) + I_{3}'( u_n ) (u- u_n ).
$$
As $I'( u_n ) \rightarrow 0$ in $E'$ by hypothesis and $\left\{ u-u_{n}\right\} $ is bounded in $E$, we have $I'( u_n ) (u- u_n )\rightarrow 0$ and thus
$$
J(u) \geq J(u_n ) + o(1).
$$
Taking the liminf, this gives
\begin{equation}
\label{EQPS1}
\int_{\mathbb{R}^{N}}| \nabla u |^2 dx + \int_{\mathbb{R}^{N}}V(|x|) f(u)^2 dx \geq \liminf_n \left( \int_{\mathbb{R}^{N}}| \nabla u_n |^2 dx + \int_{\mathbb{R}^{N}}V(|x|) f(u_n )^2 dx \right) 
\end{equation}
$$\geq \liminf_n  \int_{\mathbb{R}^{N}}| \nabla u_n |^2 dx + \liminf_n \int_{\mathbb{R}^{N}}V(|x|) f(u_n )^2 dx .$$
By semicontinuity of the norm, we have
$$
\liminf_n \int_{\mathbb{R}^{N}}| \nabla u_n |^2 dx \geq \int_{\mathbb{R}^{N}}| \nabla u |^2 dx,
$$
so that (\ref{EQPS1}) gives
$$
\int_{\mathbb{R}^{N}}V(|x|) f(u )^2 dx \geq \liminf_n  \int_{\mathbb{R}^{N}}V(|x|) f(u_n )^2 dx.
$$
As Fatou's Lemma obviously implies $\int_{\mathbb{R}^{N}}V(|x|) f (u )^2 dx \leq \liminf_n  \int_{\mathbb{R}^{N}}V(|x|) f (u_n )^2 dx$, we deduce
\begin{equation}
\label{EQPS2}
\int_{\mathbb{R}^{N}}V(|x|) f (u )^2 dx = \liminf_n  \int_{\mathbb{R}^{N}}V(|x|) f (u_n )^2 dx.
\end{equation}
So, passing to a subsequence that we still label $\left\{ u_{n}\right\}_n $, we can assume
\begin{equation}
\label{EQPS3}
\int_{\mathbb{R}^{N}}V(|x|) f (u ) dx = \lim_n \int_{\mathbb{R}^{N}}V(|x|) f (u_n )^2 dx.
\end{equation}
\noindent Then by {\it (3)} of Lemma \ref{lem:properties} we get $||u- u_n ||_o \rightarrow 0$. Now, repeating the previous argument for this subsequence, we get again (\ref{EQPS1}), which now gives
$$
\int_{\mathbb{R}^{N}}| \nabla u |^2 dx \geq \liminf_n \int_{\mathbb{R}^{N}}| \nabla u_n |^2 dx 
$$
and hence 
$$
\int_{\mathbb{R}^{N}}| \nabla u |^2 dx = \liminf_n \int_{\mathbb{R}^{N}}| \nabla u_n |^2 dx . 
$$
Up to a subsequence again, we can assume
$$
\int_{\mathbb{R}^{N}}| \nabla u |^2 dx = \lim_n \int_{\mathbb{R}^{N}}| \nabla u_n |^2 dx . 
$$
Since $u_n \rightharpoonup u$ in $D_r^{1,2} \left(\mathbb{R}^{N}\right) $, we obtain that $u_n \rightarrow u$ in $D_r^{1,2} \left(\mathbb{R}^{N}\right) $, i.e., $||u- u_n ||_{1,2} \rightarrow 0$.
Hence $||u - u_n || = ||u- u_n ||_o + ||u- u_n ||_{1,2}\rightarrow 0$. 
\endproof


\proof[Proof of Theorem \ref{THM:ex}.] 
Taking $\rho$ and $v$ as in Lemmas \ref{LE:MP1} and \ref{LE:MP2}, we have $0=J(0) <\rho<J(v)$, so that  $v$ and $0$ are in two distinct 
connected components of $E\backslash S_{\rho}$. Hence, the previous lemmas show that all the hypoteses of Mountain Pass Lemma \ref{MPLemma} are satisfied, and thus we get a critical point $u$ of $I$, with $u \not= 0$. Let $u^{-}$ be the negative part of $u$. It is easy to see that $u^{-} \in E$, so that $I' (u) u^{-}=0$. The additional assumption $g(t)=0$ for $t<0$ implies 
\begin{equation}
\label{EQparteneg}
0= I^{\prime }\left( u\right) u^{-}=-\int_{\mathbb{R}^{N}}| \nabla u^{-} |^2 dx + \int_{\mathbb{R}^{N}}  V(|x|) f (u) f'(u) u^{-} dx,
\end{equation}
where, by the properties of $V$ and $f$, we have
$$
\int_{\mathbb{R}^{N}}  V(|x|) f (u) f'(u) u^{-} dx= \int_{\mathbb{R}^{N}}  V(|x|) f (-u^{-}) f'(u) u^{-} dx= -\int_{\mathbb{R}^{N}}  V(|x|) f (u^{-}) f'(u) u^{-} dx\leq 0.
$$
Hence (\ref{EQparteneg}) implies $\int_{\mathbb{R}^{N}}| \nabla u^{-} |^2 dx =0$. One concludes that $u^{-}=0$, because $u^{-}\in D_{r}^{1,2}\left(\mathbb{R}^{N} \right)$, and therefore $u$ is nonnegative.
\endproof

\section{Examples}\label{SEC:EX}

\noindent In this section we give some examples of application of our results, obtaining some existence results which are not included, as far as we know, in the previous literature. More precisely, we will make a comparison between our results and those of \cite{Uberlandio2}, which inspired the present study. In that paper the authors prove some existence results for equation (\ref{EQ}), assuming that $g$ grows like a power and that $V, K$ are controlled by suitable powers of $|x|$. Here we show some situations where the results of \cite{Uberlandio2} do not apply, while ours give existence of solutions. 

In all the examples, we will consider the model nonlinearity $g(t)= \min \{ t^{q_1 -1} , t^{q_2 -1} \}$ for simplicity, and we will let $4<q_1 \leq q_2$. 
As the throughout the paper, we will also assume $N\geq 3$ and hypothesis ${\bf (H)}$.

\begin{exa} \label{ex1}
Assume that there exist $c_1 ,c_2 , c_3 , c_4 >0$ such that
$$c_1 \, r^{2N}\leq K(r) \leq  c_2 \, r^{2N} \quad {\mbox as} \; r \rightarrow 0^+ , \quad \quad c_3 r^{3N} \leq  K(r) \leq \, c_4 r^{3N} \quad {\mbox as} \; r \rightarrow +\infty .$$
Computing the coefficients $b, b_0$ in \cite{Uberlandio2}, one gets $b_0=2N$ and $b\geq 3N$, so the results in \cite{Uberlandio2} cannot be applied, because they need $b_0\geq b$. If we let $\beta_0 = \beta_{\infty}=0$, $\alpha_0 = 2N$ and $\alpha_{\infty}=3N$ in Theorems \ref{THM0} and \ref{THM1}, we get 
$$ 
q_{0}^* (\alpha_0 , 0 )=2 \,\frac{\alpha_0 +N}{N-2}=  \frac{6N}{N-2} >6, \quad q_{\infty}^* (\alpha_{\infty},0)=2\, \frac{\alpha_{\infty} +N}{N-2}=  \frac{8N}{N-2}.
$$
Hence we can apply our existence results to nonlinearities $g(t)= \min \{ t^{q_1 -1} , t^{q_2 -1} \}$ with
$$4 < q_1 < \frac{6N}{N-2} <\frac{8N}{N-2}<q_2.$$
Notice that we are not assuming that $V$ has a power-like behavior at zero or at infinity (in this regard we just need hypothesis ${\bf (H)}$).

\end{exa}

\begin{exa} \label{ex2}
Let $N=3$ and assume that there exist $c_1 ,c_2 , c_3 , c_4 >0$ such that
$$\frac{c_1}{r^{1/2}}\leq K(r) \leq  \frac{c_2}{r^{1/2}} \quad {\mbox as} \; r \rightarrow 0^+ , \quad \quad  \frac{c_3}{r^{1/3}} \leq K(r) \leq \frac{c_4}{r^{1/3}} \quad {\mbox as} \; r \rightarrow +\infty .
$$
In this case the coefficients $b, b_0$ in \cite{Uberlandio2} are $b_0=- \frac{1}{2} $ and $b\geq -\frac{1}{3} $, so again $b>b_0$ and the results of 
\cite{Uberlandio2} cannot be applied. If we let $\beta_0 = \beta_{\infty}=0$, $\alpha_0 = - \frac{1}{2}$ and $\alpha_{\infty}=-\frac{1}{3}$ in Theorems \ref{THM0} and \ref{THM1}, we get 
$$ 
q_{0}^* (\alpha_0 , 0 )=2 \,\frac{\alpha_0 +3}{3-2}=  -1+6 =5, \quad q_{\infty}^* (\alpha_{\infty},0)=2\, \frac{\alpha_{\infty} +3}{3-2}=  \frac{16}{3}.
$$
Hence we can apply our existence results with
$$4 < q_1 < 5 <\frac{16}{3}<q_2 .$$
As for the previous example, the only assumption we need on the asymptotic behavior of $V$ is ${\bf (H)}$.

\end{exa} 

\begin{exa} \label{ex3}

Assume that there exist $c, \delta >0$ such that $V(r) \leq c \, e^{-\delta r}$ as $r \rightarrow +\infty$. The results of \cite{Uberlandio2} cannot be applied because they require $\liminf_{r \rightarrow \infty} \frac{V(r)}{r^a}>0$ for some $a \in \mathbb{R}$. Instead, we can give several existence results with different hypotheses on $K$. For example, assume $K(r)= r^N$. Then we take $\beta_0 =\beta_{\infty}=0$ and $\alpha_0 = \alpha_{\infty}=N$ in Theorems \ref{THM0} and \ref{THM1}, and we get 
$$
q_{0}^* (\alpha_0 , 0 )=q_{\infty}^* (\alpha_{\infty},0)=\frac{4N}{N-2}>4.
$$
Hence we get an existence result by choosing
$$
4<q_1 <\frac{4N}{N-2} < q_2 .
$$
\noindent Assume now $K(r)= \min \left\{  r^N , r^{2N}     \right\}$ and choose $\beta_0 =\beta_{\infty}=0$, $\alpha_0 =2N$ and $ \alpha_{\infty}=N$ 
in Theorems \ref{THM0} and \ref{THM1}. We get
$$
q_{0}^* (\alpha_0 , 0 )=\frac{6N}{N-2}, \quad q_{\infty}^* (\alpha_{\infty},0)=\frac{4N}{N-2}>4
$$
where $ q_{0}^* (\alpha_0 , 0 )> q_{\infty}^* (\alpha_{\infty},0) $, so that we can choose $q_1 = q_2 =q$ and get existence of solutions for power nonlinearities 
$g(t) = \min \{ t^{q_1 -1} , t^{q_2 -1} \}= t^{q-1}$ with
$$
4< \frac{4N}{N-2}<q<\frac{6N}{N-2}.
$$ 
\end{exa}

\begin{exa} \label{ex4}
Assume that there exist $c , \delta >0$ such that $K(r) \geq c \, e^{\delta r}$ as $r \rightarrow +\infty$. The results in \cite{Uberlandio2} cannot be applied because they require $\limsup_{r \rightarrow \infty} \frac{K(r)}{r^b} < +\infty$ for some $b \in R$. To give an explicit example, assume $K(r)= r^N  e^r$ and $V(r)= e^{2r}$. Then we take $\beta_0 =0$, $\beta_{\infty}=1/2$ and $\alpha_0 = \alpha_{\infty}=N$  in Theorems \ref{THM0} and \ref{THM1}, and we get 
$$
q_{0}^* (\alpha_0 , 0 )=\frac{4N}{N-2}> q_{\infty}^* (\alpha_{\infty},1/2)=2\, \frac{2N-1}{N-2}>4.
$$
So we can choose $q_1 = q_2 =q $ and this gives existence results for$g(t) = t^{q-1}$ with
$$
4 < 2\, \frac{2N-1}{N-2}<q<\frac{4N}{N-2}.
$$
\end{exa}

\begin{exa} \label{ex5}
Assume that $K(r) = o(r^N )$ for all $N$, as $r\rightarrow 0^+$. For example, $K(r) = c e^{-\delta /r}$ for $r$ near zero, with $c, \delta >0$. As before, the results of \cite{Uberlandio2} cannot be applied because they need a power-like behavior of $K$ near zero. Assume also that, for $r \rightarrow + \infty$, it holds $K(r) = r^{\alpha} V(r)$ for some $\alpha \in \mathbb{R}$. Notice that this does not require any specific asymptotic behavior at $\infty$ for $V$ and $K$, and the only hypothesis on the behavior of $V$ at $0$ is, again,  ${\bf (H)}$. Fix $\alpha_{\infty}= \alpha$ and $\beta_{\infty}=1$ in Theorem \ref{THM0}. Hence
$$
q_{\infty}^* (\alpha_{\infty},1)=2\, \frac{\alpha + N-2}{N-2}  = \frac{2\alpha }{N-2}+ 2.
$$
and we can choose $q_2 >\max \left\{4,   \frac{2\alpha }{N-2}+ 2  \right\}$. Once we have fixed such a $q_2$, we let $\beta_0 =0$ and $\alpha_0$ such that $2 \, \frac{\alpha_0 +N}{N-2}> q_2$. This means $q_{0}^* (\alpha_0 , 0 )>q_2$ in Theorem \ref{THM1}, so that we can take $q=q_1 =q_2$ and get an existence result with $g(t) = t^{q-1}$ for any $q >\max \left\{4,   \frac{2\alpha }{N-2}+ 2  \right\}$. 
As another example of the same kind, assume $K(r) =  e^{-1 /r}$ and 
$V(r)= 1/r^2$ for all $r>0$. It is easy to see that the best choice in Theorem \ref{THM1} is $\alpha_{\infty}= \beta_{\infty}=0$, which gives 
$$
q_{\infty}^* (\alpha_{\infty},\beta_{\infty})= \frac{ 2N}{N-2}  .
$$
As before, for any fixed $q_2 >\max \left\{4,   \frac{2N }{N-2}  \right\}$ we can let $\beta_0 =0$ and $\alpha_0$ large enough in such a way that $q_{0}^* (\alpha_0 , 0 )>q_2$, so that we can take $q_1 = q_2 =q$. Hence we get a solution for $g(t) = t^{q-1}$ with any $q >\max \left\{4,   \frac{2N }{N-2}  \right\}$. Notice that this means $q>6$ for $N=3$ and $q>4$ for $N\geq 4$.

\end{exa}

\begin{exa} \label{ex6}
Let $V(r)= 1/r^2$ for all $r>0$, and assume
$K(r)= c r^N$ for $r$ near zero ($c>0$) and  $K(r) \leq C$ for $r \rightarrow + \infty$. For example $K(r) = \min \left\{ r^N , 1 \right\}$. 
Hence the coefficients $a_0 ,b_0$ of \cite{Uberlandio2} are given by $b_0 = N$ and $a_0 = -2$, and the results of \cite{Uberlandio2} cannot be applied because they need $a_0 \geq b_0$. 
We fix $\alpha_0 = N$ and $\beta_0 = 0= \alpha_{\infty}=  \beta_{\infty}$  in Theorems \ref{THM0} and \ref{THM1}, so that
$$
q_{0}^* (\alpha_0 , \beta_0 )= q_{0}^* (N, 0 ) = \frac{4N}{N-2}>4 , \quad  q_{\infty}^* (\alpha_{\infty},\beta_{\infty})=
q_{\infty}^* (0,0)= \frac{2N}{N-2}.
$$
Hence we can take $q=q_1 = q_2$ and get existence results for power nonlinearities $g(t) = t^{q-1}$ with $q \in \left(4, \frac{4N}{N-2} \right)$ if $N\geq 4$, and $q \in (6, 12)$ if $N=3$.

\end{exa}

\begin{rem}
 As a final remark, we observe that most of existence results we can formulate for explicit potentials concern potentials $K$'s decaying fast enough as $r\rightarrow 0$. This is the major limitation of our work. Nevertheless we believe that it might be overcome by a careful analysis of our estimates, and we hope to do this in a future paper.
\end{rem}

\end{document}